\newcommand{\len}{\operatorname{length}}
\newcommand{\diam}{\operatorname{diam}}
\newcommand{\cE}{\mathcal{E}}
\newcommand{\sign}{\text{sign}}
\newcommand{\RR}{\mathbb R}
\newcommand{\NN}{\mathbb N}
\newcommand{\cB}{\mathcal{B}}
\newcommand{\cH}{\mathcal{H}}
\newcommand{\eps}{\varepsilon}
\newcommand{\cof}{\operatorname{cof}}
\newcommand{\mbf}[1]{\boldsymbol{#1}}
\newcommand{\To}{\longrightarrow}
\newcommand{\abs}[1]{\left\vert#1\right\vert}
\newcommand{\absn}[1]{\vert#1\vert}
\newcommand{\absb}[1]{\big\vert#1\big\vert}
\newcommand{\norm}[1]{\left\|#1\right\|}
\newcommand{\dist}[2] {\operatorname{dist}\left(#1;#2\right)}
\newcommand{\mysetr}[2] {\left\{#1\,\left|\,#2\right.\right\}}
\newcommand{\mysetl}[2] {\left\{\left.#1\,\right|\,#2\right\}}
\newcommand{\be}{\begin{equation}}
\newcommand{\ee}{\end{equation}}
\newcommand{\bald}{\begin{aligned}}
\newcommand{\eald}{\end{aligned}}
\newtheorem{theorem}{Theorem}
\newtheorem{thm}[theorem]{Theorem}
\newtheorem{cor}[theorem]{Corollary}
\newtheorem{lem}[theorem]{Lemma}
\newtheorem{prop}[theorem]{Proposition}
\theoremstyle{definition}
\theoremstyle{remark}
\newtheorem{rem}[theorem]{Remark}
\renewcommand{\proofname}{\bfseries{Proof}}
\renewenvironment{proof}[1][\proofname]{\par
  \normalfont
  \trivlist
  \item[\hskip\labelsep\itshape
    \bfseries{#1.}]\ignorespaces
}{%
  \qed\endtrivlist
}
\numberwithin{equation}{section}
\begin{document}

\title[Global injectivity and approximation]{Global injectivity in second-gradient Nonlinear Elasticity and its approximation with penalty terms}
\
\author{Stefan Kr\"omer} \address{Stefan Kr\"omer, The Czech Academy of Sciences, Institute of Information Theory and Automation, Pod vod\'{a}renskou v\v{e}\v{z}\'{\i}~4, 182~08~Praha~8, Czech Republic (corresponding author), \email{}{skroemer@utia.cas.cz}}
\author{Jan Valdman} \address{Jan Valdman, The Czech Academy of Sciences, Institute of Information Theory and Automation, Pod vod\'{a}renskou v\v{e}\v{z}\'{\i}~4, 182~08~Praha~8, Czech Republic, \email{}{valdman@utia.cas.cz}}
\date{\today}

\begin{abstract}
We present a new penalty term approximating the Ciarlet-Ne\v{c}as condition (global invertibility of deformations) as a soft constraint for hyperelastic materials.
For non-simple materials including a suitable higher order term in the elastic energy, we prove that 
the penalized functionals converge to the original functional subject to the Ciarlet-Ne\v{c}as condition.
Moreover, the penalization can be chosen in such a way that for all low energy deformations, self-interpenetration is completely avoided already at all sufficiently small finite values of the penalization parameter.
We also present numerical experiments in 2d illustrating our theoretical results.
\end{abstract}

\maketitle

\subjclass{}
\keywords{Nonlinear elasticity, local injectivity, global injectivity, nonsimple materials, Ciarlet-Necas-condition, approximation}

\maketitle


\section{Introduction\label{sec:intro}}

Nonlinear elasticity models the behavior of a solid body subject to relatively strong external forces causing large deformations, albeit not large enough to cause irreversible damage. For a general introduction to the topic, we refer to \cite{Sil97B,Cia88B,Ba83a}.
It is clear that the deformation of such a body, the map $y:\Omega\to \RR^d$ mapping
the ``reference configuration'' $\Omega\subset \RR^d$ to a deformed state. Here, typically $d=3$ and $\Omega$ is the domain occupied by the elastic body in its stress-free state before external forces are applied. Clearly, any realistic deformation should always be injective, i.e., the body should not interpenetrate itself in any way. 
\begin{figure}
        \centering
        \includegraphics[width=0.6\textwidth]{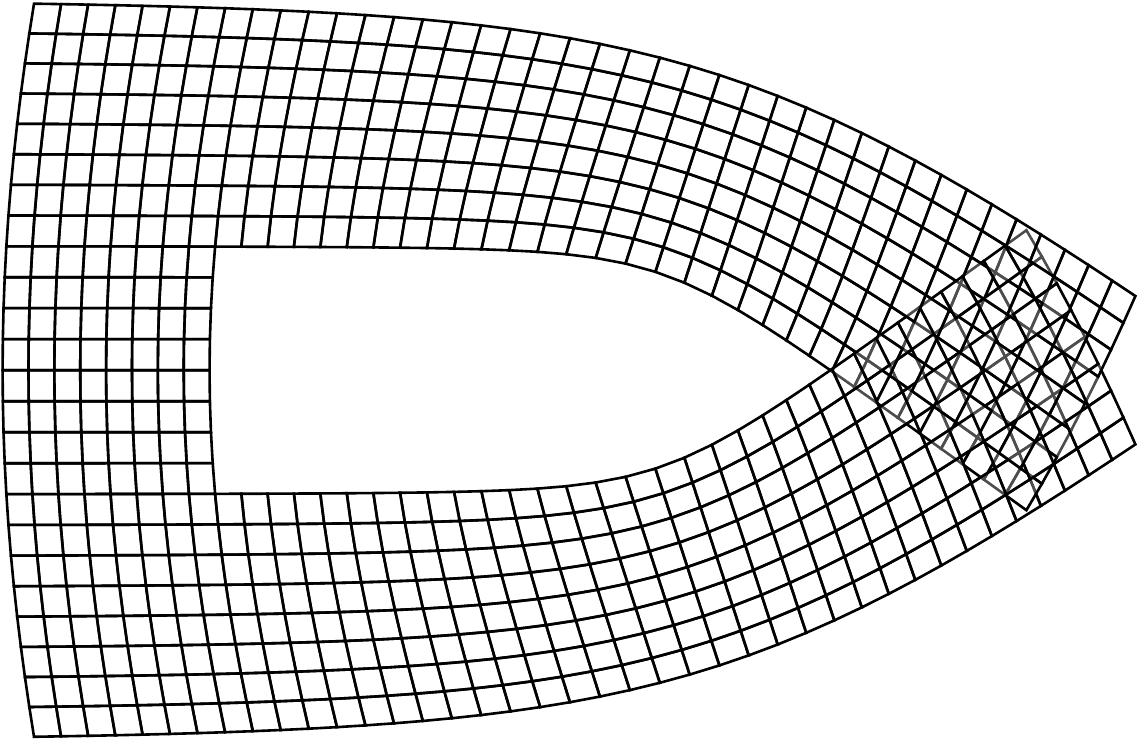}		
    \caption{Example of deformed domain with overlapping ends.}
		\label{fig:pincers_geometry_deformed_stronger}
\end{figure}
Here, we focus on the framework of hyperelasticity, that is, models where the body does not dissipate energy when deformed, instead simply storing in an internal elastic energy whose mathematical description as a functional depending on $y$ fully determines its response to external forces. 
The existence of energy minimizers in this framework was pioneered by John Ball \cite{Ba77a}. 
This theory allows us to enforce a weak form of local injectivity of the deformation, i.e., $\det \nabla y>0$ a.e.~in $\Omega$ for all deformation with finite internal energy, by using an elastic energy density which becomes infinite as $\det \nabla y\to 0^+$. However, having $\det \nabla y>0$ a.e.~is still not enough to ensure local injectivity everywhere, and globally, a loss of injectivity by, say, two different ends of the body overlapping each other (see Figure
\ref{fig:pincers_geometry_deformed_stronger}), is not automatically prevented \cite{Ba81a}. For instance, in addition to the positive determinant, to derive local (and global) injectivity, the result of \cite{Ba81a} requires global injectivity of $y$ on $\partial\Omega$ as a prerequisite, a feature hard to prove and difficult to handle as a constraint (unless it is already given in form of a Dirichlet condition). 
On the other hand, the positive determinant implies local injectivity in a neigborhood of almost every point in $\Omega$ given quite natural assumptions on the regularity of the deformation \cite{FoGa95a}. 
A direct approach to local injectivity \emph{everywhere} 
was provided in \cite{HeaKroe09a}, where a uniform positive lower bound on $\det \nabla u$ is derived, but this only works 
in the framework of so-called non-simple materials, where the internal elastic energy is assumed to contain a suitable term involving second order derivatives $\nabla^2 u$ (or higher order, and the coercivity conditions in \cite{HeaKroe09a} automatically entail $y\in C^1$ by embedding), as opposed to classical hypereleasticity for which the energy only depends on $\nabla y$. 
For further information on the topic of locally injective deformations, we also refer to \cite{Ba02a,FoGa95B}. In addition, there is literature available discussing settings that allow for cavitation (see, e.g., \cite{HeMo15a} and the references therein), but we will rule that out by assumption in this article (coercivity in $W^{1,p}$ with $p>d$).
The standard constraint nowadays used to ensure global injectivity of the deformation is the Ciarlet-Ne\v{c}as condition \cite{CiaNe87a}:
\begin{align}\label{ciarletnecas}
\int_\Omega \det(\nabla y)\,dx = \abs{y(\Omega)}.
\end{align}
The numerical treatment of this condition is not well understood, however. In particular, to our knowledge there is so far no example where \eqref{ciarletnecas} is enforced as a hard constraint on the discrete level in a numerical scheme with provable convergence.

Artificially including terms with derivatives of second or higher order in the energy, possibly multiplied with a small parameter, can also serve as regularization, and among other thing, the results of \cite{HeaKroe09a} can then be used to obtain local injectivity.
However, with such a regularization, there is a risk of a Lavrentiev phenomenon occuring, that is, the minimal energy of the regularized problem might remain strictly above the minimal energy for the original problem without the higher order term, even if the regularization parameter converges to zero. In particular, it is known that the infimum of the internal energy in spaces with higher integrability of $\nabla y$ may be too large \cite{FoHruMi03a}. Similar issues might occur when discretizing, as the typical finite element spaces are all subsets of $W^{1,\infty}$. For simple materials, i.e., models without higher order terms, a way out was shown in \cite{Ne90a}, using an artificially introduced auxiliary field, and
similarly in \cite{MieRou16a}\footnote{also for more complicated models including plasticity}. For the variant with a higher order term also treated in \cite{MieRou16a},
a Lavrentiev phenomenon cannot be ruled out.

In this article, we further elaborate on the approach of 
\cite{MieRou16a} in presence of a higher order term
\begin{align}\label{secondgradient0}
	\sigma \int_\Omega \abs{D^2 y}^s \,dx\quad \text{($\sigma>0$ is a fixed parameter)}
\end{align}
using soft constraints, i.e., everywhere finite terms in the energy depending on control parameters $\eps_i$, converging to the singular determinant term and the Ciarlet-Ne\v{c}as condition \eqref{ciarletnecas}, respectively, as $\eps_i\to 0$, $i=1,2$. In particular, 
the penalization term for the latter in \cite{MieRou16a} is defined as
\begin{align}\label{ciarletnecassoft}
	\frac{1}{\eps_2} \left(\int_\Omega \det(\nabla y)\,dx -\abs{y(\Omega)}\right).
\end{align}
After a discretizing with mesh size $h$, it is shown in \cite{MieRou16a} that 
the energy minima converge 
for these approximations (the energies even $\Gamma$-converge)
as $(\eps_1,\eps_2,h)\to (0,0,0)$ in the scaling regime $\frac{h}{\eps_1}\to 0$. 
They also show that there is also convergence even for $\sigma=0$, but then
the suitable scaling regime, while shown to exists, is not explicitly known and therefore effectively impossible to exploit in practice.

How to actually compute \eqref{ciarletnecassoft} is not explained in \cite{MieRou16a}, however, and since this term is nonlocal and, in particular, not a standard integral functional, this can in fact be quite problematic to implement. This is further aggravated by the presence of a higher order term like \eqref{secondgradient0} which rules out the (direct) use of piecewise affine finite elements. An alternative, more accessible penalization was recently proposed and studied in \cite{BaRei18aP}, but only for beams (effectively 1d).
Here, after precisely outlining the model we work with in Section~\ref{sec:model}, 
we show that instead of 
\eqref{ciarletnecassoft}, alternatives in the form of double integrals can be used (Section~\ref{sec:cn}), for example 
\begin{align}\label{sCN1ex}
\bald
	&\frac{1}{\eps_2}
 \int_{(\Omega\times \Omega)} 
	\frac{1}{\eps_2^d} \left[\abs{\tilde{x}-x}-\eps_2^{-1}\abs{y(\tilde{x})-y(x)}\right]^+
	\,d(x,\tilde{x}).
\eald
\end{align}
A more general class of such terms is defined in \eqref{sCN1}.
We show that such terms also lead to a limiting constraint equivalent to the standard Ciarlet-Ne\v{c}as condition (Theorem~\ref{thm:cnsoft}) 
while the energy minima converge as before, at least for the regularized problem with fixed $\sigma>0$  (Theorem~\ref{thm:convergence}).
This new kind of soft constraint 
makes discrete computations easier and can even be handled by standard packages (although inefficiently). 
Moreover, as we will see below, we have enough freedom to choose something going along well with particular features of the finite elements used, or 
to enforce other physically desirable properties like global injectivity even on the discrete level, and not just in the limit (Corollary~\ref{cor:invertibility}). 
Our theoretical results are complemented by numerical computations carried out for two example problems in 2d, presented in Section~\ref{sec:numerics}.

\section{The model and structural assumptions\label{sec:model}}

\subsection{The elastic energy}
As it is standard, we assume that for a deformation $y\in W^{1,p}(\Omega;\RR^d)$, $p>d$, 
the elastic energy has the form
\begin{align*}
	E^{el}(y):=\int_\Omega W(x,\nabla y)\,dx,~~\text{$y$ satisfies \eqref{ciarletnecas},}
\end{align*}
where 
\begin{align}\label{W0}
	W:\Omega\times \RR^{d\times d}\to \RR\cup \{+\infty\}~~\text{is a Carath\'eodory function,}
\end{align}
i.e., $W(x,F)$ is measurable in $x$ and continuous in $F$. Moreover, for all $F\in \RR^{d\times d}$ and all $x\in \Omega$,
\begin{align}\label{W1}
\begin{alignedat}{2}
	W(x,F)&~=~+\infty && \quad\text{if $\det F\leq 0$,}\\
	W(x,F)&~\geq~ c_1\left(\abs{F}^p+(\det F)^{-q}\right)-c_2 && \quad\text{if $\det F>0$,}
\end{alignedat}
\end{align}
with constants $q>d$ (which is necessary for \eqref{pqsd} below), $c_1>0$ and $c_2\geq 0$. 
In addition, we assume that $f$ is polyconvex, i.e., 
\begin{align}\label{W2}
\bald
  W(x,F)=h(x,m(F)),~~&\text{with a function $h$ such that}\\
	&\text{$h(x,\cdot)$ is convex for each $x$,}
\eald
\end{align}
where $m(F)\in \RR^{n(d)}$, $n(d)=\sum_{k=1}^{d} \binom{d}{k}^2$, denotes the collection of all minors of $F$, i.e., all $k\times k$ sub-determinants with $1\leq k\leq d$. 
For instance, $m(F)=(F,\det F)\in \RR^{5}$ for $d=2$ and $m(F)=(F,\cof F, \det F)\in \RR^{19}$ for $d=3$. Here, for any $d$,
$\cof F\in \RR^{d\times d}$ denotes cofactor matrix so that
$F^{-1}=(\cof F)^T (\det F)^{-1}$ whenever $F$ is invertible. 
\begin{rem}
Due to \cite{Ba77a,CiaNe87a} (for a proof of a related lower semicontinuity property of the terms in the Ciarlet-Ne\v{c}as condition also see \cite{MieRou16a}), with the assumptions \eqref{W0}--\eqref{W2}, $E^{el}$ always has a minimizer $y^*$ in $W^{1,p}(\Omega;\RR^d)$. Like all states with finite energy, it must satisfy 
$$\det \nabla y^*>0 \quad \mbox{a.e.~in~}  \Omega.$$
\end{rem}
\subsection{Approximation including penalization and higher order terms}
Our regularized approximation of $E^{el}$ is defined as follows: 
\begin{align*}
	E_{\eps,\sigma}(y):=E^{el}_{\eps_1}(y)+E^{CN}_{\eps_2}(y)+E^{reg}_{\sigma}(y),\quad\eps=(\eps_1,\eps_2).
\end{align*}
Here, the elastic energy reads
\[
	E^{el}_{\eps_1}(y):=\int_\Omega W_{\eps_1}(x,\nabla y)\,dx
\]
where $W_{\eps_1}:\Omega\times \RR^{d\times d}\to \RR$ can be any everywhere finite approximation of $W$
such that 
\begin{align}\label{Weps}
\bald
&\begin{alignedat}{2}
	&\text{$W_{\eps_1}$ is a Carath\'eodory function and polyconvex;}&&
\end{alignedat}	\\
&\begin{aligned}
	&c_3(\abs{F}^p+\max\{\eps_1,\det F\}^{-q})-c_4\\
	&\qquad \qquad \leq W_{\eps_1}(x,F) \leq W(x,F)+\eps_1 \quad  \text{for all $F\in \RR^{d\times d}$;}\\
	&W_{\eps_1}(x,F)\geq W(x,F)-\eps_1 \quad\quad  \text{if $\abs{F}\leq \frac{1}{\eps_1}$ and $\det(F)\geq \eps_1$}.
\end{aligned}
\eald
\end{align}
with constants $c_3>0$, $c_4\geq 0$.
The term $E^{CN}_{\eps_2}(y)$ represents a penalization term for the Ciarlet-Ne\v{c}as condition 
which we will discuss in detail in the next section.
As the final piece of the energy, we added the higher order term
\begin{align*}
	E^{reg}_{\sigma}(y):=\sigma\int_\Omega \abs{D^2 y}^s \,dx
\end{align*}
with some $s>1$. Primarily, we intend to study the limit $\eps\to 0$ here, 
with $\sigma>0$ fixed. The limit $\sigma\to 0$ 
would be interesting, too, but seems out of reach a the moment.

We will always work with $q,s$ admissible for the results of \cite{HeaKroe09a}, which further restricts these exponents. 
Altogether, our assumptions on the exponents can be summarized as
\begin{align}\label{pqsd}
	p>d, \quad s>d, \quad q>\frac{sd}{s-d}.
\end{align}

\begin{rem}
One possible choice for $W_{\eps_1}$ can always be obtained by replacing $h(x,\cdot)$ in 
$\eqref{W2}$ for each $x$ by the Yosida-type approximation 
\[
	h_{\eps_1}(x,A):=\min\mysetl{h(x,\tilde{A})+\frac{1}{\zeta(\eps_1)}\absb{A-\tilde{A}}^p}{
	\tilde{A} \in \RR^{n(d)}
	}
\]
with $\zeta(\eps_1)>0$ chosen small enough to obtain \eqref{Weps}; $\zeta(\eps_1)\to 0$ as $\eps_1\to 0$.
Of course, in many special cases of $h$, fully explicit approximations are possible, too.
\end{rem}
\begin{rem}
Above, we omitted force terms in the energy, although only to keep the notation short. 
Similarly, boundary conditions are missing so far. 
These issues are discussed in greater detail in Remark~\ref{rem:forces} and Remark~\ref{rem:bc} below.
\end{rem}
\begin{rem}
More general forms of
$E^{reg}_{\sigma}$ can be used as well if this is desired for modelling purposes.
The only features we actually exploit are that with $\sigma>0$ and $s$ as above, 
\begin{enumerate}
\item[(i)] $E^{reg}_{\sigma}(y)\geq \sigma\int_\Omega \abs{D^2 y}^s \,dx$,
\item[(ii)] $E^{reg}_{\sigma}(y)$ is uniformly continuous on bounded subsets of $W^{2,s}$ (cf.~Proposition~\ref{prop:SGucont}), and
\item[(iii)] $y\mapsto E^{reg}_{\sigma}(y)$, $W^{2,s}(\Omega;\RR^d)\to \RR\cup\{+\infty\}$ is sequentially lower semicontinuous with respect to weak convergence in $W^{2,s}$.
\end{enumerate}
Moreover, (i) can be further weakened to $E^{reg}_{\sigma}(y)\geq \sigma\int_\Omega \abs{D^2 y}^s \,dx-C$ with some constant $C$ which can easily be absorbed by other terms.
For (iii), it suffices to have $E^{reg}_{\sigma}(y)$ as an integral functional depending on $D^2 y$ with a convex, polyconvex or gradient polyconvex energy density. The notion of gradient polyconvexity and related results can be found in \cite{BeKruSchloe17Pa}.
\end{rem}

\section{Variants of the Ciarlet-Ne\v{c}as condition and new penalization terms\label{sec:cn}}

Our starting point for obtaining a new kind of penalization terms
is the observation that there are many equivalent ways of stating the Ciarlet-Ne\v{c}as condition \eqref{ciarletnecas}. 
For instance, if $y$ is regular enough such that the coarea formula holds, in particular for $y\in W^{1,p}$ with $p>d$ \cite{MaSwaZie03a}, 
\eqref{ciarletnecas} is equivalent to
\begin{align}\label{ciarletnecas2}
\int_{y(\Omega)} \big(N_y(z)-1\big)\,dz~=~ 0,
\end{align}
where 
\[
  N_y(z):=\#\mysetr{\tilde{x}\in \Omega}{y(\tilde{x})=z}
\]
counts the number of times $y$ (its continuous representative) reaches the point $z$ in the deformed configuration. There is self-contact at $z$ if and only if $N_y(z)>1$. 
%
Yet another equivalent way of expressing \eqref{ciarletnecas} is
\begin{align}\label{ciarletnecas3}
\int_{\Omega\cap \{x\mid N_y(y(x))>1\}} h(x) \,dx ~=~0,
\end{align}
where $h$ can be any measurable function with $h>0$ a.e.~in $\{N_y\circ y>1\}$. Notice that the choice of such a function $h$ does not matter, since \eqref{ciarletnecas3} effectively just states that $\{N_y\circ y>1\}\subset \Omega$ is a set of measure zero, and by choosing 
$h(x)=(N_y(y(x)))^{-1}(N_y(y(x))-1)$, 
\eqref{ciarletnecas3} reduces to \eqref{ciarletnecas2} by the coarea formula.

We now introduce a new class of penalization terms $E^{CN}_{\eps_2}(y)$ that -- as we will see later --
lead to a condition of the form of \eqref{ciarletnecas3} in the limit as $\eps_2\to 0$. It is defined as follows:
\begin{align}\label{sCN1}
\bald
	&E^{CN}_{\eps_2}(y):=\\
	&\qquad \frac{1}{\eps_2^\beta}
 \int_{(\Omega\times \Omega)} 
	\frac{1}{\eps_2^d} \left[g(\abs{\tilde{x}-x})-g\Big(\frac{1}{\eps_2}\abs{y(\tilde{x})-y(x)}\Big)\right]^+
	\,d(x,\tilde{x}),
\eald
\end{align}
where $[a]^+:=\max\{0,a\}$ denotes the positive part, $\beta>0$ is a constant and
\begin{align}\label{sCN1b}
\bald
	&g:[0,\infty)\to [0,\infty)~~\text{is a continuous,}\\
	&\text{strictly increasing function with $g(0)=0$.}
\eald
\end{align}
The choice of $\beta$ and $g$ is meant to give us some freedom to optimize the behavior of numerical schemes, with prototypical examples for $g$ being $g(t):=t$ or $g(t)=t^2$.

\begin{rem}\label{rem:aura}
The way $E^{CN}_{\eps_2}$ is defined, 
its integrand only contributes in 
$O(\eps_2)$-neighborhoods of the self-contact (or self-penetration) set.
More precisely, this ``aura'' 
never goes beyond a distance of
$\diam(\Omega)\operatorname{Lip}(y^{-1})\eps_2$
away from the self-contact set. 
Here, $\operatorname{Lip}(y^{-1})$ is a Lipschitz constant of the local inverse $y^{-1}$ (which is globally uniform, cf.~
 Lemma~\ref{lemIMT-biLi} below). 
When computing the integral by numerical integration, this also means that
a mesh size $h$ of this order is needed, at least near self-penetration. 
Otherwise, huge errors are likely.

In the example illustrated in Figure \ref{fig:1-4}, the radius of the aura beyond the self-contact is roughly $\eps_2$. In that particular case, 
$\operatorname{Lip}(y^{-1})$ is still quite close to $1$, and 
instead of $\diam(\Omega)$ in the estimate mentioned above, we may actually also use a number close to $1$, namely, the distance of the 
two disjoint subsets of the reference configuration (undeformed domain) where the self-overlap happens (for which $\diam(\Omega)$ is of course an upper bound).
\end{rem}


\begin{rem}
By default, all finite dimensional norms $\abs{\cdot}$ appearing in this article are assumed to be Euclidean. However,
that choice does not really matter. For instance, using a different norm inside of $g$
 in \eqref{sCN1} is possible. The proofs below are only affected insofar as all balls or annuli in
$\RR^d$ or their intersections with $\Omega$ have to be interpreted as balls (or annuli) with respect to that norm. 
Additional constants will then appear in Cauchy-Schwarz type inequalities, but that only changes the constants appearing in the results, not their general structure.
In particular, for discretizations with finite elements defined on cubes, it can be quite convenient to use $\abs{x}_\infty=\max \abs{x_i}$, $x=(x_1,\ldots,x_d)^T\in \RR^d$, instead of the Euclidean norm.
\end{rem}

\subsection{Illustration example: the penalization term for a prescribed deformation}\label{subs:example1}
All pictures of this example are displayed in Figure \ref{fig:1-4}. We assume a ``pincers'' domain $\Omega \in \RR^2$ covered in the rectangle $(-3,2) \times (-1.5, 1.5)$. Using rotated polar coordinates 
$$r=\sqrt{x_1^2+x_2^2}, \quad  t=\arctan(-x_2,-x_1),$$       
where $x=(x_1,x_2) \in \Omega$ we define the deformation in the form
$$ y(r,t)=-r (\cos(a \, t), \sin(a \, t))$$
for some parameter $a>1$. For a sufficiently high value of $a$ (here we choose $a=1.1$) both pincers parts interpenetrate and the marginal density
\begin{align*}
\bald
	d^{CN}_{\eps_2,y}(x):=
	&\frac{1}{\eps_2^\beta}
 \int_{\Omega} 
	\frac{1}{\eps_2^d} \left[g(\abs{\tilde{x}-x})-g\Big(\frac{1}{\eps_2}\abs{y(\tilde{x})-y(x)}\Big)\right]^+
	\,d\tilde{x}
\eald
\end{align*}
of $E^{CN}_{\eps_2}$
is evaluated and visualized for $\eps_2=1/2$ and $\eps_2=1/4$. In both cases we consider $\beta=1/2$ and $g(t):=t$. 
Marginal densities are 
evaluated on rectangular elements by the method of finite elements. Details on implementation are provided in Section~\ref{sec:numerics}.

\begin{figure}[htb]
    \begin{minipage}[t]{.45\textwidth}
        \centering
        \includegraphics[width=\textwidth]{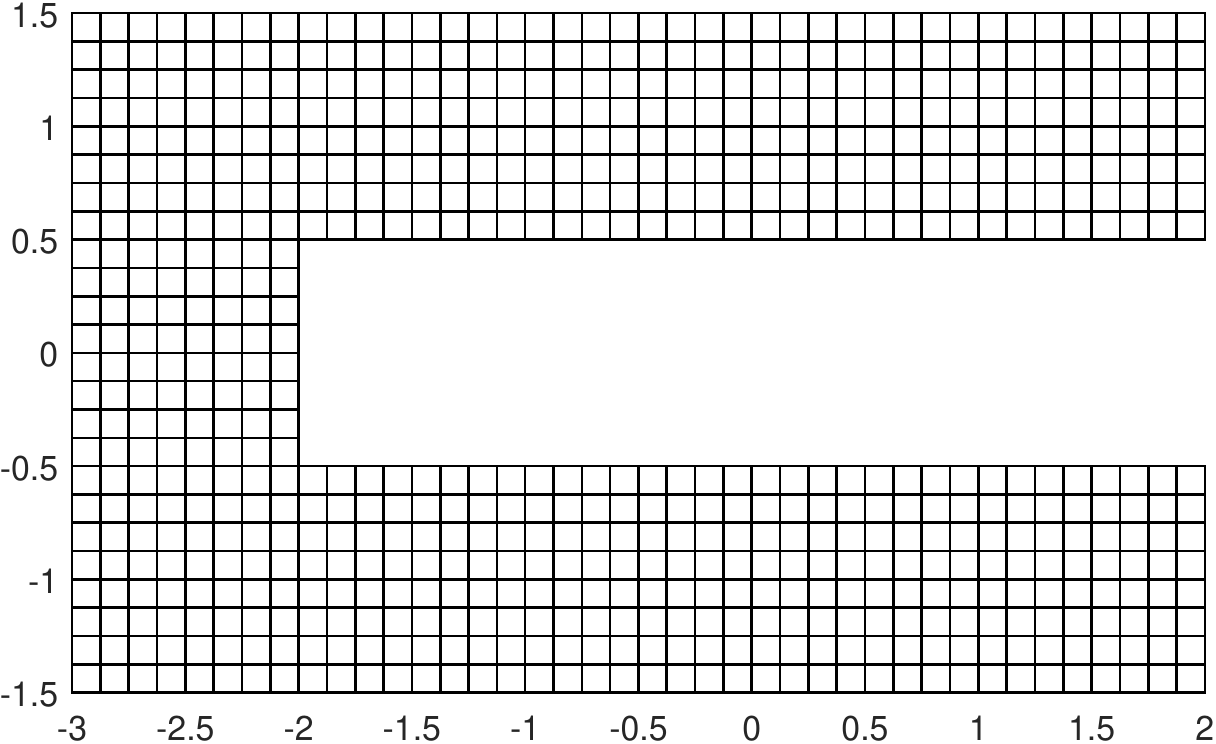}
        \subcaption{Undeformed domain.}
				
				\vspace{0.5cm}
				
				\includegraphics[width=\textwidth]{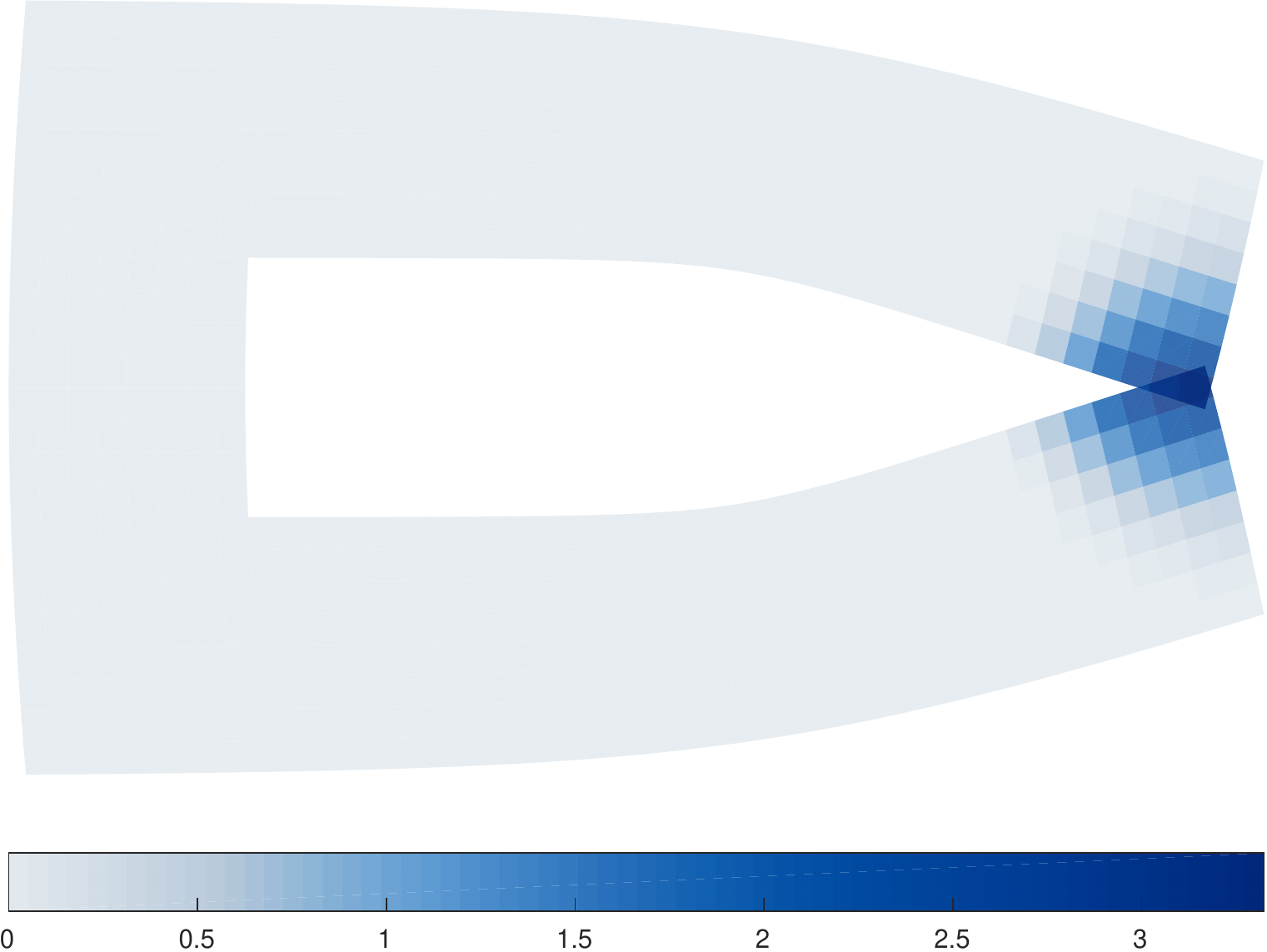}
        \subcaption{Density $d^{CN}_{\eps_2,y}(x)$ for $\epsilon_2=1/2$.}
    \end{minipage}
    \hfill
    \begin{minipage}[t]{.45\textwidth}
        \centering
        \includegraphics[width=\textwidth]{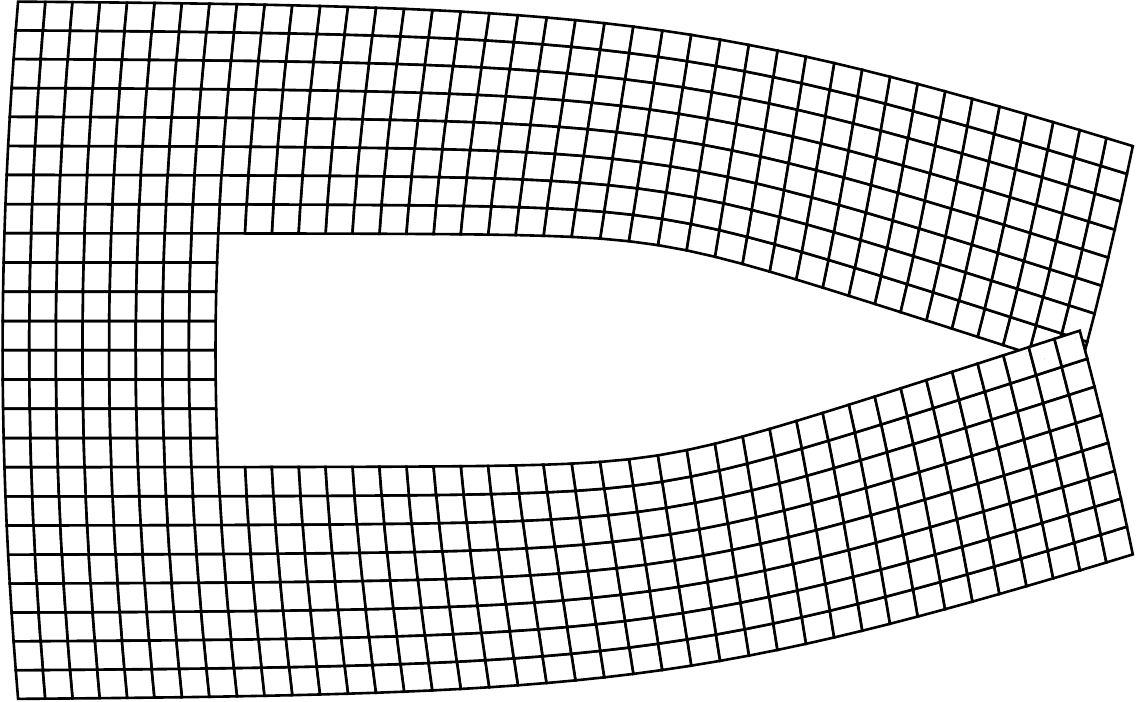}
        \subcaption{Deformed domain.}
				
				\vspace{0.5cm}
				
				\includegraphics[width=\textwidth]{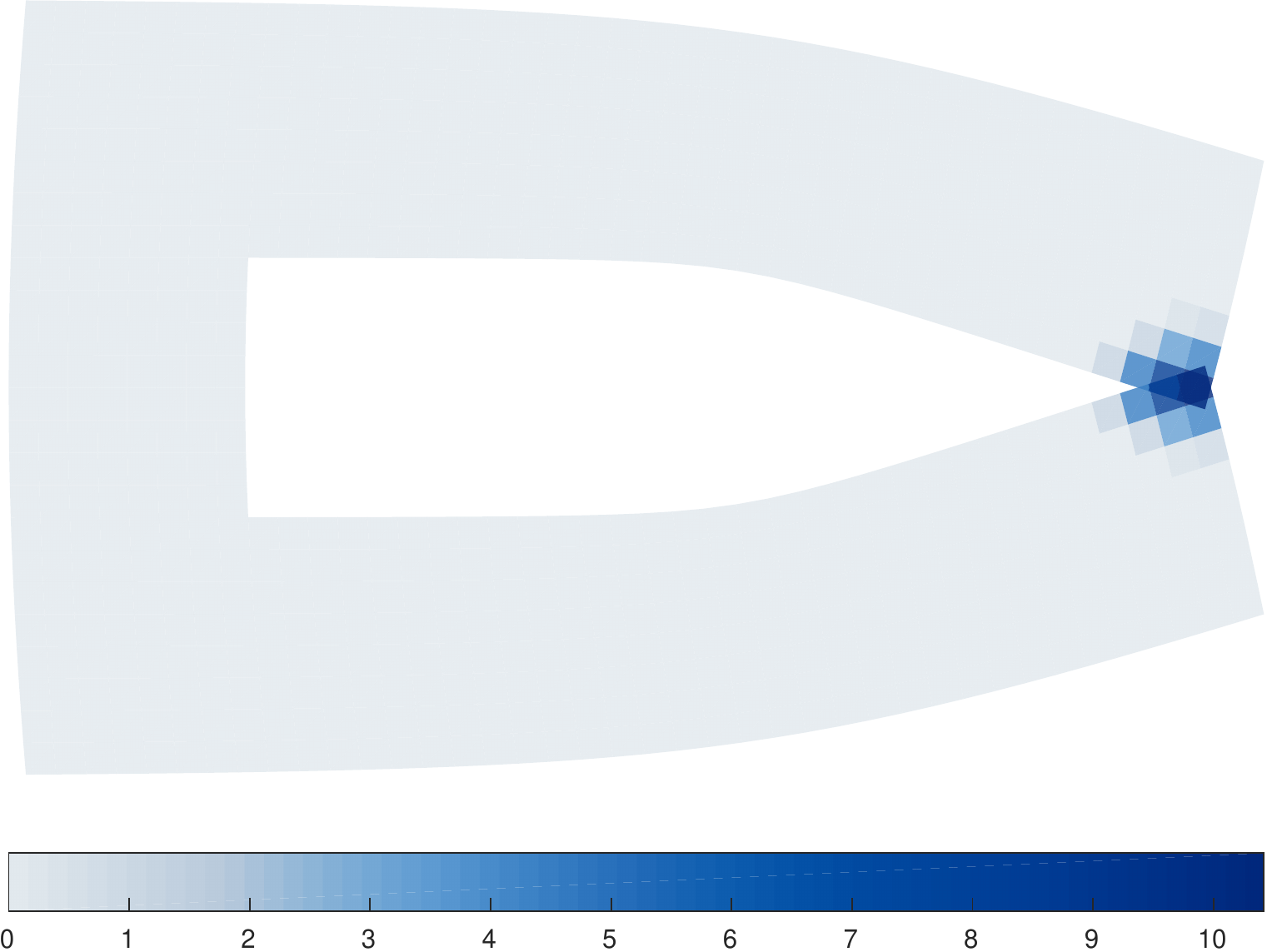}
        \subcaption{Density $d^{CN}_{\eps_2,y}(x)$ for $\epsilon_2=1/4$.}
				
    \end{minipage}  
    
    \caption{Pincers domain under given deformation.}\label{fig:1-4}
\end{figure}


\subsection{Analytic investigation of the penalty term}

We now analyze the behavior of $E^{CN}_{\eps_2}$ as $\eps_2\to 0$. 

\begin{thm}[convergence for penalty terms of type \eqref{sCN1}]\label{thm:cnsoft}
Let $\Omega\subset \RR^d$ be a bounded Lipschitz domain, 
let $E^{CN}_{\eps_2}$ be the functional defined in \eqref{sCN1} with $\beta>0$ and $g$ satisfying \eqref{sCN1b}, let $0<\alpha\leq 1$, $0<\delta,M_1,M_2$,
and let $R:=\diam(\Omega)=\sup_{x_1,x_2\in \Omega} \abs{x_1-x_2}$.
Then there exist
constants $r,a,A,\bar{\eps}>0$ only depending on $d$, $\Omega$, $\delta$, $\alpha$, $M_1$, $M_2$ and $g$ such that
for every 
$y\in C^{1,\alpha}(\Omega;\RR^d)$ with
\begin{align}\label{yconstants}
	\text{$\det \nabla y\geq \delta>0$}
	~~~\text{and}~~~
	\text{$\abs{\nabla y}\leq M_1$}~~~\text{on $\Omega$}~~~\text{and}~~~\norm{\nabla y}_{C^\alpha(\Omega)}\leq M_2
\end{align}
and every $0<\eps_2\leq \bar{\eps}$,
\begin{align}
	\eps_2^\beta E^{CN}_{\eps_2}(y)&\geq 
	a \abs{P_y(r\eps_2)}
	\label{sCN1conv1}\\
	\eps_2^\beta E^{CN}_{\eps_2}(y)&\leq 
	\bald[t]
	&	A \abs{P_y(R\eps_2)} \\
	\eald
	\label{sCN1conv2}
\end{align}
Here, 
for $s\geq 0$ (with $s=r\eps_2$ or $s=R\eps_2$ above),
\[
	P_{y}(s):=\mysetr{x\in \Omega}{
	\exists \tilde{x}\in\Omega:
	\abs{y(x)-y(\tilde{x})}\leq s~~\text{and}~~\abs{x-\tilde{x}}>\tfrac{\varrho}{2}
	},
\]
where $\varrho=\varrho(\Omega,\delta,M_1,M_2,\alpha)>0$ denotes the radius of guaranteed local injectivity from Lemma~\ref{lem:biLi} below.
\end{thm}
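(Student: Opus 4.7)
The positive part $[g(|\tilde x-x|)-g(\eps_2^{-1}|y(\tilde x)-y(x)|)]^+$ is nonzero precisely when $|y(\tilde x)-y(x)|<\eps_2|\tilde x-x|$, because $g$ is strictly increasing. Combined with the bi-Lipschitz bound from Lemma~\ref{lem:biLi}, which under \eqref{yconstants} yields a constant $c_{\mathrm{bi}}>0$ depending only on $\delta,M_1,M_2,\alpha$ with $|y(x)-y(\tilde x)|\geq c_{\mathrm{bi}}|x-\tilde x|$ whenever $|x-\tilde x|\leq \varrho$, any such pair with $\eps_2\leq c_{\mathrm{bi}}/2$ must satisfy $|x-\tilde x|>\varrho$. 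Hence any $x$ contributing to the inner integral automatically witnesses its own membership in $P_y(|\tilde x-x|)$, and this drives both bounds.

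For the upper bound \eqref{sCN1conv2}, I first bound, uniformly in $x\in\Omega$,
\[
\abs{\{\tilde x\in\Omega:\ |y(\tilde x)-y(x)|\leq \eps_2 R\}}\leq C_1\eps_2^d.
\]
Covering $\Omega$ by $N=N(\Omega,\varrho)$ balls of radius $\varrho/2$, on each of which Lemma~\ref{lem:biLi} makes $y$ injective with inverse Lipschitz constant $c_{\mathrm{bi}}^{-1}$, the intersection of the level set with each such ball has measure at most $\omega_d(\eps_2 R/c_{\mathrm{bi}})^d$; summing gives $C_1$. The integrand is everywhere bounded by $\eps_2^{-d}g(R)$ and vanishes outside the level set, so the inner integral is at most $C_1 g(R)$; as noted above, a positive inner integral forces $x\in P_y(R\eps_2)$, yielding \eqref{sCN1conv2} with $A=C_1 g(R)$.

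For the lower bound \eqref{sCN1conv1}, fix $x\in P_y(r\eps_2)$ with a witness $\tilde x_0$ satisfying $|x-\tilde x_0|>\varrho/2$ and $|y(x)-y(\tilde x_0)|\leq r\eps_2$. Choose $r,r'>0$ depending only on $\varrho,M_1,g$ so that $g(r+M_1 r')\leq \tfrac12 g(\varrho/4)$, and shrink $\bar\eps$ so that $r'\bar\eps\leq \varrho/4$. For every $\tilde x\in B(\tilde x_0,r'\eps_2)\cap\Omega$, the $M_1$-Lipschitz control gives $\eps_2^{-1}|y(\tilde x)-y(x)|\leq r+M_1 r'$ while the triangle inequality yields $|\tilde x-x|\geq \varrho/4$, so monotonicity of $g$ forces the integrand to be at least $\tfrac12 \eps_2^{-d}g(\varrho/4)$. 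The Lipschitz regularity of $\partial\Omega$ supplies a uniform cone condition $|B(\tilde x_0,r'\eps_2)\cap\Omega|\geq c_\Omega(r'\eps_2)^d$, so the inner integral exceeds the positive constant $a:=\tfrac12 c_\Omega (r')^d g(\varrho/4)$; integrating over $P_y(r\eps_2)$ proves \eqref{sCN1conv1}.

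The main obstacle is the uniform sheet-count in the upper bound: I need $|\{\tilde x:|y(\tilde x)-y(x)|\leq \eps_2 R\}|$ to scale like $\eps_2^d$ rather than a worse power, which crucially requires the bi-Lipschitz radius $\varrho$ from Lemma~\ref{lem:biLi} (depending only on the admissible data) together with the Lipschitz regularity of $\Omega$ to cap the covering number $N$. Once $\varrho$ and $c_{\mathrm{bi}}$ are determined, the thresholds $\bar\eps$, $r$, $r'$ are chosen consecutively, each depending only on $d,\Omega,\delta,\alpha,M_1,M_2,g$ as required.
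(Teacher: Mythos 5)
Your argument is essentially correct and recovers both bounds, using the same two workhorse ideas as the paper: (a) the integrand is nonzero only where $|y(\tilde x)-y(x)|<\eps_2|\tilde x-x|$, which by the lower bi-Lipschitz bound of Lemma~\ref{lem:biLi} forces $|\tilde x-x|$ above the local-injectivity threshold once $\eps_2$ is small; and (b) the interior cone condition of the Lipschitz domain controls small volumes near a witness point. The lower-bound argument is essentially identical to the paper's step (iii): a witness $\tilde x_0$, a ball $\Omega\cap B(\tilde x_0,r'\eps_2)$, the local Lipschitz estimate on $y$, monotonicity of $g$, and the cone condition.

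The one place you genuinely diverge is the upper bound. The paper builds, for each $x\in P_y(R\eps_2)$, a finite $\varrho$-separated net $X_y(R\eps_2,x)$ of representatives inside $Q_y(R\eps_2,x)$, and shows that $Q_y(R\eps_2,x)$ is covered by small balls $\Omega(x_0,KR\eps_2)$ centred at those representatives; the cardinality of $X_y$ is bounded by a packing count. You instead use an $x$-\emph{independent} cover of $\Omega$ by $N(\Omega,\varrho)$ balls of fixed radius comparable to $\varrho$, and bound the measure of the level set $\Lambda_x=\{\tilde x:|y(\tilde x)-y(x)|\leq\eps_2R\}$ on each cover ball via the local inverse Lipschitz constant. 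Both are packing-vs-covering arguments driven by the same $\varrho$; your version is slightly cleaner to state because the cover does not depend on $x$, while the paper's version is more local in spirit. Either way one lands on $J_{y,\eps_2}(x)\leq C\, g(R)\,\eps_2^d$ with $C$ depending only on the admissible data.

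Two small imprecisions worth fixing: the local Lipschitz constant of $y$ from Lemma~\ref{lem:biLi} is $M_1\sqrt{1+L^2}$, not $M_1$, because near $\partial\Omega$ paths in $\Omega$ cost the Lipschitz-graph factor; this only changes the choice of $r,r'$. And in the level-set estimate one should pick an anchor $\tilde x_0\in\Lambda_x\cap B_i$ and use the triangle inequality through $y(x)$, picking up a factor of $2$ (i.e.\ $|\Lambda_x\cap B_i|\leq\omega_d(2\eps_2 R/c_{\mathrm{bi}})^d$), and the bi-Lipschitz bound itself applies to pairs at distance below the local injectivity radius, which after Lemma~\ref{lem:biLi}'s proof is really $\varrho/2$, matching the $>\varrho/2$ threshold in the definition of $P_y$. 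None of this affects the structure; the constants simply need to be tracked with a bit more care to land exactly on the theorem's dependencies.
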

\begin{rem}
As a consequence of Lemma~\ref{lem:biLi}, 
$P_y(0)=\{ N_y\circ y>1\}$, i.e., it is precisely the subset in the reference configuration where global injectivity of $y$ fails.
For $s>0$, $P_{y}(s)$
is the set where $y$ almost fails to be injective up to an error of $s$.
Moreover, $P_y(0)$ is the limit $P_{y}(s)$ as $s\searrow 0$ (i.e., their intersection for all $s>0$; $P_y(s_1)\subset P_y(s_2)$ if $s_1\leq s_2$).
Thus, both the upper and the lower bound for $\eps_2^\beta E^{CN}_{\eps_2}$ in \eqref{sCN1conv1} and \eqref{sCN1conv2}, respectively, converge as $\eps_2\to 0$ by monotone convergence:
\[
	a\abs{P_y(0)}=\lim_{\eps_2\to 0} a\abs{P_y(r\eps_2)}\leq 
	\lim_{\eps_2\to 0} A\abs{P_y(R\eps_2)} = A\abs{P_y(0)}
\]
Up to the constants, these limits coincide and are functionals 
of the form of
\eqref{ciarletnecas3} with a constant integrand. 
\end{rem}
\begin{rem}
The assumption \eqref{yconstants} holds in sets with bounded energy, see Proposition~\ref{prop:yEbounded}.
\end{rem}
For the proof of the theorem, we need the following version of the Inverse Mapping Theorem with additional control, also near $\partial\Omega$. 
\begin{lem}\label{lem:biLi}
Let $\Omega\subset \RR^N$ be a bounded Lipschitz domain with local Lipschitz constants bounded by a fixed $L>0$, 
and let $y\in C^{1,\alpha}(\Omega;\RR^d)$ such that \eqref{yconstants} holds.
Then there exists an $\varrho>0$ which only depends on $\delta,M_1,M_2,\alpha$ and $\Omega$ 
such that for every $\bar{x}\in \bar\Omega$, $y$ is injective on 
$\Omega(\bar{x},\varrho):=B_{\varrho}(\bar{x})\cap \Omega$. Moreover, 
$y$ is bi-Lipschitz with explicitly known constants:
\begin{align}\label{lemIMT-biLi}
\bald
	\frac{1}{2} \frac{\delta}{M_1^{d-1}} \abs{x_1-x_2}\leq 
	\abs{y(x_1)-y(x_2)} \leq M_1 \sqrt{1+L^2} \abs{x_1-x_2} ,
\eald
\end{align}
for all $x_1,x_2\in \Omega(\bar{x},\varrho)$. 
In addition, the inverse 
$y^{-1}$ of the restriction of $y$ to $\Omega(\bar{x},\varrho)$ is of class $C^{1,\alpha}$.
\end{lem}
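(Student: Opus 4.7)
The plan is to prove Lemma~\ref{lem:biLi} as a quantitative version of the classical inverse function theorem, the new feature being a control up to and including $\partial\Omega$ via a radius $\varrho$ depending only on $\delta,M_1,M_2,\alpha$ and $\Omega$. Two pointwise estimates drive the argument: from the cofactor identity $F^{-1}=(\cof F)^T/\det F$ together with $|\nabla y|\leq M_1$ and $\det\nabla y\geq\delta$, the smallest singular value of $\nabla y(x)$ is bounded below by $\delta/M_1^{d-1}$ for every $x\in\Omega$; and from $\|\nabla y\|_{C^\alpha}\leq M_2$, the oscillation $|\nabla y(x)-\nabla y(\bar{x})|$ is at most $M_2(2\varrho)^\alpha$ throughout $\Omega(\bar{x},\varrho)$.

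The key geometric input is that a bounded Lipschitz domain is locally quasiconvex: for $\varrho$ smaller than a scale depending only on $\Omega$ and $L$, any two points $x_1,x_2\in\Omega(\bar{x},\varrho)$ can be joined by a rectifiable curve $\gamma\subset\Omega$ of length at most $\sqrt{1+L^2}\,|x_1-x_2|$, obtained by lifting the Euclidean chord above the defining Lipschitz graph of $\partial\Omega$ in a local boundary chart. Writing
\begin{equation*}
y(x_2)-y(x_1)=\nabla y(\bar{x})(x_2-x_1)+\int_\gamma\bigl(\nabla y-\nabla y(\bar{x})\bigr)\,d\gamma,
\end{equation*}
the upper bound in \eqref{lemIMT-biLi} is immediate from $|y(x_2)-y(x_1)|\leq M_1\,\mathrm{length}(\gamma)\leq M_1\sqrt{1+L^2}\,|x_1-x_2|$, while the lower bound combines $|\nabla y(\bar{x})(x_2-x_1)|\geq (\delta/M_1^{d-1})|x_1-x_2|$ with the oscillation estimate $M_2(2\varrho)^\alpha\sqrt{1+L^2}\,|x_1-x_2|$ on the integral term. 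Choosing $\varrho$ small enough, uniformly in $\bar{x}$ and depending only on the listed constants, so that $M_2(2\varrho)^\alpha\sqrt{1+L^2}\leq \tfrac{1}{2}\delta/M_1^{d-1}$ yields the sharp lower bound $\tfrac{1}{2}\delta/M_1^{d-1}$, and with it injectivity. The $C^{1,\alpha}$ regularity of $y^{-1}$ then follows from the classical inverse function theorem combined with the facts that matrix inversion is smooth on $\{F:\det F\geq\delta\}$ and that $y^{-1}$ is already Lipschitz by the estimate just proved.

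I expect the main obstacle to be the geometric quasiconvexity step, namely establishing the length constant $\sqrt{1+L^2}$ together with a uniform injectivity scale $\varrho_0$ as $\bar{x}$ ranges over $\bar\Omega$. This requires a finite cover of $\bar\Omega$ by local boundary charts with uniformly bounded Lipschitz constants plus interior balls, and a careful patching of the graph-lifting construction across chart transitions so that the same constant applies throughout; once this is in place, the remainder of the proof reduces to the routine analytic estimates above.
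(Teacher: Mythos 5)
Your proposal follows essentially the same route as the paper's proof: you use the local quasiconvexity of a Lipschitz domain (length constant $\sqrt{1+L^2}$ via graph-lifted paths), linearize $y$ at a base point, bound the error integral by the $C^\alpha$ oscillation, invoke the cofactor bound $|\nabla y^{-1}|\leq M_1^{d-1}/\delta$, shrink $\varrho$ to absorb the error, and recover $C^{1,\alpha}$ regularity of $y^{-1}$ from $Dy^{-1}=Dy(y^{-1})^{-1}$. The only discrepancy is a cosmetic constant: your oscillation bound $M_2(2\varrho)^\alpha$ should account for the path $\gamma$ possibly straying beyond $\Omega(\bar{x},\varrho)$ into a neighborhood of radius roughly $(1+2\sqrt{1+L^2})\varrho$ (the paper tracks this via $R_0/r_0$), but since $\varrho$ is chosen freely small this does not affect the argument.
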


\begin{proof}
Since $\Omega$ is a Lipschitz domain, there exists an $R_0=R_0(\Omega)>0$
such that for all $x_0\in \partial\Omega$,
\begin{align}\label{lemIMT-0}
\bald
	&\text{in a cuboid containing $B_{R_0}(x_0)$, $\partial\Omega$ is the graph}\\
	&\text{of a Lipschitz map with constant at most $L=L(\Omega)$.}
\eald
\end{align}
For 
\[
	r_0:=\frac{1}{1+2\sqrt{1+L^2}}R_0,
\]
an explicit path connecting $x_1,x_2\in \Omega(x_0,r_0)$ in 
the larger set $\Omega(x_0,R_0)$
is given by the ``V''-shaped piecewise $C^1$-path with slope $L$ a.e.,
in $\Omega(x_0,R_0)$ below the graph representing $\partial\Omega$. The length of such a path
is at most $\sqrt{1+L^2}\abs{x_2-x_1}$.
In particular, if $\varrho\leq r_0$ and $x_1,x_2\in \Omega(x_0,\varrho)$, such a path never leaves the set $\Omega(x_0,\frac{R_0}{r_0}\varrho)$.
Hence, \eqref{lemIMT-0} implies that
\begin{align}\label{lemIMT-1}
	\bald
	&1\leq d_{x_0,\varrho}\leq \sqrt{1+L^2}\quad\text{for all $\varrho\leq r_0$},
	\eald
\end{align}
where	
\begin{align*}
	\bald	
	&d_{x_0,\varrho}:=\underset{x_1,x_2\in \Omega(x_0,\varrho)}{\sup} 
	\inf_p\mysetr{\frac{\len(p)}{\abs{x_1-x_2}}}{
	\begin{array}{l}
	p:[0,1]\to \Omega(x_0,\frac{R_0}{r_0}\varrho)\\
	\text{is piecewise $C^1$},\\ 
	\text{$p(0)=x_1$, $p(1)=x_2$}
	\end{array}}.
	\eald
\end{align*}
Notice that $d_{x_0,\varrho}$ is the worst possible ratio of intrinsic path distance and Euclidean distance in 
$\Omega(x_0,\frac{R_0}{r_0}\varrho)$ for pairs of points in the smaller set $\Omega(x_0,\varrho)$.

As a first consequence of \eqref{lemIMT-1}, $y$ is globally Lipschitz on $\Omega(x_0,\varrho)$ with a Lipschitz constant of at most 
$\norm{Dy}_{L^\infty(\Omega(x_0,\varrho))} d_{x_0,\varrho} \leq M_1\sqrt{1+L^2}$, which gives the second inequality in \eqref{lemIMT-biLi}.

For the first inequality in \eqref{lemIMT-biLi} let $\bar{x}\in \bar\Omega$.
If either $B_{\frac{1}{2}r_0}(\bar{x})\subset \Omega$ or $\bar{x}\in \partial \Omega$, we may take $x_0:=\bar x$ and 
\eqref{lemIMT-1} holds for all $\varrho\leq \frac{1}{2}r_0$.
In case we are given $\bar{x}\in \bar\Omega$ ``in between'' with 
$B_{\frac{1}{2}r_0}(\bar{x})\cap \partial\Omega\neq \emptyset$,  there always exists
$x_0=x_0(\bar{x})\in \partial\Omega$ 
such that $\Omega(\bar{x},\frac{1}{2}r_0)\subset \Omega(x_0,r_0)$
and we again have \eqref{lemIMT-1}.
In addition, for any $\bar{x}\in \bar\Omega$, all $\varrho\leq r_0$ and any pair
$x_1,x_2 \in \Omega(\bar{x},\frac{1}{2}\varrho)\subset \Omega(x_0,\varrho)$
connected with a $C^1$-path $p:[0,1]\to \Omega(x_0,\frac{R_0}{r_0}\varrho)$, $p(0)=x_1$, $p(1)=x_2$,
we have that
\[
\bald
	&\abs{y(x_1)-y(x_2)}=\abs{\int_0^1 Dy(p(t))\dot p(t)\,dt} \\
	&\geq \abs{\int_0^1 Dy(x_0)\dot p(t)\,dt} - \abs{\int_0^1 \abs{Dy(p(t))-Dy(x_0)} \abs{\dot p(t)}\,dt} \\
	&\geq \abs{\int_0^1 Dy(x_0)\dot p(t)\,dt}-  M_2 \norm{p-x_0}_{L^\infty(0,1)}^\alpha \int_0^1 \abs{\dot p(t)}\,dt \\
	&\geq \abs{Dy(x_0) (x_2-x_1)}- M_2 \left(\frac{R_0}{r_0}\varrho\right)^\alpha \len(p).
\eald
\]
Since this is true for all such paths $p$ and $d_{x_0,\varrho}\leq \sqrt{1+L^2}$ by \eqref{lemIMT-1}, we infer that
\begin{align}\label{lemIMT-2}
\bald
	&\abs{y(x_1)-y(x_2)}\\
	&\geq \abs{Dy(x_0) (x_2-x_1)}- M_2 \left(\frac{R_0}{r_0}\varrho\right)^\alpha \sqrt{1+L^2} \abs{x_1-x_2}
\eald
\end{align}
As $Dy(x_0)$ is invertible with 
$\abs{Dy(x_0)^{-1}}\leq \frac{M_1^{N-1}}{\delta}$, we also have that
\begin{align}\label{lemIMT-2a}
\bald
	\abs{Dy(x_0) (x_2-x_1)}\geq \frac{\delta}{M_1^{N-1}} \abs{x_1-x_2}. 
\eald
\end{align}
We now choose $\varrho$ small enough so that $M_2 \left(\frac{R_0}{r_0}\varrho\right)^\alpha \sqrt{1+L^2}\leq \frac{1}{2} \frac{\delta}{M_1^{N-1}}$,
and for that choice, \eqref{lemIMT-2}, \eqref{lemIMT-2a} yield that
\begin{align*}
\bald
	 \abs{y(x_1)-y(x_2)} &\geq \frac{1}{2} \frac{\delta}{M_1^{N-1}} \abs{x_1-x_2} 
\eald
\end{align*}
for all $x_1,x_2\in \Omega(\bar{x},\frac{\varrho}{2})$, proving the first inequality in \eqref{lemIMT-biLi}.
This in turn implies that $y$ is locally injective. Finally, to see the asserted $C^{1,\alpha}$-regularity of $y^{-1}$, observe that
$D y^{-1}(z)=Dy(y^{-1}(z))^{-1}$. 
Therefore, due to the Lipschitz regularity of $y^{-1}$ provided by \eqref{lemIMT-biLi},
$D y^{-1}\in C^{\alpha}$ just like $Dy$.
\end{proof}

\begin{proof}[Proof of Theorem~\ref{thm:cnsoft}] 
As before, we use the following shorthand notation for $x\in\Omega$ and $s>0$:
\[
\bald
	&\Omega(x,s):=B_{s}(x)\cap \Omega,
\eald
\]
Next, we introduce and study a few auxiliary sets related to the definition $P_y(s)$ that will be needed in the rest of the proof.

{\bf (i) Auxiliary sets related to $\mbf{P_y(s)}$: $\mbf{Q_y(s,x)}$ and $\mbf{X_y(s,x)}$.} \\
For $s\geq 0$ and $x\in P_y(s)$ let
$Q_y(s,x)$ denote the set of all admissible choices of $\tilde{x}$ in the definition of $P_y(s)$, additionally 
including those that are close to $x$:
\[
	Q_y(s,x):=\mysetr{\tilde{x}\in \Omega}{\abs{y(x)-y(\tilde{x})}\leq s}.
\]
We claim that for $s$ small enough, $Q_y(s,x)$ is separated into subsets of 
small balls that are pairwise far apart: 
For every
\[
	s<K\varrho \quad\text{where}~~K:=\frac{\delta}{4 M_1^{d-1}},
\] 
we have that
\begin{align}\label{thmcns1-20}
\bald
	Q_y(s,x)\subset (\Omega\setminus \Omega(x_0,\varrho))  \,\cup\, \Omega(x_0,K s)
	\quad\text{for all $x_0\in Q_y(s,x)$}.
\eald
\end{align}
For the proof of \eqref{thmcns1-20}, take any
$z\in \Omega$, $z\notin (\Omega\setminus \Omega(x_0,\varrho))  \cup \Omega(x_0,K s)$. Then
\[
	\frac{4 M_1^{d-1}}{\delta} s< \abs{z-x_0}\leq \varrho,
\]
and the uniform bi-Lipschitz property \eqref{lemIMT-biLi} 
entails that
\[
	2s< \abs{y(z)-y(x_0)}.
\]
Since $x_0\in Q_y(s,x)$ and thus $\abs{y(x)-y(x_0)}\leq s$, we infer that
$s< \abs{y(z)-y(x)}$, i.e., $z\notin Q_y(s,x)$.

As a consequence of \eqref{thmcns1-20},
for $s$ small enough as above, we can select
a finite set $X_y(s,x)$ 
such that 
\begin{align}\label{thmcns1-21}
\bald
	&X_y(s,x) \subset Q_y(s,x),\quad x\in X_y(s,x),\\
	&\abs{x_1-x_2}\geq \varrho\quad\text{for all $x_1,x_2\in X_y(s,x)$, $x_1\neq x_2$},
\eald
\end{align}
and $Q_y(s,x)$ is contained is a disjoint union of small balls centered at points in $X_y(s,x)$:
\begin{align}\label{thmcns1-22}
\bald
	Q_y(s,x)\subset \bigcup_{x_0\in X_y(s,x)} \Omega\left(x_0, K s \right).
\eald
\end{align}
Finally, notice that by the definition of $P_y(s)$, 
$Q_y(s,x)\setminus \Omega(x,\varrho)\neq \emptyset$. Therefore, $X(s,x)$ always contains at least one more point besides $x$:
\begin{align}\label{thmcns1-23}
\bald
	\sharp X(s,x)\geq 2\quad\text{for all $x\in P_y(s)$}.
\eald
\end{align}


\noindent {\bf (ii) Splitting $\mbf{E^{CN}_{\eps_2}}$.}\\
For any $s\geq 0$, splitting 
$\Omega=P_y(s) \cup (\Omega\setminus P_y(s))$
gives that
\begin{align}\label{thmcns1-10}
\bald
&\eps_2^\beta E^{CN}_{\eps_2}(y) = \int_{P_y(s)} \frac{1}{\eps_2^d} J_{y,\eps_2}(x)\,dx+
\int_{\Omega\setminus P_y(s)} \frac{1}{\eps_2^d} J_{y,\eps_2}(x)\,dx,
\eald
\end{align}
where 
\begin{align*}
&\bald
J_{y,\eps_2}(x)&:= \int_{\Omega} 
\left[g(\abs{\tilde{x}-x})-g\Big(\frac{\abs{y(\tilde{x})-y(x)}}{\eps_2}\Big)\right]^+
	\,d\tilde{x}.\\
\eald
\end{align*}
Below, we estimate the two terms on the right hand side of \eqref{thmcns1-10} separately, for suitable choices of $s$ depending on $\eps_2$.

{\bf (iii) Proof of \eqref{sCN1conv1}.} \\
We use $s:=r\eps_2$ in \eqref{thmcns1-10}, with
some constant $0<r\leq 1$ to be determined later.
Since $J_{y,\eps_2}\geq 0$, we get that
\begin{align}\label{thmcns1-1a1}
\bald
\eps_2^\beta E^{CN}_{\eps_2}(y) & \geq \int_{P_y(r\eps_2)} \frac{1}{\eps_2^d} J_{y,\eps_2}(x)\,dx.
\eald
\end{align}

The integrand of $J_{y,\eps_2}$ is also non-negative, and therefore, 
using \eqref{thmcns1-22},
for each $x\in P_y(r\eps_2)$ 
we also have that
\begin{align}\label{thmcns1-1a2}
\bald
	J_{y,\eps_2}(x)
 \geq 
\sum_{x_0 \in X(r\eps_2,x)\setminus \{x\}} 
I_{y,x,x_0}(\eps_2)
\eald
\end{align}
where 
\begin{align}\label{thmcns1-1b}
  I_{y,x,x_0}(\eps_2):=
	\int_{\Omega(x_0,r\eps_2) }
\left[g(\abs{\tilde{x}-x})-g\Big(\frac{1}{\eps_2}\abs{y(\tilde{x})-y(x)}\Big)\right]^+
	\,d\tilde{x}.
\end{align}
We will now proceed to estimate $I_{y,x,x_0}(\eps_2)$ for all $x\in P_y(r\eps_2)$ and $x_0\in X(r\eps_2,x)\setminus \{x\}$,
which by \eqref{thmcns1-21} in particular implies that $\abs{x-x_0}\geq \varrho$.
For $\tilde{x}\in \Omega(x_0,r\eps_2)$,
the latter yields that
\[
	\abs{\tilde{x}-x}\geq \abs{x-x_0}-\abs{x_0-\tilde{x}} \geq \frac{\varrho}{2}
\]
as long as $2 r\eps_2\leq  \varrho$. 
Later, it will be convenient to also have that
$r\eps_2\leq \frac{1}{2}R=\frac{1}{2}\diam(\Omega)$.
As long as $r\leq 1$,
it altogether suffices if
\[
	\eps_2\leq \bar\eps:=\min\left\{\frac{\varrho}{2},\frac{R}{2}\right\}
\]
Moreover, recall that $x_0\in X(r\eps_2,x)\subset Q(r\eps_2,s)$. By the definition of $Q(r\eps_2,s)$ in step (i),
this entails that $\abs{y(x_0)-y(x)}\leq r\eps_2$, and consequently,
\[
\bald
	\abs{y(\tilde{x})-y(x)} &\leq \abs{y(x_0)-y(x)}+\abs{y(\tilde{x})-y(x_0)} \\
	& \leq r\eps_2+\abs{y(\tilde{x})-y(x_0)}
	\leq r\eps_2+M_1\sqrt{L^2+1} \abs{\tilde{x}-x_0},
\eald
\]
where we also used the local Lipschitz continuity of $y$ given by \eqref{lemIMT-biLi}.
With these observation and the monotonicty of $g$, both expressions in $g$ can be estimated 
and in this way, \eqref{thmcns1-1b} implies that
\begin{align}\label{thmcns1-2}
\bald
I_{y,x,x_0}(\eps_2)
&\geq  \int_{\Omega(x_0,r\eps_2) }
\left[g\Big(\frac{\varrho}{2}\Big)-g\Big(r+\frac{1}{\eps_2}M_1\sqrt{L^2+1} \abs{\tilde{x}-x_0}\Big)\right]^+
	\,d\tilde{x} \\
&\geq \int_{\Omega(x_0,r\eps_2) } 
\left[g\Big(\frac{\varrho}{2}\Big)-g\Big(r+r M_1\sqrt{L^2+1}\Big)\right]^+
	\,d\tilde{x}.
\eald
\end{align}	
Here, $r+r M_1\sqrt{L^2+1}\leq \tfrac{\varrho}{4}$ for
\[
  r:=\min\left\{\tfrac{\varrho}{4}(1+M_1\sqrt{L^2+1})^{-1},1\right\}.
\]
Substituting $t:=r\eps_2\leq \bar\eps\leq \frac{1}{2}R$, we conclude that
\begin{align}\label{thmcns1-3}
\bald
I_{y,x,x_0}(\eps_2)
&\geq a \eps_2^d
\eald
\end{align}	
with the constant
\[
	0<a:=r^d\left(g\Big(\frac{\varrho}{2}\Big)-g\Big(\frac{\varrho}{4}\Big)\right)
	\inf_{t,x_0} \mysetl{\frac{\abs{\Omega(x_0,t)}}{t^d}}{\begin{array}{l}
	0<t\leq \frac{1}{2}R,\\x_0\in \Omega
	\end{array}}.
\]
Notice that the infimum above is a geometric constant which only depends on $\Omega$. It is determined by
the smallest possible the volume fractions $\abs{\Omega\cap B_t(x_0)}/\abs{B_t(x_0)}$. 
Such fractions are bounded away from zero because $\Omega$, being a Lipschitz domain, satisfies an interior cone condition.

Combined with \eqref{thmcns1-1a1}, \eqref{thmcns1-1a2} and \eqref{thmcns1-23}, \eqref{thmcns1-3} yields \eqref{sCN1conv1}.

\noindent {\bf (iv) Proof of \eqref{sCN1conv2}.} \\
This time, we use \eqref{thmcns1-10} with $s=R\eps_2$, $R=\diam{\Omega}$,
and distinguish the cases $x\in P_y(R\eps_2)$ and $x\in \Omega\setminus P_y(R\eps_2)$.

\noindent {\bf Case 1: $\mbf{x\in \Omega\setminus P_y(R\eps_2)}$.} 
We claim that for such $x$, $J_{y,\eps_2}(x)=0$ for sufficiently small $\eps_2$. If $x\in \Omega\setminus P_y(R\eps_2)$ then 
for all $\tilde{x}\in\Omega$,
\[
  \abs{y(\tilde{x})-y(x)}> R\eps_2\quad\text{or}\quad\abs{\tilde{x}-x}<\frac{\varrho}{2}.
\]
In the former case, the integrand in $J_{y,\eps_2}(x)$ vanishes
since $\abs{x-\tilde{x}}\leq \operatorname{diam}{\Omega}=R$.
In the latter case, Lemma~\ref{lem:biLi} can be applied, and
due to the monotonicity of $g$ and the lower bound in \eqref{lemIMT-biLi},
the integrand in $J_{y,\eps_2}(x)$ vanishes again, at least
if
\begin{align*}
\bald
  \eps_2\leq \tilde{\eps}, \quad \tilde{\eps}:=\frac{\delta}{2 M_1^{d-1}}.
\eald
\end{align*}
Hence, 
\begin{align}\label{thmcns1-12}
\bald
  J_{y,\eps_2}(x)=0\quad \text{if $x\in \Omega\setminus P_y(R\eps_2)$ and $\eps_2\leq \tilde{\eps}$}.
\eald
\end{align}

\noindent {\bf Case 2: $\mbf{x\in P_y(R\eps_2)}$.} 
Let
\[
	\eps_2\leq \bar\eps:= \frac{\varrho}{R} K,~~\text{with}~~K=\frac{\delta}{4 M_1^{d-1}}~~\text{as in \eqref{thmcns1-20}}
\]
($\bar{\eps}$ here differs from its old namesake).
Since $\abs{y(\tilde{x})-y(x)}<R\eps_2$ if and only if $\tilde{x}\in Q_y(R\eps_2,x)$, the integrand 
in $J_{y,\eps_2}(x)$ vanishes for all other $\tilde{x}$:
\[
  \left[g(\abs{\tilde{x}-x})-g\Big(\frac{\abs{y(\tilde{x})-y(x)}}{\eps_2}\Big)\right]^+
	\leq 
	\left[g(R)-g\Big(\frac{\abs{y(\tilde{x})-y(x)}}{\eps_2}\Big)\right]^+
	= 0
\]
if $\abs{y(\tilde{x})-y(x)}\geq R\eps_2$, since $g$ is increasing.
For $\eps_2\leq \bar{\eps}$ and $x\in P_y(R\eps_2)$, 
we can therefore use \eqref{thmcns1-22}
to estimate $J_{y,\eps_2}(x)$
as follows:
\begin{align}\label{thmcns1-24}
\bald
&J_{y,\eps_2}(x)\\
&\leq \int_{Q_y(R\eps_2,x)}
\left[g(R)-g\Big(\frac{\abs{y(\tilde{x})-y(x)}}{\eps_2}
\Big)\right]^+ \,d\tilde{x},\\
&\leq
\sum_{x_0\in X_y(x,R\eps_2)} 
\int_{\Omega(x_0, K R \eps_2)} 
\left[g(R)-g\Big(\frac{\abs{y(\tilde{x})-y(x)}}{\eps_2}
\Big)\right]^+ \,d\tilde{x},\\
&\leq  
\sum_{x_0\in X_y(x,R\eps_2)}
\abs{\Omega(x_0, K R \eps_2)} g(R).\\
&\leq (\sharp X_y(x,R\eps_2)) (KR)^d \abs{B_{1}(0)} g(R).\\
\eald
\end{align}
This is bounded by a suitable constant $A$ because
$\sharp X_y(x,R\eps_2)$, the number of elements of $X_y(x,R\eps_2)$, is bounded by a constant 
only depending on $\varrho$ and $R=\operatorname{diam}{\Omega}$,
as a consequence of \eqref{thmcns1-21}.
\end{proof}
Theorem~\ref{thm:cnsoft} provides additional insights on the behavior of $E^{CN}_{\eps_2}$:
\begin{cor}\label{cor:whenECNvanishes}
In the situation of Theorem~\ref{thm:cnsoft}, let $\eps_2\leq \bar{\eps}$ and suppose in addition that $y$ is more than a distance of $R\eps_2$ away from any self-contact, i.e.,
\begin{align}\label{corwECNv-1}
	\abs{y(x_1)-y(x_2)}> R\eps_2\quad\text{for all}~\abs{x_1-x_2}> \tfrac{\varrho}{2},
\end{align}
with $R=\operatorname{diam}\Omega$ as before.
Then $E^{CN}_{\eps_2}(y)=0$. 
\end{cor}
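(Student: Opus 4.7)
The plan is to observe that Corollary~\ref{cor:whenECNvanishes} follows essentially for free from the upper bound \eqref{sCN1conv2} in Theorem~\ref{thm:cnsoft}, because the hypothesis \eqref{corwECNv-1} is simply a reformulation of the statement $P_y(R\eps_2)=\emptyset$.

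More precisely, recall the definition
\[
	P_{y}(R\eps_2)=\mysetr{x\in \Omega}{\exists \tilde{x}\in\Omega:\,\abs{y(x)-y(\tilde{x})}\leq R\eps_2~\text{and}~\abs{x-\tilde{x}}>\tfrac{\varrho}{2}}.
\]
The contrapositive of \eqref{corwECNv-1} says that whenever $|x_1-x_2|>\varrho/2$ one has $|y(x_1)-y(x_2)|>R\eps_2$, which is precisely the statement that no admissible $\tilde{x}$ exists for any $x$, i.e., $P_y(R\eps_2)=\emptyset$. First I would make this identification explicit in one line, and then apply \eqref{sCN1conv2}, which is legitimate since $\eps_2\leq\bar\eps$ and since $y$ satisfies \eqref{yconstants} (which is what ties the corollary to the assumptions of the theorem). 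This yields
\[
	\eps_2^\beta E^{CN}_{\eps_2}(y)\leq A\abs{P_y(R\eps_2)}=A\cdot 0=0,
\]
whence $E^{CN}_{\eps_2}(y)=0$, as $\eps_2^\beta>0$.

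I expect no real obstacle: the only point worth noting is that the content of Theorem~\ref{thm:cnsoft}'s proof already does the essential work. Indeed, one could equivalently argue from step (iv) of that proof, where it is shown that $J_{y,\eps_2}(x)=0$ whenever $x\notin P_y(R\eps_2)$ and $\eps_2\leq\bar\eps$; under the present hypothesis this is every $x\in\Omega$, and the conclusion again follows by integration. Either route gives a one-line corollary, and I would opt for the first because it makes the role of the upper bound \eqref{sCN1conv2} as a robust black box transparent.
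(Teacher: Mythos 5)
Your proposal is correct and coincides with the paper's own proof, which likewise observes that \eqref{corwECNv-1} is exactly the statement $P_y(R\eps_2)=\emptyset$ and then invokes the upper bound \eqref{sCN1conv2}. Nothing is missing.
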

\begin{proof}
This is a direct consequence of \eqref{sCN1conv2} and the definition of $P_y(R\eps_2)$: \eqref{corwECNv-1} implies that $P_y(R\eps_2)=\emptyset$.
\end{proof}
Another interesting consequence of Theorem~\ref{thm:cnsoft} is
\begin{cor}[Global invertibility for finite $\eps_2$]\label{cor:invertibility}
Suppose that the assumptions of Theorem~\ref{thm:cnsoft} hold and let $C>0$.
If $\beta>d$ in \eqref{sCN1}
then there exists a constant $0<\tilde{\eps}\leq \bar\eps$ which only depends on $\beta$, $C$, $d$, $\Omega$, $\delta$, $\alpha$, $M_1$, $M_2$ and $g$,
such that for all $\eps_2<\tilde{\eps}$ and all $y\in C^{1,\alpha}(\Omega;\RR^d)$ satisfying \eqref{yconstants}, 
\begin{align}\label{yECN-bound}
	E^{CN}_{\eps_2}(y)\leq C\quad\text{implies that $y$ is globally injective.}
\end{align}
\end{cor}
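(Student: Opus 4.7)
The plan is to argue by contraposition: if $y$ fails to be globally injective, then combined with the assumption $\beta>d$, the lower bound \eqref{sCN1conv1} from Theorem~\ref{thm:cnsoft} forces $E^{CN}_{\eps_2}(y)\to\infty$ as $\eps_2\to 0$, and in particular exceeds any prescribed bound $C$ once $\eps_2$ is sufficiently small.

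Suppose $y$ is not globally injective, and pick distinct points $x_1,x_2\in\Omega$ with $y(x_1)=y(x_2)$. Lemma~\ref{lem:biLi} (local injectivity on every $\Omega(\bar x,\varrho)$) immediately forces $\abs{x_1-x_2}\geq\varrho$. The key step is to thicken $\{x_1\}$ into a whole ball of points lying in $P_y(r\eps_2)$. Set $\eta:=c_0\eps_2$ with $c_0:=r/(M_1\sqrt{1+L^2})$. By the upper bi-Lipschitz bound in \eqref{lemIMT-biLi} applied on $\Omega(x_1,\varrho)$, every $\tilde x\in\Omega(x_1,\eta)$ satisfies
\[
\abs{y(\tilde x)-y(x_2)}=\abs{y(\tilde x)-y(x_1)}\leq M_1\sqrt{1+L^2}\,\eta=r\eps_2,
\]
while $\abs{\tilde x-x_2}\geq\abs{x_1-x_2}-\eta\geq\varrho-\eta>\tfrac{\varrho}{2}$ as soon as $\eta<\tfrac{\varrho}{2}$, which holds for $\eps_2$ small. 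Taking $\tilde x$ as the witness in the definition of $P_y(r\eps_2)$, this shows $\Omega(x_1,\eta)\subset P_y(r\eps_2)$.

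Next, because $\Omega$ is a Lipschitz domain it satisfies a uniform interior cone condition, so there exists a geometric constant $c_\Omega>0$ (depending only on $\Omega$) such that $\abs{\Omega(x,t)}\geq c_\Omega t^d$ for every $x\in\bar\Omega$ and every sufficiently small $t>0$; this is exactly the positive volume-fraction bound already used at the end of step (iii) in the proof of Theorem~\ref{thm:cnsoft}. Hence
\[
\abs{P_y(r\eps_2)}\geq c_\Omega\eta^d=c_\Omega c_0^d\,\eps_2^d,
\]
and feeding this into \eqref{sCN1conv1} gives
\[
E^{CN}_{\eps_2}(y)\geq a\,c_\Omega c_0^d\,\eps_2^{d-\beta}.
\]

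Because $\beta>d$ the exponent $d-\beta$ is strictly negative, so the right-hand side diverges as $\eps_2\to 0$. I would then pick $\tilde\eps\in(0,\bar\eps]$ small enough that simultaneously $c_0\tilde\eps<\varrho/2$ and $a\,c_\Omega c_0^d\,\tilde\eps^{d-\beta}>C$; each constant entering this choice depends only on $\beta,C,d,\Omega,\delta,\alpha,M_1,M_2,g$, as required by the statement, since $r,a,\bar\eps$ from Theorem~\ref{thm:cnsoft} and $\varrho,L$ from Lemma~\ref{lem:biLi} are uniform over the admissible class \eqref{yconstants}. For every $\eps_2<\tilde\eps$ the lack of global injectivity would then contradict $E^{CN}_{\eps_2}(y)\leq C$, proving the contrapositive. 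The only point requiring mild care is precisely this uniformity bookkeeping; no nontrivial obstacle is expected, since the thickening argument is local around a single pair $(x_1,x_2)$ and uses only the bi-Lipschitz estimate already available in Lemma~\ref{lem:biLi}.
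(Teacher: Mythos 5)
Your proposal is correct and follows essentially the same route as the paper's own proof: argue by contradiction, use Lemma~\ref{lem:biLi} to get $\abs{x_1-x_2}\geq\varrho$, thicken the self-intersection point into a set of measure $\gtrsim\eps_2^d$ inside $P_y(r\eps_2)$ via the upper Lipschitz bound and the interior cone condition, and then let the $\eps_2^{-\beta}$ prefactor in \eqref{sCN1conv1} beat $\eps_2^d$ since $\beta>d$. The only cosmetic difference is that the paper thickens around both $x_1$ and $x_2$ (gaining a harmless factor of $2$), whereas you use only one of them.
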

\begin{proof}
We will prove \eqref{yECN-bound} indirectly.
Suppose that $y$ is not globally injective, i.e., $y(x_1)=y(x_2)$ for a pair of points $x_1,x_2\in\Omega$, $x_1\neq x_2$.
In view of \eqref{sCN1conv1}, 
it suffices to show that then
\begin{align}\label{corGI-1}
	\eps_2^{-\beta} a \abs{P_y(r\eps_2)}>C\quad\text{for all $\eps_2<\tilde{\eps}$}
\end{align}
with a suitable choice of $\tilde{\eps}>0$.
We claim that
\begin{align}\label{corGI-2}
	\abs{P_y(r\eps_2)}\geq c \eps_2^d\quad\text{for all $\eps_2\leq \hat{\eps}$}
\end{align}
with constants $\hat{\eps}>0$, $c>0$ yet to be determined.
From \eqref{corGI-2}, we immediately
get \eqref{corGI-1} with $\tilde{\eps}:=\min\big\{\hat{\eps},\big(\frac{ca}{C}\big)^{\frac{1}{\beta-d}}\big\}>0$. 

To prove \eqref{corGI-2},
first notice that as a Lipschitz domain, $\Omega$ satisfies an interior cone condition, i.e.,
there is a (cut off) cone of the form 
\[
	V=B_\mu(0)\cap \{z\in\RR^d\mid z\cdot e>\nu \abs{z}\}
\]
(with a fixed unit vector $e\in \RR^d$ and constants $\nu<1$, $\mu>0$)
which only depends on $\Omega$ such that
for each $x\in\Omega$, $x+Q V\subset \Omega$ with a suitable rotation $Q=Q(x)\in SO(d)$.
In particular, there is $Q_1,Q_2\in SO(d)$ such that
$x_1+Q_1 V\subset \Omega$ and $x_2+Q_2 V\subset \Omega$.
By Lemma~\ref{lem:biLi}, we know that $\abs{x_1-x_2}\geq \varrho$, and therefore $x_1,x_2\in P_y(0)$.
By the local Lipschitz continuity \eqref{lemIMT-biLi} of $y$ with constant $M_1 \sqrt{1+L^2}$ 
and the definition of $P_y(r\eps_2)$ in Theorem~\ref{thm:cnsoft}, 
we see that as a consequence, for $j=1,2$,
\begin{align}\label{corGI-3}
  (x_j+Q_j V)\cap B_{\lambda \eps_2}(x_j)~\subset~ P_y(r\eps_2),
	\quad\text{with }\lambda:=\frac{r}{M_1 \sqrt{1+L^2}},
\end{align}
provided that $\eps_2\leq \bar\eps$ and $\lambda \eps_2\leq \varrho$. 
If $\lambda \eps_2\leq \frac{\varrho}{2}$, we also know that $B_{\lambda \eps_2}(x_1)$ and $B_{\lambda \eps_2}(x_2)$ are disjoint.
Since $\abs{(x_j+Q_j V)\cap B_{\lambda \eps_2}(x_j)}=\frac{\abs{V}}{\abs{B_\mu(0)}}(\lambda r)^d$ as long as $\lambda \eps_2\leq \mu$,
\eqref{corGI-3} entails \eqref{corGI-2}
with $c:=2\frac{\abs{V}}{\abs{B_\mu(0)}}\lambda^d$, for all 
$\eps_2<\hat{\eps}:=\min\big\{\bar{\eps},\frac{\varrho}{2\lambda},\frac{\mu}{\lambda}\big\}$.
\end{proof}
\begin{rem}
The proof of Corollary~\ref{cor:invertibility} also works if $x_1,x_2\in \partial\Omega$, and we take $y(x_1)$ and $y(x_2)$ as the the 
uniquely determined values of the continuous extension of $y\in C^{1,\alpha}$ to $\bar\Omega$. Hence, self-contact on the surface is also prevented for all $\eps_2$ small enough. In fact, one can see with similar arguments that whenever 
$\beta>d$, a universal bound on the penalty term $E^{CN}_{\eps_2}(y)$ as in \eqref{yECN-bound} even enforces
a positive minimal distance between different pieces of the body's surface (different in the sense that they are not closer than the radius $\varrho$ of local invertibility in the reference configuration). This minimal distance converges to zero as $\eps_2\to 0$.
\end{rem}

In the final piece of this section, we discuss the stability of $E^{CN}_{\eps_2}(y)$ with respect to perturbations in $y$. 
For fixed $\eps_2$, $E^{CN}_{\eps_2}$ is obviously continuous in $L^\infty$, but 
that continuity is not uniform in the limit $\eps_2\to 0$.
From Theorem~\ref{thm:cnsoft} and the definition of the sets $P_y(r\eps_2)$, $P_y(R\eps_2)$ 
we can infer that 
$E^{CN}_{\eps_2}(y)$ does not change too much if $y$ is replaced by some perturbed deformation $z$ 
with $\norm{y-z}_{L^\infty}\leq \frac{r}{3}\eps_2$, because then 
$P_{y}(\frac{r}{3}\eps_2)\subset P_{z}(r\eps_2) \subset P_{y}(\frac{5}{3}r\eps_2)$ (and 
similar inclusions also hold with $R$ instead of $r$). Here, $z$ of course may depend on $\eps_2$. 
However, it is important to be able to handle also perturbations that are small but not controlled by $\eps_2$: 
\begin{prop}\label{prop:CNstab}
In the situation of Theorem~\ref{thm:cnsoft}, 
suppose that $y,z\in C^{1,\alpha}(\Omega;\RR^d)$ both satisfy \eqref{yconstants}. 
Then for every $0<\gamma<\frac{\varrho}{2}$ (with $\varrho$ from Lemma~\ref{lem:biLi}), there exists a constant $\lambda>0$ 
such that
\begin{align}\label{pCNs-1}
	\tilde{P}^{(\gamma)}_y(0)
	\subset
	P_z(0)\quad\text{if $\norm{y-z}_{L^\infty}\leq \lambda$,}
\end{align}
where 
\[
	\tilde{P}^{(\gamma)}_y(0):=\mysetr{x_1\in\Omega}{
	\begin{aligned}[c]
	&\text{$\exists x_2\in \Omega$ with $\dist{x_2}{\partial\Omega}>\gamma$},\\
	&\text{$y(x_1)=y(x_2)$ and $\abs{x_1-x_2}>\tfrac{\varrho}{2}+\gamma$}
	\end{aligned}
	}.
\]
Here, $\lambda$ may depend on $\gamma$ and the constants appearing in Theorem~\ref{thm:cnsoft} but not on $y$, $z$ or $\eps_2$.
\end{prop}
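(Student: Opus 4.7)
The plan is to produce, for any given $x_1 \in \tilde{P}^{(\gamma)}_y(0)$, an explicit partner $x_2' \in \Omega$ witnessing that $x_1 \in P_z(0)$, by locally inverting $z$ near the $y$-partner $x_2$ of $x_1$. First I fix $x_1\in \tilde{P}^{(\gamma)}_y(0)$ together with a witness $x_2\in\Omega$ satisfying $y(x_1)=y(x_2)$, $\dist{x_2}{\partial\Omega}>\gamma$ and $\abs{x_1-x_2}>\tfrac{\varrho}{2}+\gamma$. Since $\gamma<\tfrac{\varrho}{2}$, the ball $B_\gamma(x_2)$ is contained in $\Omega\cap B_{\varrho/2}(x_2)$, so Lemma~\ref{lem:biLi} applied to $z$ yields the lower bi-Lipschitz bound $\abs{z(u)-z(v)}\geq c\abs{u-v}$ on $B_\gamma(x_2)$ with the explicit constant $c:=\tfrac{\delta}{2M_1^{d-1}}$, and in particular $z$ is injective there.

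The crux of the argument is to show that the image $z(B_\gamma(x_2))$ contains the entire Euclidean ball $B_{c\gamma}(z(x_2))$. I would obtain this by invariance of domain: $z$ restricted to $B_\gamma(x_2)$ is a continuous injection into $\RR^d$, hence the image is open, and its topological boundary is contained in $z(\partial B_\gamma(x_2))$. The lower Lipschitz bound forces $z(\partial B_\gamma(x_2))$ to lie outside $B_{c\gamma}(z(x_2))$, so a connectedness argument places $B_{c\gamma}(z(x_2))$ entirely inside the open set $z(B_\gamma(x_2))$. I expect this topological step to be the main (though minor) obstacle; if one wants to avoid invariance of domain, the same conclusion can be produced by a direct degree computation since $\det\nabla z>0$ on $B_\gamma(x_2)$, or by following a straight segment from $z(x_2)$ and using compactness of $z(\overline{B_\gamma(x_2)})$ together with the boundary separation.

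With this image inclusion in hand, the triangle inequality combined with $y(x_1)=y(x_2)$ gives
\begin{equation*}
\abs{z(x_1)-z(x_2)}\leq \abs{z(x_1)-y(x_1)}+\abs{y(x_2)-z(x_2)}\leq 2\lambda,
\end{equation*}
so choosing any $\lambda<\tfrac{c\gamma}{2}$ (a threshold depending only on $\gamma$, $\delta$ and $M_1$, hence on the constants already fixed in Theorem~\ref{thm:cnsoft}) places $z(x_1)$ inside $B_{c\gamma}(z(x_2))\subset z(B_\gamma(x_2))$. Consequently there is some $x_2'\in B_\gamma(x_2)\subset \Omega$ with $z(x_2')=z(x_1)$.

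To finish, I verify the separation condition required by $P_z(0)$: since $\abs{x_2-x_2'}<\gamma$, the reverse triangle inequality gives
\begin{equation*}
\abs{x_1-x_2'}\geq \abs{x_1-x_2}-\abs{x_2-x_2'}>\Big(\tfrac{\varrho}{2}+\gamma\Big)-\gamma=\tfrac{\varrho}{2}>0,
\end{equation*}
so $x_2'\neq x_1$ and $x_1\in P_z(0)$, proving the inclusion \eqref{pCNs-1}. The constant $\lambda$ constructed this way depends only on $\gamma$ and the parameters $\delta,M_1,M_2,\alpha,\Omega$ controlling $\varrho$, and in particular is independent of the specific $y$, $z$ and of $\eps_2$, as required.
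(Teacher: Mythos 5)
Your proof is correct and follows essentially the same route as the paper's: both arguments fix $x_1\in\tilde{P}^{(\gamma)}_y(0)$ with a witness $x_2$, apply Lemma~\ref{lem:biLi} near $x_2$ to obtain a local bi-Lipschitz bound for $z$ with constants independent of $y,z,\eps_2$, conclude that $z(B_\gamma(x_2))$ contains a ball of radius proportional to $\gamma$ around $z(x_2)$, and then use $y(x_1)=y(x_2)$ plus $\norm{y-z}_{L^\infty}\leq\lambda$ to place $z(x_1)$ inside that ball and extract the new partner $x_2'$. The only cosmetic difference is that you justify the inclusion $B_{c\gamma}(z(x_2))\subset z(B_\gamma(x_2))$ via invariance of domain (or, alternatively, degree theory using $\det\nabla z>0$), whereas the paper takes this step for granted as part of the bi-Lipschitz property; making that step explicit is a genuine, if minor, improvement in rigor and does not change the structure of the argument.
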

\begin{rem}\label{rem:CNstab}
Since $P_z(0)\subset P_z(r\eps_2)$, \eqref{pCNs-1} and \eqref{sCN1conv1} in particular imply that
\begin{align}\label{pCNs-2}
	E^{CN}_{\eps_2}(z) 
	\geq a \eps_2^{-\beta} \absn{\tilde{P}^{(\gamma)}_y(0)}\quad\text{if $\norm{y-z}_{L^\infty}\leq \lambda$}.
\end{align}
Moreover, $\tilde{P}^{(\gamma)}_y(0)$ is always an open set (because if $y(x_1)=y(x_2)$, then $y$ also self-intersects
on whole neighborhoods of $x_1$ and $x_2$ since $y$ is locally bi-Lipschitz due to Lemma~\ref{lem:biLi}). 
Therefore, whenever $P_y(0)\neq \emptyset$ 
we can find $\gamma=\gamma(y)>0$ such that $P^{(\gamma)}_y(0)\neq \emptyset$ and thus
$\absn{P^{(\gamma)}_y(0)}>0$, and the right hand side of the inequality in \eqref{pCNs-2} then blows up as $\eps_2\to 0$. 
Hence, only deformations $y$ with $P_y(0)=\emptyset$ (i.e., $y$ is globally invertible) can be reached in the limit along a sequence for which $E^{CN}_{\eps_2}$ remains bounded.
\end{rem}
\begin{proof}[Proof of Proposition~\ref{prop:CNstab}]
Let $x_1\in \Omega$ with $x_1\in \tilde{P}^{(\gamma)}_y(0)$.
By definition of $\tilde{P}^{(\gamma)}_y(0)$, there exists $x_2\in \Omega$ with $\dist{x_2}{\partial\Omega}>\gamma$, 
$y(x_1)=y(x_2)$ and $\abs{x_1-x_2}>\frac{\varrho}{2}+\gamma$.
Both $y$ and $z$ are locally bi-Lipschitz due to Lemma~\ref{lem:biLi}, 
and the constants explicitly given in \eqref{lemIMT-biLi} do not depend on $y$ or $z$.
Hence, a whole neighborhood of $y(x_1)=y(x_2)$ is contained in $y(\Omega)$.
More precisely, 
\[
	B_{\tau}(y(x_2))\subset y(B_\gamma(x_2)) \subset y(\Omega)\quad
	\text{where $\tau:=\frac{\gamma}{L_{y^{-1}}}$}.
\]
Here, $L_{y^{-1}}\geq 1$ can be any Lipschitz constant of the local inverse $y^{-1}$ of $y$ near $x_2$, for instance
$L_{y^{-1}}:=\max\big\{1,\tfrac{2 M_1^{d-1}}{\delta}\big\}$ is admissible by \eqref{lemIMT-biLi}, and this particular choice is also independent of $x_2$ and $y$.
Analogously, 
\[
	B_{\tau}(z(x_2))\subset z(B_\gamma(x_2)).
\]
Therefore,
for every $z$ with $\abs{y(x_i)-z(x_i)}<\gamma:=\frac{1}{2}\tau$, $i=1,2$, we obtain that
\[
	z(x_1)\in 
	B_\gamma(y(x_2))
	\subset 
	B_{\tau}(z(x_2))
	\subset z(B_\gamma(x_2)).
\]
This implies that $x_1\in P_z(0)$: There exists $\tilde{x}_2\in B_\gamma(x_2) \subset \Omega$ such that
$z(x_1)=z(\tilde{x}_2)$ and $\absn{x_1-\tilde{x}_2}\geq \abs{x_1-x_2}-\gamma > \frac{\varrho}{2}$.
\end{proof}

\section{Convergence of energies\label{sec:main}}

In this section, for $y\in W^{1,p}(\Omega;\RR^d)$,
we prove that in the limit as $\eps=(\eps_1,\eps_2)\to 0$, the penalized energy
\begin{align*}
	E_{\eps,\sigma}(y)=\left\{\begin{alignedat}[c]{2}
	&E^{el}_{\eps_1}(y)	+E^{reg}_{\sigma}(y)+E^{CN}_{\eps_2}(y)\quad && \text{if $y\in W^{2,s}$},\\
		&+\infty && \text{else,}
\end{alignedat}\right.	
\end{align*} 
with
\[
	E^{el}_{\eps_1}(y):= \int_\Omega W_{\eps_1}(x,\nabla y)\,dx,
\]
converges to the original energy
\begin{align*}
	E_{\sigma}(y)=\left\{\begin{alignedat}[c]{2}
	  &E^{el}(y) +E^{reg}_{\sigma}(y)\quad && \text{if $y\in W^{2,s}$ and \eqref{ciarletnecas} holds},\\
		&+\infty && \text{else,}
	\end{alignedat}\right.
\end{align*} 
which includes the Ciarlet-Ne\v{c}as condition \eqref{ciarletnecas} as a built-in constraint.
Here, recall that
\[
	E^{el}_{\eps_1}(y)= \int_\Omega W_{\eps_1}(x,\nabla y)\,dx,\quad
	E^{el}(y)=\int_\Omega W(x,\nabla y)\,dx.
\]
In addition, we also consider the convergence of discrete Galerkin approximations. 
For that, let
$h>0$ (typically a mesh size) and let $Y_h$ denote associated finite dimensional subspaces of 
$(W^{2,s}\cap W^{1,p})(\Omega;\RR^d)$ (typically $Y_h\subset W^{2,\infty}$) such that
the maximal approximation error $\cE(h)$ satisfies
\begin{align}\label{fespaceapprox}
	\cE(h):=\sup_{y\in W^{2,s}} \inf_{y_h\in Y_h}\big(\{\norm{y-y_h}_{W^{2,s}\cap W^{1,p}}\big)
	\underset{h\to 0}{\To} 0.
\end{align}

The corresponding finite dimensional approximations of $E_{\eps,\sigma}$ are
\begin{align*}
	E^h_{\eps,\sigma}(y):=\left\{\begin{alignedat}[c]{2}
	&E^{el}_{\eps_1}(y)+E^{reg}_{\sigma}(y)+E^{CN}_{\eps_2}(y)\quad && \text{if $y\in Y_h$},\\
		&+\infty && \text{else,}
\end{alignedat}\right.	
\end{align*} 
As defined, $E^h_{\eps,\sigma}$ is assumed to be exact on $Y_h$. In this context, we will not discuss the question of how to of calculate the integrals in $E^h_{\eps,\sigma}$ in practice. The easiest possible approach is of course based on additional approximations using standard methods in numerical integration. For our analysis, additional errors terms that might appear at this stage
do not matter as long as they still converge to zero as $(h,\eps)\to 0$. 
However, it is useful to optimize the evaluation of the double integral in $E^{CN}_{\eps_2}$ for performance reasons, since only small neighborhoods of the self-contact set (or any almost self-contact) actually contribute.
\begin{rem}
Artificially assigning the value $+\infty$ in the definitions of the functionals 
is just a way of encoding a restricted class of admissible functions. 
Be warned that there are still other ``inadmissible'' deformations with infinite energy
in case of $E_{\sigma}$, namely any $y\in W^{2,s}\cap W^{1,p}$ for which
$\int_\Omega W(x,\nabla y)\,dx=+\infty$ because $\det\nabla y$ 
is too close to zero or even non-positive on a non-negligible set. 
\end{rem}
\begin{rem}[additional force terms]  \label{rem:forces} 
As already briefly mentioned, we did not add any terms corresponding to exterior forces, but only to keep the notation short. Since 
we actually prove $\Gamma$-convergence, our results are stable with respect to the addition of any term that is continuous with respect to 
the topology used for the states in the $\Gamma$-limit (see \cite{Dal93B}, e.g.). For us, that is the weak topology of $W^{2,s}$
(or the weak topology of $W^{1,p}$, which is a weaker topology but still leads to the same result for fixed $\sigma>0$). 
Continuous pertubations in the weak topology of $W^{1,p}$ in particular include linear body force terms like
\begin{align}\label{forces-body}
	\int_\Omega y\cdot g_{\rm body}\,dx,\quad \text{with a }g_{\rm body}\in L^1(\Omega;\RR^d).
\end{align} 
Similarly, one can add linear boundary force terms like
\begin{align}\label{forces-surface}
	\int_{\partial\Omega} y\cdot g_{\rm surface}\,d\cH^{d-1}(x),\quad \text{with a }g_{surface}\in L^1(\partial\Omega;\RR^d),
\end{align} 
where the space $L^1$ on $\partial\Omega$ is understood with respect to the Hausdorff measure (surface measure) $\cH^{d-1}$.
Moreover, since $p>d$ and $\Omega\subset \RR^d$ is Lipschitz, $W^{1,p}(\Omega)$ is compactly embedded into $C(\bar\Omega)$. Due to this compact embedding, 
any nonlinear force terms that are continuous on $C(\bar\Omega;\RR^d)$ or $C(\partial\Omega;\RR^d)$ are allowed as well, 
like
\begin{align*}
	&\int_\Omega G_{\rm body}(x,y)\,dx,\quad \text{with a $G_{\rm body}\in C(\bar\Omega\times \RR^d)$, or}\\
	&\int_{\partial\Omega} G_{\rm surface}(x,y)\,d\cH^{d-1}(x),\quad \text{with a $G_{\rm surface}\in C(\partial\Omega\times \RR^d)$}.
\end{align*} 
Finally, we could exploit the added regularity in form of terms that are weakly continuous in $W^{2,s}$, which allows even bulk and boundary terms involving $\nabla y$.
\end{rem}
\begin{rem}[boundary conditions] \label{rem:bc}
We also did not add any explicit boundary condition so far. Still, a weak form of a natural Neumann type boundary condition 
with the outer normal $\nu$ to $\partial\Omega$
is built in on all pieces of the boudary $\Lambda_N\subset \partial\Omega$ 
where $y$ is not subject to explicit other boundary conditions (if any), e.g.:
\[
	\frac{\sigma}{s}\abs{D^2y}^{s-2}D^2y:(\nu\otimes \nu) + D_F W(x,\nabla y)\cdot\nu+g_{surface}=0\quad\text{on $\Lambda_N$}.
\]
Here, we assumed that exactly one surface term was added to the energy, namely \eqref{forces-surface}.
Dirichlet conditions on $\Lambda_D$, the rest of the boundary, could be added. The limit of $E^{CN}_{\eps_2}$ is not directly affected by that
since the results of Section~\ref{sec:cn} obviously also hold for any restricted class of states.
Still, extra efforts in the proof of Theorem~\ref{thm:convergence} (ii) below would be required to make sure that the Dirichlet condition is always respected when we manipulate states. 
The extra requirements for the boundary data that would be needed then are the following:
If we impose
\[
	y=y_0~~\text{on $\Lambda_D$},~~\text{with $\Lambda_D\subset \partial \Omega$ relatively open,}
\]
the given boundary data $y_0:\Lambda_D\to \RR^d$ must have an extension to a state $y_0\in W^{2,s}(\Omega;\RR^d)$ 
which is far enough from any self-penetration so that
\begin{enumerate}
\item[(i)] $E_\sigma(y_0)<+\infty$;
\item[(ii)] $E^{CN}_{\eps_2}(y_0)\to 0$ as $\eps_2\to 0$. 
\end{enumerate}
In particular, $y_0$ must satisfy the Ciarlet-Ne\v{c}as condition \eqref{ciarletnecas}, and if $\beta>d$ (recall that $\beta$ is the parameter formally governing the blow-up rate of $E^{CN}_{\eps_2}$), $y_0$ must not have self-contact on the boundary, cf.~Corollary~\ref{cor:invertibility}.
\end{rem}
\begin{rem}
In their basic form without additional terms, $E_{\eps,\sigma}$ and $E_{\sigma}$ are translation invariant, i.e.,
constant vectors can be added to $y$ without changing the energy.
In particular, $E_{\eps,\sigma}$ and $E_{\sigma}$ are only coercive when these constants are removed.
This can be easily achieved by working in the quotient space $W^{2,s}(\Omega;\RR^d)/\RR^d$ or $W^{1,p}(\Omega;\RR^d)/\RR^d$. Alternatively, if translation invariance is broken by 
boundary conditions or additional terms in the energy, it suffices if these somehow fix the constant (e.g., by a Dirichlet condition) or control it (e.g., by a coercive nonlinear force term).
\end{rem}
For fixed $\eps$ and $h$, we always have the existence of an energy minimizer:
\begin{prop}[existence of minimizers]
Let $\sigma>0$ be fixed and suppose that \eqref{W0}--\eqref{W2}, \eqref{Weps} and \eqref{pqsd} hold. Then for every fixed  
$h,\eps_1,\eps_2>0$,
$E_{\eps,\sigma}$ and $E^h_{\eps,\sigma}$ attain their minima in $W^{2,s}$ and $Y_h\subset W^{2,s}$, respectively.
\end{prop}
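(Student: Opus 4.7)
The plan is to apply the direct method of the calculus of variations to each of the two functionals, relying on the coercivity supplied by $E^{reg}_\sigma$ and the lower bound in \eqref{Weps} for the elastic term, on polyconvexity for weak lower semicontinuity, and on the compact embedding $W^{1,p}(\Omega;\RR^d)\hookrightarrow C(\bar\Omega;\RR^d)$ (valid since $p>d$) for the continuity of $E^{CN}_{\eps_2}$. As noted in the remark preceding the proposition, $E_{\eps,\sigma}$ is translation invariant, so I will work in the quotient by constants (or equivalently, among deformations normalized by $\int_\Omega y\,dx=0$); the same proof goes through for any of the alternative normalizations mentioned (Dirichlet data, coercive force term, etc.).

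First I would take a minimizing sequence $y_n$ for $E_{\eps,\sigma}$. Assumption \eqref{Weps} provides the bound $W_{\eps_1}(x,F)\ge c_3\abs{F}^p-c_4-c_3\eps_1^{-q}$, so $E^{el}_{\eps_1}(y_n)\ge c_3\norm{\nabla y_n}_{L^p}^p-C(\eps_1,\Omega)$, while $E^{reg}_\sigma(y_n)\ge \sigma\norm{D^2 y_n}_{L^s}^s$ and $E^{CN}_{\eps_2}(y_n)\ge 0$. Combined with the normalization, this yields $\sup_n\norm{y_n}_{W^{2,s}\cap W^{1,p}}<\infty$. Since $W^{2,s}$ is reflexive and the embeddings $W^{2,s}\hookrightarrow W^{1,p}\hookrightarrow C(\bar\Omega)$ are compact (both $s>d$ and $p>d$), a subsequence (not relabeled) converges weakly in $W^{2,s}$, weakly in $W^{1,p}$ and uniformly on $\bar\Omega$ to some $y^*\in W^{2,s}$.

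Next I would verify lower semicontinuity term by term along this subsequence. The term $E^{el}_{\eps_1}$ is weakly lower semicontinuous on $W^{1,p}$ by the classical polyconvexity theorem of Ball, using \eqref{Weps} (Carath\'eodory, polyconvex, bounded below in the required way). The term $E^{reg}_\sigma$ is weakly lower semicontinuous on $W^{2,s}$ by convexity of $F\mapsto\abs{F}^s$ (or more generally by assumption (iii) of the remark on $E^{reg}_\sigma$). Finally, for fixed $\eps_2>0$ the integrand of $E^{CN}_{\eps_2}$ is uniformly bounded by $\eps_2^{-\beta-d}g(\diam\Omega)$ and is continuous in $(y(x),y(\tilde x))$, so uniform convergence $y_n\to y^*$ together with dominated convergence gives $E^{CN}_{\eps_2}(y_n)\to E^{CN}_{\eps_2}(y^*)$; in particular $E^{CN}_{\eps_2}$ is weakly continuous on $W^{1,p}$ for fixed $\eps_2>0$. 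Summing, $E_{\eps,\sigma}(y^*)\le\liminf_n E_{\eps,\sigma}(y_n)=\inf E_{\eps,\sigma}$, and $y^*$ is the sought minimizer.

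The finite-dimensional case $E^h_{\eps,\sigma}$ on $Y_h$ is strictly easier: a minimizing sequence is bounded in $Y_h$ by the same coercivity estimate, and since $Y_h$ is finite-dimensional, all norms are equivalent and bounded sets are precompact, so a subsequence converges in $Y_h$ (in particular in $W^{2,s}\cap W^{1,p}$). The three lower semicontinuity statements above apply verbatim, yielding a minimizer in $Y_h$. I do not anticipate a serious obstacle; the only subtlety worth flagging is the continuity of the nonlocal penalty $E^{CN}_{\eps_2}$, which is handled cleanly once one observes that $\eps_2>0$ is fixed and the integrand is bounded, so that $W^{1,p}\hookrightarrow C(\bar\Omega)$ reduces the question to dominated convergence.
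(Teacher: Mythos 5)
Your proposal is correct and follows the same direct-method strategy as the paper: coercivity from $E^{reg}_\sigma$ and the lower bound in \eqref{Weps}, weak subsequential compactness in $W^{2,s}$ (modulo constants), and term-by-term lower semicontinuity. The one small divergence is in the treatment of $E^{el}_{\eps_1}$: you invoke Ball's polyconvexity theorem for weak lower semicontinuity in $W^{1,p}$, whereas the paper observes that $W^{2,s}$ (with $s>d$) embeds compactly into $W^{1,\infty}$, so that—$W_{\eps_1}$ being everywhere finite and Carath\'eodory—$E^{el}_{\eps_1}$ is in fact weakly \emph{continuous} along the minimizing sequence; no lower-semicontinuity machinery is needed for that term. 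Both arguments are valid; the paper's route is marginally more economical because it reuses the same compact embedding that you already use for $E^{CN}_{\eps_2}$.
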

\begin{proof}
All three summands of $E_{\eps,\sigma}$ (or $E^h_{\eps,\sigma}$) are weakly lower semicontinuous in $W^{2,s}$:
$E^{reg}$ is convex and thus weakly lower semicontinuous.
The other two terms are even weakly continuous, because $W^{2,s}$ (or its closed subspace $Y_h$) is compactly embedded in $W^{1,\infty}$ and $L^\infty$,
$y\mapsto \int_\Omega W_{\eps_1}(x,\nabla y)\,dx$ is continuous in $W^{1,\infty}$
and $y\mapsto E^{CN}_{\eps_2}(y)$ is continuous in $L^{\infty}$. 
Due to the definition of $E^{reg}$ and the lower bounds for $W$ and $W_\eps$, $E_{\eps,\sigma}$ and $E^h_{\eps,\sigma}$ are also coercive 
with respect to the semi-norm $\norm{y}:=\norm{D^2 y}_{L^s}+\norm{D y}_{L^p}$ on
$W^{2,s}$, which by Poincar\'e's inequality 
is a norm on the quotient space $W^{2,s}/\RR^d$ where functions differing only up to an additive constant vector are considered equivalent. 
(The quotient space is only needed when the translation invariance of the energy is not broken by boundary conditions or force terms.)
Hence, we get the existence of minimizers by the direct method of the calculus of variations.
\end{proof}
Our main results provides convergence of $E^h_{\eps,\sigma}$
and its minimum as $(h,\eps)\to 0$:
\begin{thm}\label{thm:convergence}
Let $\sigma>0$ be fixed and suppose that \eqref{W0}--\eqref{W2}, \eqref{Weps} and \eqref{pqsd} hold.
Then 
for every $(h(k),\eps(k))=(h(k),\eps_1(k),\eps_2(k))\in (0,\infty)^3$, 
$k\in\NN$, with 
$h(k)\to 0$, $\eps_1(k)\to 0$ and $\eps_2(k)\to 0$ as $k\to \infty$, we have the following two properties for all
$y\in W^{2,s}(\Omega;\RR^d)$:
\begin{enumerate}
\item[(i)] For every sequence $y_k\rightharpoonup y$ in $W^{2,s}$ (weakly),
\[
	\liminf_{k\to\infty} E_{\eps(k),\sigma}^{h(k)}(y_k)\geq E_{\sigma}(y);
\]
\item[(ii)] there exists a sequence $y_k\to y$ in $W^{2,s}$ (strongly) such that
\[
	\lim_{k\to\infty} E_{\eps(k),\sigma}^{h(k)}(y_k)= E_{\sigma}(y).
\]
\end{enumerate}
This also remains true for the case $h=0$ 
if we define $E^0_{\eps,\sigma}:=E_{\eps,\sigma}$.
\end{thm}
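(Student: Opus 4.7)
My plan is to establish the two halves of this $\Gamma$-convergence statement separately, handling the case $h=0$ by simply omitting the discrete projection step throughout.

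For the liminf inequality (i), I may assume $\liminf_k E^{h(k)}_{\eps(k),\sigma}(y_k)<\infty$ along a subsequence, so that each of the three summands stays bounded. The bound on $E^{reg}_\sigma$ gives $\norm{y_k}_{W^{2,s}}\leq C$, and the compact embedding $W^{2,s}\hookrightarrow C^{1,\alpha'}$ (valid since $s>d$) yields, after extraction, $y_k\to y$ in $C^{1,\alpha'}$; in particular $\nabla y_k\to \nabla y$ uniformly on $\Omega$. Combining the lower bound in \eqref{Weps} with the second-gradient control and \cite{HeaKroe09a} (via Proposition~\ref{prop:yEbounded}) produces uniform constants $\delta,M_1,M_2$ such that both $y_k$ and $y$ satisfy \eqref{yconstants}. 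Weak lower semicontinuity of $E^{reg}_\sigma$ is immediate from convexity; for the elastic part, eventually $\abs{\nabla y_k}\leq 1/\eps_1(k)$ and $\det\nabla y_k\geq \eps_1(k)$, so the third clause of \eqref{Weps} together with uniform convergence of $\nabla y_k$ and continuity of $W$ on the relevant compact set in fact yields $E^{el}_{\eps_1(k)}(y_k)\to E^{el}(y)$. It then remains to show that $y$ satisfies \eqref{ciarletnecas}. I would argue by contradiction: if $y$ were not globally injective, then $P_y(0)\neq\emptyset$, and by Remark~\ref{rem:CNstab} there is a $\gamma>0$ with $\abs{\tilde{P}^{(\gamma)}_y(0)}>0$; strong $L^\infty$-convergence $y_k\to y$ combined with \eqref{pCNs-2} then forces $E^{CN}_{\eps_2(k)}(y_k)\geq a\eps_2(k)^{-\beta}\abs{\tilde{P}^{(\gamma)}_y(0)}\to +\infty$, contradicting the assumed bound. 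Global injectivity together with $\det\nabla y>0$ a.e.\ then yields \eqref{ciarletnecas} via the area formula.

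For the recovery sequence (ii), I may assume $E_\sigma(y)<\infty$. The key difficulty is that the Ciarlet-Ne\v{c}as condition permits $y$ to have self-contact on $\partial\Omega$, which is incompatible with Corollary~\ref{cor:whenECNvanishes} applied directly to $y$. I therefore adopt a two-level approximation. \emph{Step A:} construct a sequence $y^{(j)}\in W^{2,s}(\Omega;\RR^d)$ of deformations injective on $\bar\Omega$ with a uniform positive gap between $y^{(j)}(x_1)$ and $y^{(j)}(x_2)$ for any pair with $\abs{x_1-x_2}>\varrho/2$, such that $y^{(j)}\to y$ strongly in $W^{2,s}$ and \eqref{yconstants} holds with constants independent of $j$. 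For each such $y^{(j)}$, Corollary~\ref{cor:whenECNvanishes} guarantees $E^{CN}_{\eps_2}(y^{(j)})=0$ whenever $\eps_2$ is below some threshold $\eps_2^*(j)>0$. \emph{Step B:} for each fixed $j$, pick $y^{(j)}_h\in Y_h$ with $\norm{y^{(j)}_h-y^{(j)}}_{W^{2,s}\cap W^{1,p}}\leq 2\cE(h)\to 0$ using \eqref{fespaceapprox}; strong $C^{1,\alpha'}$-convergence keeps $y^{(j)}_h$ in an $L^\infty$-neighborhood of $y^{(j)}$ where the aura argument of Remark~\ref{rem:aura} still yields $E^{CN}_{\eps_2}(y^{(j)}_h)=0$ for $\eps_2$ sufficiently small. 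The $E^{reg}_\sigma$-term converges by its uniform continuity on bounded subsets of $W^{2,s}$ (Proposition~\ref{prop:SGucont}), and $E^{el}_{\eps_1}(y^{(j)}_h)\to E^{el}(y^{(j)})$ follows from \eqref{Weps}, the uniform bounds, and dominated convergence. A standard diagonal extraction (in the spirit of Attouch's lemma) then selects $y_k:=y^{(j(k))}_{h(k)}\to y$ strongly in $W^{2,s}$ with $E^{h(k)}_{\eps(k),\sigma}(y_k)\to E_\sigma(y)$.

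The hard part will be the density result in Step A: any $y$ with finite constrained energy $E_\sigma(y)$ must be approximable in $W^{2,s}$ by deformations strictly injective on $\bar\Omega$, while preserving \eqref{yconstants} uniformly. Interior self-contact is already ruled out by the CN condition, the positivity of $\det\nabla y$, and the $C^{1,\alpha}$-regularity, so all potentially problematic contact sits on $\partial\Omega$. My strategy would be to perturb $y$ by a localized smooth push-off supported in shrinking neighborhoods of each boundary contact pair in a direction separating the two colliding sides, scaled small enough in $W^{2,s}$ to retain the uniform bounds on $\det\nabla y$ and on the $C^{1,\alpha}$-norm; alternatively, one may compose $y$ with a slight inward retraction of $\Omega$ and extend smoothly. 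Verifying that such a perturbation is genuinely $W^{2,s}$-close to $y$, keeps the determinant uniformly positive, and separates every pair of colliding boundary points is where the bulk of the technical effort in (ii) will lie.
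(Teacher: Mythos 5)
Your part (i) is essentially the paper's argument: pass to a subsequence with finite energy, invoke Proposition~\ref{prop:yEbounded} to place $y_n$ (and $y$) in a set where \eqref{yconstants} holds with uniform constants, use (what the paper packages as) Proposition~\ref{prop:Wucont} together with strong $W^{1,\infty}$-convergence for the elastic term, convexity for $E^{reg}_\sigma$, non-negativity of $E^{CN}_{\eps_2}$, and Proposition~\ref{prop:CNstab}/Remark~\ref{rem:CNstab} to force $P_y(0)=\emptyset$ by blow-up, whence the Ciarlet--Ne\v{c}as condition. That half is complete and correct.

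For (ii) your architecture (two-level approximation: first create a boundary gap, then project to $Y_h$, then diagonalize) matches the paper's, and the second of your two candidate constructions --- ``compose $y$ with a slight inward retraction of $\Omega$'' --- is exactly what the paper does via $z_j=y\circ\Psi_j$ with $\Psi_j(\Omega)\subset\{\dist{x}{\partial\Omega}>1/j\}$ and $\norm{\Psi_j-\operatorname{id}}_{W^{2,\infty}}\to0$ (note: no smooth extension beyond $\Omega$ is needed, since $\Psi_j$ maps $\Omega$ into itself). However, you explicitly defer the verification, and that is where the genuine mathematical content lies. Two remarks on the remaining gap. First, your other candidate --- localized push-offs supported near each boundary contact pair --- is a risky route: contact pairs need not be isolated (the self-contact set can be a positive-measure subset of $\partial\Omega$), local separating directions may be incompatible between overlapping contact neighborhoods, and preserving the uniform lower bound on $\det\nabla y$ under compactly supported perturbations is delicate; the paper avoids all of this by working with a single global diffeomorphism. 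Second, and more importantly, even granting the retraction, showing $E^{CN}_{\eps_2}(z_j)=0$ (and stably so under $O(\cE(h))$ perturbations) via Corollary~\ref{cor:whenECNvanishes} is \emph{not} a soft consequence of ``there is now a gap at the boundary.'' The point is more subtle: once every $\Psi_j(x)$ sits at distance $>1/j$ from $\partial\Omega$, a full Euclidean ball $B_{s(n)}(\Psi_j(x_1))\subset\Omega$ exists; on that ball Lemma~\ref{lem:biLi} gives a bi-Lipschitz lower bound so $y(B_{s(n)}(\Psi_j(x_1)))\supset B_{t(n)}(z_j(x_1))$ (this uses a degree/invariance-of-domain step), and then \emph{global} injectivity of $y$ (from CN plus local bi-Lipschitz, Remark 2.1 and Lemma~\ref{lem:biLi}) rules out $z_j(x_2)\in B_{t(n)}(z_j(x_1))$ for $\abs{x_1-x_2}>\varrho/2$. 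Without that chain the retraction alone does not prevent two far-apart interior points from having images at distance $O(\eps_2)$. So the skeleton is right and you do point at the correct construction, but the step you flag as ``the bulk of the technical effort'' is a real gap that needs the interior-ball/injectivity argument above to close.
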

\begin{rem}[$\Gamma$-convergence]
If (ii) is slightly weakened to
\begin{enumerate}
\item[(ii)'] there exists a sequence $y_k\rightharpoonup y$ in $W^{2,s}$ (weakly) such that
\[
	\lim_{k\to\infty} E_{\eps(k),\sigma}^{h(k)}(y_k)= E_{\sigma}(y),
\]
\end{enumerate}
then (i) and (ii)' are exactly the definition of
$\Gamma(W^{2,s}\text{-weak})$-convergence of $E^h_{\eps,\sigma}$ to $E_{\sigma}$ as $(h,\eps)\to 0$, i.e., 
$\Gamma$-convergence with respect to the weak topology in $W^{2,s}$.
\end{rem}
\begin{rem}[convergence of minimizers]\label{rem:minimizersconverge}
For $\sigma>0$ fixed, 
the family of functionals $(E^h_{\eps,\sigma})_{(\eps,h)}$ is equi-coercive in $W^{2,s}/\RR^d$ 
due to \eqref{Weps} and 
the obvious properties of $E^{reg}_{\sigma}$.
As a consequence, $\Gamma$-convergence automatically implies that up to a subsequence (and subtracting suitable constants if necessary),
minimizers of $E^h_{\eps,\sigma}$ weakly converge in $W^{2,s}$ to a minimizer of the limit functional $E_{\sigma}$. 
Moreover,
$\Gamma(W^{2,s}\text{-weak})$-convergence of $E^h_{\eps,\sigma}$ to $E_{\sigma}$
is equivalent to $\Gamma(W^{1,p}\text{-weak})$-convergence.
\end{rem}
\begin{rem}\label{rem:noscalingregime}
Unlike the corresponding result in \cite{MieRou16a}, we do not 
need to prescribe any ``stability criterion'' linking $\eps_1$ and $h$. This is essentially a consequence of 
Lemma~\ref{lem:detbound} below that was slightly improved compared to its predecessor in \cite{HeaKroe09a}, see also Remark
\ref{rem:newsinceHK}. 
However, if 
$E^{CN}_{\eps_2}$ or other integrals in the energy are not computed exactly as assumed within this section, but only approximated by numerical integration (even in $Y_h$), 
we typically need $h$ of the order of $\eps_2$ or smaller to keep the approximation error on a tolerable level, cf.~Remark~\ref{rem:aura}.
\end{rem}
\begin{rem}\label{rem:PalmerHealey}
In \cite{PaHea17a} (also see \cite{Pa18a} for domains with Lipschitz boundaries), equilibrium equations for hyperelastic minimizers were derived. The Ciarlet-Ne\v{c}as condition there gives rise to a boundary force term with a force density given in form of a Radon measure concentrated on the self-contact set on the boundary (if any). In a sense, our penalty term in the limit as $\eps_2\to 0$ should represent an associated energy. However, a direct connection on the technical level is not quite obvious.
\end{rem}
For the proof of Theorem~\ref{thm:convergence}, we strongly rely on a result of \cite{HeaKroe09a} that yields a uniform positive lower bound $\det\nabla y\geq \delta>0$ for
all deformations with bounded energy, provided that the energy contains a higher order term 
that controls the norm of $J(x):=\det\nabla y(x)$ in a H\"older space.
This also uses that as a Lipschitz domain, $\Omega$ has an interior cone property: 
For each $x\in \Omega$, there exists a rotation $Q_x\in SO(d)$ such that $x+Q_xV\subset \Omega$, 
where
\[
  V:=B_\mu(0)\cap \{z=(z_1,\ldots,z_d)\in \RR^d\mid z_1 >\nu \abs{z}\}\subset \RR^d,
\]
is a fixed (cut-off) cone given by suitable constants $\mu>0$, $\nu<1$ independent of $x$.
For such domains, we have the following variant of \cite[Lemma 4.1]{HeaKroe09a}. Here, we also 
use slightly weaker assumptions and state additional explicit information about the constant $\delta$, 
but essentially, it is still based on the same ideas.
\begin{lem}\label{lem:detbound}
Suppose that $\Omega\subset \RR^d$ is bounded domain with an interior cone property as introduced above, and let 
$J\in C^{\alpha}(\Omega)$,
$\alpha\in(0,1)$.
In addition, suppose that
\begin{align}\label{Jconstants}
	\int_{\Omega} \max\{\delta,J(x)\}^{-q}\,dx\leq C
	~~~\text{and}~~~
	\sup_{\stackrel{x_1,x_2\in \Omega}{\abs{x_1-x_2}<\mu}} \frac{\abs{J(x_1)-J(x_2)}}{\abs{x_1-x_2}^\alpha}
 \leq M
\end{align}
where $q\geq d/\alpha$, $M>0$ are constants and
\[
  \delta:=\kappa^{-1}(C),~~~\kappa(t):=
	d\frac{\abs{V}}{\mu^d} \int_0^\mu \left(t+M \abs{r}^{\alpha}\right)^{-q}\,r^{d-1}\,dr,
	~~~t>0.
\]
Then $J(x)> \delta>0$ for all $x\in \Omega$.
\end{lem}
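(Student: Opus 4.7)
The plan is to argue by contradiction. Suppose there exists $x_0 \in \Omega$ with $J(x_0) \leq \delta$. My first step would be to leverage the H\"older estimate from \eqref{Jconstants} to deduce the pointwise bound
\[
J(x) \leq J(x_0) + M\abs{x-x_0}^\alpha \leq \delta + M\abs{x-x_0}^\alpha\quad\text{for every }x \in \Omega \cap B_\mu(x_0),
\]
which immediately gives $\max\{\delta, J(x)\}^{-q} \geq (\delta + M\abs{x-x_0}^\alpha)^{-q}$ on the same set.

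Next, I would invoke the interior cone property to select a rotation $Q_{x_0} \in SO(d)$ with $x_0 + Q_{x_0}V \subset \Omega$. Since $V \subset B_\mu(0)$, the pointwise lower bound is valid throughout the translated cone. Integrating and using the rotation-invariant change of variables $z = Q_{x_0}^\top(x-x_0)$ gives
\[
\int_\Omega \max\{\delta, J(x)\}^{-q}\,dx \,\geq\, \int_V \bigl(\delta + M\abs{z}^\alpha\bigr)^{-q}\,dz.
\]
Switching to spherical coordinates and observing that the solid angle of the truncated cone equals $\mathcal{H}^{d-1}(V \cap S^{d-1}) = d\abs{V}/\mu^d$ identifies the right-hand side as exactly $\kappa(\delta)$. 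Combining with the $L^q$-type bound in \eqref{Jconstants} and the defining identity $\kappa(\delta) = C$, I arrive at $C \geq \kappa(\delta) = C$, forcing equality throughout. Equality in turn forces $\int_{\Omega \setminus (x_0 + Q_{x_0}V)} \max\{\delta, J(x)\}^{-q}\,dx = 0$, which contradicts the strict positivity of the integrand on a set of positive measure (a bounded Lipschitz domain cannot be covered by a single cone of fixed size $\mu$, since a cone has a vertex whereas a Lipschitz domain is locally a graph). This rules out $J(x_0) \leq \delta$, so the strict inequality $J(x) > \delta$ holds for all $x \in \Omega$.

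The two technical points that require care are: (a) the invertibility of $\kappa\colon (0,\infty) \to (0,\infty)$, so that $\delta = \kappa^{-1}(C)$ is unambiguously defined---strict monotonicity of $t \mapsto (t + Mr^\alpha)^{-q}$ is obvious, while the blow-up $\kappa(t) \to +\infty$ as $t \to 0^+$ rests precisely on the hypothesis $q \geq d/\alpha$, which renders the integrand $(Mr^\alpha)^{-q}r^{d-1}$ non-integrable at $r=0$; and (b) the polar-coordinate identification of $\int_V (\delta + M\abs{z}^\alpha)^{-q}\,dz$ with $\kappa(\delta)$ via the solid-angle computation for the truncated cone. Beyond these two points, the argument is a direct application of the interior cone property to localize the H\"older estimate, and compared with the original version in \cite{HeaKroe09a}, the main improvement is that the H\"older control in \eqref{Jconstants} is only required on the local scale $\mu$, which is exactly what the cone argument needs.
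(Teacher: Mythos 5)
Your argument is correct in substance and hinges on exactly the same key estimate as the paper's: combine the interior cone property, the local H\"older bound, and the polar-coordinate identification $\int_V(\delta+M\abs{z}^\alpha)^{-q}\,dz=\kappa(\delta)$ to play the integral lower bound against the hypothesis $\int_\Omega\max\{\delta,J\}^{-q}\leq C$. The difference is one of packaging. The paper avoids an explicit contradiction by introducing the shifted function $K:=J+[\delta-J(x_0)]^+$, which inherits both conditions in \eqref{Jconstants} (since $K\geq J$ and $K-J$ is a constant) and satisfies $K(x_0)\geq\delta$ by construction; it then shows $\kappa(K(x_0))<C$, i.e.\ $K(x_0)>\delta$, from which $J(x_0)>\delta$ follows because $K(x_0)=\delta$ is impossible. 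Your version supposes $J(x_0)\leq\delta$ directly and forces a vanishing integral on $\Omega\setminus(x_0+Q_{x_0}V)$. Both routes ultimately need the same positive-measure observation, so the trade-off is purely presentational.

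There is, however, a weak spot in your last step. You justify the positive measure of $\Omega\setminus(x_0+Q_{x_0}V)$ by saying ``a cone has a vertex whereas a Lipschitz domain is locally a graph,'' but that reasoning does not go through: a truncated open cone with aperture $\nu<1$ \emph{is} itself a Lipschitz domain, so the distinction you invoke is illusory. The correct and simpler observation is that $x_0$ is an \emph{interior} point of the open set $\Omega$, so $\Omega$ contains a full ball $B_\rho(x_0)$, whereas $x_0\in\partial(x_0+Q_{x_0}V)$ and the cone occupies only a proper angular sector (the directions $\omega$ with $\omega\cdot Q_{x_0}e_1\leq\nu$ are excluded, a non-empty open set of directions since $\nu<1$). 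Hence $B_\rho(x_0)\setminus\overline{x_0+Q_{x_0}V}$ is a non-empty open subset of $\Omega\setminus(x_0+Q_{x_0}V)$, which therefore has positive measure. Since $J$ is real-valued, $\max\{\delta,J\}^{-q}>0$ pointwise, and the vanishing of its integral over a positive-measure set is the contradiction you need. With this correction, the proof is complete.
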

\begin{rem}\label{rem:newsinceHK}
Since $\delta$ depends on $C$, the first condition in \eqref{Jconstants} looks somewhat implicit. 
Typically, we a priori have it with some other constant instead of $\delta$, say, $\gamma\geq 0$. One can then compute $\delta$ 
(which does not depend on $\gamma$) and check a posteriori whether $\gamma\leq \delta$. If this is the case,
we automatically get \eqref{Jconstants} with $\delta$, too, because then
trivially $\int_{\Omega} \max\{\delta,J(x)\}^{-q}\,dx\leq \int_{\Omega} \max\{\gamma,J(x)\}^{-q}\,dx\leq C$.
We state \eqref{Jconstants} in this slightly complicated form 
to point out that the singular function $J\mapsto J^{-q}$ appearing there can 
be modified near the origin, removing the singularity, as long as one does not change the value for $J\geq \delta$. 
As we show in detail in Proposition~\ref{prop:yEbounded} below, this is quite useful because 
it means we can verify \eqref{Jconstants} for $J=\det\nabla y$
also using energy bounds for
our approximate elastic energy functionals involving the modified energy densities $W_{\eps_1}$, at least if $\eps_1$ is small enough.
\end{rem}
\begin{proof}[Proof of Lemma~\ref{lem:detbound}]
First notice that $\kappa$ is a strictly decreasing function with $\kappa(t)\to 0$ as $t\to+\infty$, and
$\kappa(t) \to +\infty$ as $t\searrow 0$ since 
$q\geq d/\alpha$. Hence, $\kappa^{-1}:(0,\infty)\to (0,\infty)$ is well defined and also strictly decreasing.
Moreover, while $\kappa$ was defined using polar coordinates, the integral also can be written in standard coordinates:
\begin{align}\label{galtdef}
  \kappa(t)=\int_{V}\left(t+M \abs{x}^{\alpha}\right)^{-q}\,dx
	=\int_{Q V}\left(t+M \abs{x}^{\alpha}\right)^{-q}\,dx,
\end{align}
for all $Q \in SO(d)$.
Now let $x_0\in\Omega$ and choose $Q=Q(x_0)\in SO(d)$ such that $x_0+Q V\subset \Omega$. 
Depending on $x_0$, we define $K\in C^\alpha(\Omega)$, 
\[
	K(x):=\left\{\begin{alignedat}[c]{2}
	&J(x) \quad&&\text{if $J(x_0)\geq \delta$,}\\
	&J(x)+\delta-J(x_0) \quad&&\text{if $J(x_0)<\delta$.}
	\end{alignedat}\right.
\]
Since $K\geq J$ and their difference is a constant, \eqref{Jconstants} implies that
\begin{align}\label{Jconstants2}
	\int_{\Omega} \max\{\delta,K(x)\}^{-q}\,dx\leq C
	~~~\text{and}~~~
	\sup_{\stackrel{x_1,x_2\in \Omega}{\abs{x_1-x_2}<\mu}} \frac{\abs{K(x_1)-K(x_2)}}{\abs{x_1-x_2}^\alpha}
 \leq M
\end{align}
In addition, $K(x_0)\geq \delta$, and from \eqref{Jconstants2} and \eqref{galtdef} we thus get that
\begin{align*}
\begin{aligned}
	\kappa(K(x_0))	
	&\leq \int_{\Omega \cap B_{\mu}(x_0)}(K(x_0)+M
	\abs{x-x_0}^{\alpha})^{-q}\,dx \\
	&= \int_{\Omega \cap B_{\mu}(x_0)}\max\{\delta,K(x_0)+M
	\abs{x-x_0}^{\alpha}\}^{-q}\,dx \\
	&<\int_\Omega \max\{\delta,K(x)\}^{-q}\,dx\leq C.
\end{aligned}
\end{align*}
Hence, $K(x_0)>\kappa^{-1}(C)=\delta$, and therefore $J(x_0)=K(x_0)>\delta$.
\end{proof}
We can now derive additional properties for deformations with bounded energy.
\begin{prop}\label{prop:yEbounded}
Let $\Omega\subset\RR^d$ be a bounded Lipschitz domain, let $\sigma>0$,
suppose that \eqref{W0}, \eqref{W1}, \eqref{Weps} and \eqref{pqsd} hold, and let $K>0$. 
Then there is a constant $\bar\eps_1>0$ such that 
for every $0<\eps_1\leq \bar\eps_1$ and every 
\[
	y\in \cB_K(\eps_1):=\mysetr{y\in W^{2,s}(\Omega;\RR^d)}{
	\textstyle E_\sigma^{reg}(y)+E^{el}_{\eps_1}(y)\leq K
	},
\]
\eqref{yconstants}
holds with $\alpha:=\frac{s-d}{s}$ and suitable constants $\delta>0$, $M_1,M_2\geq 0$ independent of $y$ and $\eps_1$.
That is,
\begin{align}\label{yconstantsB}
	\text{$\det \nabla y\geq \delta>0$}
	~~~\text{and}~~~
	\text{$\abs{\nabla y}\leq M_1$}~~~\text{on $\Omega$,}~~~\text{and}~~~\norm{\nabla y}_{C^\alpha(\Omega)}\leq M_2.
\end{align}
This also holds for $\eps_1=0$ if we replace $E^{el}_{\eps_1}$ by $E^{el}$.
\end{prop}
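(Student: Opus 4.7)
My strategy is first to extract uniform bounds on $\|\nabla y\|_{L^\infty}$ and the Hölder seminorm of $\nabla y$ from $E^{reg}_\sigma$ combined with the coercivity built into $W_{\eps_1}$, and then to invoke Lemma~\ref{lem:detbound} to produce the uniform positive lower bound on $\det\nabla y$. Throughout, the main subtle point will be to arrange things so that the constant $\delta$ produced by Lemma~\ref{lem:detbound} is independent of $\eps_1$, which is exactly what Remark~\ref{rem:newsinceHK} flags as the purpose of the unusual form of the hypothesis in that lemma.

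From \eqref{Weps} and the bound $E^{el}_{\eps_1}(y)\leq K$, I immediately get
\[
	\|\nabla y\|_{L^p(\Omega)}^p \leq K_1,\qquad
	\int_\Omega \max\{\eps_1,\det\nabla y\}^{-q}\,dx \leq K_1,
\]
with $K_1$ depending only on $K$, $c_3$, $c_4$ and $|\Omega|$, while $\|D^2 y\|_{L^s(\Omega)}^s \leq K/\sigma$ follows from $E^{reg}_\sigma(y)\leq K$. Because $s>d$, Morrey's inequality applied to $\nabla y\in W^{1,s}(\Omega;\RR^{d\times d})$, together with a Poincaré-type control of the mean of $\nabla y$ by $\|\nabla y\|_{L^p}$, yields
\[
	\|\nabla y\|_{L^\infty(\Omega)}\leq M_1,\qquad \|\nabla y\|_{C^\alpha(\bar\Omega)}\leq M_2,
\]
with $\alpha=(s-d)/s$ and $M_1,M_2$ depending only on $K,\sigma,p,s$ and $\Omega$, not on $\eps_1$ or $y$. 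Since $J:=\det\nabla y$ is a polynomial of degree $d$ in the entries of $\nabla y$, a direct estimate gives the Hölder bound in \eqref{Jconstants} with a constant $M = C_d M_1^{d-1} M_2$, where $C_d$ is purely dimensional; and the exponent condition $q>d/\alpha$ required by Lemma~\ref{lem:detbound} is exactly \eqref{pqsd}.

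I then apply Lemma~\ref{lem:detbound} with $C:=K_1$ and $M$ as above to obtain the constant $\delta:=\kappa^{-1}(K_1)>0$, which depends only on $K_1$, $M$, $q$, $\alpha$ and the interior cone data of $\Omega$, and in particular is independent of $\eps_1$. I finally set $\bar\eps_1:=\delta$. For any $0<\eps_1\leq\bar\eps_1$ the pointwise inequality $\max\{\delta,J\}^{-q}\leq \max\{\eps_1,J\}^{-q}$ shows that the first condition in \eqref{Jconstants} holds with the same constant $K_1$, so Lemma~\ref{lem:detbound} delivers $\det\nabla y>\delta$ on $\Omega$, completing \eqref{yconstantsB}. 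The case $\eps_1=0$ is handled identically with $W$ in place of $W_{\eps_1}$, and is in fact slightly easier since \eqref{W1} already gives $\int_\Omega(\det\nabla y)^{-q}\,dx\leq K_1$ directly. The only step that requires care is this a posteriori matching of $\eps_1$ and $\delta$; everything else is a standard Sobolev embedding plus polynomial calculation.
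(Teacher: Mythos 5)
Your proof follows essentially the same route as the paper's: extract a uniform $W^{1,s}$-bound on $\nabla y$ (hence $L^\infty$ and $C^\alpha$ bounds via embedding), bound the H\"older seminorm of $\det\nabla y$ polynomially, and apply Lemma~\ref{lem:detbound} with the a posteriori choice $\bar\eps_1:=\delta$ so that for $\eps_1\leq\delta$ the coercivity estimate $\int_\Omega\max\{\eps_1,\det\nabla y\}^{-q}\leq K_1$ upgrades to $\int_\Omega\max\{\delta,\det\nabla y\}^{-q}\leq K_1$ --- exactly the $\eps_1$-uniformity trick the paper uses. One cosmetic slip: the constraint $E^{reg}_\sigma(y)+E^{el}_{\eps_1}(y)\leq K$ does not directly give $E^{reg}_\sigma(y)\leq K$, since $E^{el}_{\eps_1}(y)$ can be as negative as $-c_4\abs{\Omega}$; the correct bound is $\normn{D^2y}_{L^s}^s\leq (K+c_4\abs{\Omega})/\sigma$, which changes nothing downstream.
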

\begin{proof}
We only discuss the case $\eps_1>0$; 
the case $\eps_1=0$ is a similar and more straightforward application of Lemma~\ref{lem:detbound}.
Since $E_\sigma^{reg}(y)=\sigma\int_\Omega \abs{D^2y}^s\,dx$ and $W_{\eps_1}$ satisfies the lower bound stated in \eqref{Weps},
$y\in \cB_K(\eps_1)$ implies that $\norm{D^2y}_{L^s}^s+c_3 \norm{\nabla y}_{L^p}^p \leq K+c_4$. In particular,
$\nabla y$ is bounded in $W^{1,s}(\Omega;\RR^{d\times d})$, which is continuously embedded in $C^\alpha(\Omega;\RR^{d\times d})$ and
$C(\bar\Omega;\RR^{d\times d})$. 
Hence, we immediately get the last two inequalities in \eqref{yconstantsB}. It remains to show the lower bound for $\det \nabla y$.
This will be obtained by applying Lemma~\ref{lem:detbound}, and we therefore have to verify \eqref{Jconstants} 
for $J:=\det \nabla y$. Since $\norm{\nabla y}_{L^\infty}\leq M_1$ and $\norm{\nabla y}_{C^\alpha}\leq M_2$,
the H\"older semi-norm of $\det\nabla y$ (a polynomial of degree $d$ in $\nabla y$) appearing in \eqref{Jconstants} is bounded by 
\[
	M:=d M_1^{d-1} M_2. 
\]
For a proof of the first inequality in \eqref{Jconstants}, let $\gamma>0$.
For $y\in \cB_K(\eps_1)$, we have that $\int_\Omega W_{\eps_1}(x,\nabla y)\,dx\leq K$, 
and as a consequence 
of this and \eqref{Weps}, we obtain the following estimate for all $\eps_1\leq \gamma$:
\[
\bald
  \int_\Omega \max\{\gamma,\det \nabla y(x)\}^{-q}\,dx 
	\leq	
	\frac{c_4}{c_3}\abs{\Omega}+\frac{1}{c_3}\int_{\Omega} W_{\eps_1}(x,\det \nabla y)\,dx 	
	\leq C &\\
	\text{with}~~C:=\frac{c_4}{c_3}\abs{\Omega} +\frac{1}{c_3} K&.
\eald
\]
Notice that $C$ does not depend on $\gamma$ or $\eps_1$.
Hence, we may use $\bar\eps_1:=\gamma:=\delta$,
with the constant $\delta$ of Lemma~\ref{lem:detbound} (with $C$ and $M$ as defined above).
The lemma then entails that $J=\det\nabla y\geq \delta>0$, i.e.,
the first inequality in \eqref{yconstantsB}.
\end{proof}

The final missing ingredients for the proof of Theorem~\ref{thm:convergence} are some uniform continuity 
properties of the terms in the energy.
\begin{prop}\label{prop:SGucont}
Let $\Omega\subset\RR^d$ be a bounded domain, $\sigma>0$ and $s\geq 1$.
Then 
$y\mapsto E^{reg}_\sigma(y)=\sigma \int_\Omega |D^2 y(x)|^s\,dx$ 
is uniformly (Lipschitz) continuous on all bounded subset of $W^{2,s}(\Omega;\RR^d)$.
\end{prop}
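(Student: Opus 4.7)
The plan is to reduce the claim to a pointwise inequality for the map $A\mapsto\abs{A}^s$ on $\RR^{d\times d\times d}$ and then apply H\"older's inequality.

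First I would record the elementary pointwise estimate: for all $s\geq 1$ and all $A,B$ in a finite dimensional normed space,
\[
	\bigl|\abs{A}^s-\abs{B}^s\bigr|\leq s\max\{\abs{A},\abs{B}\}^{s-1}\abs{A-B}.
\]
This follows at once from the mean value theorem applied to $t\mapsto \abs{tA+(1-t)B}^s$, together with the fact that $\abs{tA+(1-t)B}\leq \max\{\abs{A},\abs{B}\}$ for $t\in[0,1]$.

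Next, I would fix $R>0$ and consider $y,z\in W^{2,s}(\Omega;\RR^d)$ with $\norm{y}_{W^{2,s}},\norm{z}_{W^{2,s}}\leq R$. Applying the pointwise estimate with $A=D^2y(x)$ and $B=D^2z(x)$ and integrating gives
\[
	\bigl|E^{reg}_\sigma(y)-E^{reg}_\sigma(z)\bigr|\leq \sigma s \int_\Omega \max\{\abs{D^2y},\abs{D^2z}\}^{s-1}\abs{D^2y-D^2z}\,dx.
\]
I would then invoke H\"older's inequality with conjugate exponents $s/(s-1)$ and $s$ to bound the right-hand side by
\[
	\sigma s \Big(\int_\Omega \max\{\abs{D^2y},\abs{D^2z}\}^{s}\,dx\Big)^{\!(s-1)/s}\Big(\int_\Omega \abs{D^2y-D^2z}^s\,dx\Big)^{\!1/s}.
\]
Since $\max\{\abs{D^2y},\abs{D^2z}\}^s\leq \abs{D^2y}^s+\abs{D^2z}^s$, the first factor is bounded by $(2R^s)^{(s-1)/s}$, and the second factor is bounded by $\norm{y-z}_{W^{2,s}}$. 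Therefore
\[
	\bigl|E^{reg}_\sigma(y)-E^{reg}_\sigma(z)\bigr|\leq \sigma s\,(2R^s)^{(s-1)/s}\norm{y-z}_{W^{2,s}},
\]
which is exactly Lipschitz continuity on the ball of radius $R$, with Lipschitz constant depending only on $\sigma$, $s$ and $R$. Since $R$ was arbitrary, this yields uniform Lipschitz continuity on every bounded subset of $W^{2,s}(\Omega;\RR^d)$.

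There is no real obstacle here; the only thing worth double-checking is the case $s=1$, for which the pointwise inequality reduces to the reverse triangle inequality $\bigl|\abs{A}-\abs{B}\bigr|\leq\abs{A-B}$ and no H\"older step is needed, so the conclusion still holds (with Lipschitz constant $\sigma$, independent of $R$).
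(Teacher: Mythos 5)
Your argument is correct and essentially the same as the paper's: both reduce the claim to the pointwise $s$-power Lipschitz estimate for $G\mapsto\abs{G}^s$ (the paper writes it with $\abs{G_1}^{s-1}+\abs{G_2}^{s-1}$ in place of your $\max\{\abs{A},\abs{B}\}^{s-1}$, which differ only by a harmless constant) followed by H\"older's inequality with exponents $s/(s-1)$ and $s$. Your write-up simply supplies the details (mean value theorem derivation, explicit Lipschitz constant, the $s=1$ degenerate case) that the paper leaves to the reader.
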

\begin{proof}
This is a simple consequence of H\"older's inequality and the elementary $s$-Lipschitz continuity of $G\mapsto \abs{G}^s$,
$\RR^{d\times d\times d}\to \RR$:\\
$\abs{\abs{G_1}^s-\abs{G_2}^s}\leq s (\abs{G_1}^{s-1}+\abs{G_2}^{s-1}) \abs{G_1-G_2}$.
\end{proof}
\begin{prop}\label{prop:Wucont}
Let $\Omega\subset\RR^d$ be a bounded Lipschitz domain,
suppose that \eqref{W0}, \eqref{W1}, \eqref{Weps} and \eqref{pqsd} hold, and let $K>0$. 
Then there exists a constant $\bar\eps_1>0$ 
and a modulus of continuity $\theta$ (i.e., $\theta:[0,\infty)\to [0,\infty)$ continuous and increasing with $\theta(0)=0$)
such that 
for every $0<\eps_1 \leq \bar\eps_1$, 
\begin{align}\label{pWuc-1}
\bald
  \abs{E^{el}_{\eps_1}(y_1)-E^{el}(y_2)\,dx} 
	\leq \abs{\Omega}\eps_1+\theta(\norm{y_1-y_2}_{W^{1,\infty}})&\\
	\qquad \text{for all $y_1,y_2\in \cB_K(\eps_1)$}&,
\eald
\end{align}
where $\cB_K(\eps_1)$ is the set defined in Proposition~\ref{prop:yEbounded}.
We emphasize that $\theta$ is independent of $y_1$, $y_2$ and $\eps_1$.
\end{prop}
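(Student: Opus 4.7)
The natural strategy is to split via the triangle inequality
\[
|E^{el}_{\eps_1}(y_1)-E^{el}(y_2)| \le |E^{el}_{\eps_1}(y_1)-E^{el}(y_1)| + |E^{el}(y_1)-E^{el}(y_2)|
\]
and treat the two summands by very different arguments. For the first, I invoke Proposition~\ref{prop:yEbounded} with the given $K$ to obtain $\bar\eps_1>0$ and $M_1,\delta>0$ such that every $y\in\cB_K(\eps_1)$ with $\eps_1\le\bar\eps_1$ satisfies $\abs{\nabla y(x)}\le M_1$ and $\det\nabla y(x)\ge\delta$ pointwise in $\Omega$; in particular, $\nabla y(x)$ lies in the compact set $S:=\{F\in\RR^{d\times d}:\abs{F}\le M_1,\ \det F\ge\delta\}$, which sits strictly inside $\{\det>0\}$. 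I then shrink $\bar\eps_1$ further so that $\bar\eps_1\le\min\{\delta,1/M_1\}$. For such $\eps_1$ the hypotheses $\abs{F}\le 1/\eps_1$ and $\det F\ge\eps_1$ in the third line of \eqref{Weps} are satisfied at $F=\nabla y_1(x)$ a.e., and combining with the upper bound from the second line yields $\abs{W_{\eps_1}(x,\nabla y_1)-W(x,\nabla y_1)}\le\eps_1$ a.e., so the first summand is at most $\abs{\Omega}\eps_1$.

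The bulk of the work is to construct a modulus $\theta$, independent of $y_1,y_2\in\cB_K(\eps_1)$ and of $\eps_1\le\bar\eps_1$, with
\[
\int_\Omega \abs{W(x,\nabla y_1(x))-W(x,\nabla y_2(x))}\,dx \le \theta(\norm{\nabla y_1-\nabla y_2}_{L^\infty}).
\]
Here I plan to exploit polyconvexity: writing $W(x,F)=h(x,m(F))$ with $h(x,\cdot)$ convex, the restriction of $h(x,\cdot)$ to $m(S)$ is Lipschitz in the $A$-variable, with a Lipschitz constant $L(x)$ bounded, via the standard convex-function estimate relating slope and oscillation, by a geometric constant times the oscillation of $h(x,\cdot)$ on a slightly enlarged compact set $m(S')$ with $S\subset S'\subset\{\det>0\}$. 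Since $m$ is smooth on $S'$, this produces the pointwise bound
\[
\abs{W(x,\nabla y_1(x))-W(x,\nabla y_2(x))} \le C_m\, L(x)\, \norm{\nabla y_1-\nabla y_2}_{L^\infty}
\]
with $C_m$ depending only on $S'$. Integration then gives the required estimate with a \emph{linear} $\theta$, provided $L\in L^1(\Omega)$.

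The main obstacle is precisely the $L^1$-integrability of $L$. By convexity, the oscillation of $h(x,\cdot)$ on $m(S')$ is at most $\max_i W(x,F_i)+c_2$ for finitely many fixed matrices $F_1,\dots,F_N\in\{\det>0\}$ whose $m$-images generate a polytope covering $m(S')$, the $-c_2$ taking care of the infimum via \eqref{W1}. Thus $L\in L^1(\Omega)$ follows from the standard integrability $W(\cdot,F_i)\in L^1(\Omega)$ for fixed $F_i$ with positive determinant, which is tacit in the nonlinear elasticity setting (and can in any case be arranged by modifying $W$ by an $x$-dependent integrable constant that is absorbed into $c_2$). If one prefers not to assume this outright, the same conclusion can be reached by a Scorza–Dragon decomposition $\Omega=K_\eta\cup(\Omega\setminus K_\eta)$ with $\abs{\Omega\setminus K_\eta}<\eta$ on which $W\restriction_{K_\eta\times S}$ is jointly continuous (hence uniformly continuous, with modulus $\omega_\eta$), combined with equi-integrability of $\{W(\cdot,\nabla y)\}_{y\in\cB_K(\eps_1)}$ inherited from the uniform energy bound $\int_\Omega W(x,\nabla y)\,dx\le K+\abs{\Omega}\eps_1$, followed by optimisation of $\eta$ as a function of $\norm{y_1-y_2}_{W^{1,\infty}}$ to yield $\theta$.
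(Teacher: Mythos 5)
Your decomposition $|E^{el}_{\eps_1}(y_1)-E^{el}(y_2)|\le|E^{el}_{\eps_1}(y_1)-E^{el}(y_1)|+|E^{el}(y_1)-E^{el}(y_2)|$ and your treatment of the first summand coincide with the paper's: invoke Proposition~\ref{prop:yEbounded} so that $\nabla y$ takes values in the compact set $S=\{|F|\le M_1,\ \det F\ge\delta\}$, shrink $\bar\eps_1$ so that $\bar\eps_1\le\min\{\delta,1/M_1\}$, and then the third line of \eqref{Weps} yields the $|\Omega|\eps_1$ term. For the second summand the paper's proof is extremely terse: for $x$-independent $W$, uniform continuity on the compact set $S$ gives the modulus $\theta$ directly; for general Carath\'eodory $W$ the paper simply cites Scorza--Dragoni and stops. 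Your primary argument via polyconvexity and the convex slope--oscillation bound is a genuinely different and more quantitative route (it even yields a linear $\theta$), and your alternative route is the paper's. So the mathematical content is essentially a superset of the paper's with one extra option.

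That said, neither of your two sub-arguments closes the contribution from the exceptional set rigorously under only the stated hypotheses, and this is worth flagging precisely because the paper passes over the same point. In the polyconvex route, you yourself note that $L\in L^1(\Omega)$ reduces to $W(\cdot,F_i)\in L^1(\Omega)$ for a finite set of $F_i$ with $\det F_i>0$; this is a natural but genuinely extra hypothesis, not implied by \eqref{W0}--\eqref{W2}, and your suggestion to ``absorb an $x$-dependent integrable constant into $c_2$'' does not work because $c_2$ in \eqref{W1} must be an $x$-independent constant. In the Scorza--Dragoni route, the claim that equi-integrability of $\{W(\cdot,\nabla y)\}_{y\in\cB_K(\eps_1)}$ is ``inherited from the uniform energy bound'' is not justified: a uniform $L^1$ bound on a family of nonnegative functions does not give uniform integrability (this is exactly the distinction between $L^1$-boundedness and weak $L^1$-compactness). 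Concretely, after Scorza--Dragoni one is left with $\int_{\Omega\setminus K_\eta}\sup_{F\in S}|W(\cdot,F)|\,dx$, and neither the Carath\'eodory property, the coercivity \eqref{W1}, nor the energy bound force $x\mapsto\sup_{F\in S}W(x,F)$ to be integrable. To make either route airtight one should assume something like $\sup_{F\in S'}W(\cdot,F)\in L^1(\Omega)$ for compact $S'\subset\{\det>0\}$, or equivalently a local integrable upper bound on $W$; the paper implicitly needs this as well, so your having surfaced the issue is a point in your favour even though your proposed resolution via equi-integrability is not correct as stated.
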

\begin{proof} By \eqref{Weps}, 
\[
	\abs{W_{\eps_1}(x,F)-W(x,F)}\leq \eps_1\quad\text{if }\abs{F}\leq \frac{1}{\eps_1}~~\text{and}~~\det F\geq \eps_1.
\]	
In view of 
Proposition~\ref{prop:yEbounded}, it therefore suffices to show 
that for a suitable modulus of continuity $\theta$,
\begin{align}\label{pWuc-2}
\bald
  &\abs{\int_\Omega W(x,\nabla y_1(x))\,dx
	-\int_\Omega W(x,\nabla y_2(x))\,dx} \\
	&\qquad \leq \theta(\norm{\nabla y_1-\nabla y_2}_{W^{1,\infty}})
	\qquad \text{for all $y_1,y_2\in \tilde\cB_K$},
\eald
\end{align}
i.e., the uniform continuity of
$y\mapsto \int_\Omega W(x,\nabla y)\,dx$ 
with respect to the topology of $W^{1,\infty}$ on
the set
\[
	\tilde\cB_K:=\mysetr{y\in W^{2,s}(\Omega;\RR^d)}{
	\abs{\nabla y}\leq M_1~~\text{and}~~ \det \nabla y\geq \delta ~~\text{in $\Omega$}
	}.
\]
If $W$ does not depend on $x$, \eqref{pWuc-2} is obvious as for each $x$, $W(x,\cdot):\RR^{d\times d}\to \RR\cup \{+\infty\}$ is continuous and therefore uniformly continuous 
on any compact set where it is finite. For a general Carath\'eodory function $W$,
\eqref{pWuc-2} still follows from similar reasoning, as a consequence of the Scorza-Dragoni theorem (continuity of $W$ on compact sets with complements 
of arbitrarily small measure in $\Omega$, see \cite{Da08B}, e.g.).
\end{proof}

\begin{proof}[Proof of Theorem~\ref{thm:convergence}]
We will only provide a proof for the case involving Galerkin approximations with $h(n)>0$, $h(n)\to 0$. The case $h=0$ is similar and slightly simpler.

{\bf (i) ``Lower bound'':} Let $y_n\rightharpoonup y$ as $n\to\infty$, weakly in $W^{2,s}$. By compact embedding, this implies that
$y_n\to y$ strongly in $W^{1,\infty}$.
Passing to a suitable subsequence (not relabeled), we may assume that
$e_0:=\liminf E^{h(n)}_{\sigma,\eps(n)}(y_n)=\lim E^{h(n)}_{\sigma,\eps(n)}(y_n)$. In addition, we may assume that 
$e_0<+\infty$ because otherwise there is nothing to show. With $K:=e_0+1$,
we have $E^{h(n)}_{\sigma,\eps(n)}(y_n)\leq K$ for all $n$ sufficiently large.
Since $E^{CN}_{\eps_2(n)}\geq 0$, we infer that $y_n\in \cB_K(\eps_1(n))$ 
for all such $n$, where $\cB_K(\eps_1(n))$ is the set introduced in Proposition~\ref{prop:yEbounded}.
Due to 
Proposition~\ref{prop:Wucont}, we therefore have that
\begin{align}\label{tconv-1}
	\lim_{n\to\infty} \int_\Omega W_{\eps_1(n)}(x,\nabla y_n)\,dx = \int_\Omega W(x,\nabla y)\,dx.
\end{align}
Moreover, by weak lower semicontinuity of the convex functional $E^{reg}_\sigma$,
\begin{align}\label{tconv-2}
	\liminf_{n\to\infty} E^{reg}_\sigma(y_n)\geq E^{reg}_\sigma(y).
\end{align}
Besides \eqref{tconv-1} and \eqref{tconv-2}, 
it also trivially holds that $E^{CN}_{\eps_2(n)}\geq 0$. We thus get
$\liminf E^{h(n)}_{\sigma,\eps(n)}(y_n)\geq E_\sigma(y)$ as asserted, provided that
$y$ satisfies the Ciarlet-Ne\v{c}as condition.
For the proof of the latter, first observe that due to Proposition~\ref{prop:yEbounded}, \eqref{yconstants} is satisfied  
for each $y_n$ (in place of $y$), and Theorem~\ref{thm:cnsoft} 
is applicable, at least as long as $n$ is also large enough so that $\eps_2(n)\leq \bar\eps$.
We also know that
$E^{CN}_{\eps_2(n)}(y_n)$ is bounded from above because $e_0<+\infty$ and the other terms in $E^{h(n)}_{\sigma,\eps(n)}(y_n)$ are bounded from below.
Since $y_n\to y$ in $L^\infty$, Proposition~\ref{prop:CNstab} and Remark~\ref{rem:CNstab} therefore yield that
$\{N_y\circ y\}=P_y(0)=\emptyset$. In particular, $y$ is globally invertible and the Ciarlet-Ne\v{c}as condition holds for $y$.

{\bf (ii) Existence of a (strongly converging) ``recovery sequence'':}
Let $y\in W^{2,s}(\Omega;\RR^d)$. 
If $E_\sigma(y)=+\infty$,
any sequence $(y_n)\subset Y_{h(n)}$ with $y_n\to y$ in $W^{2,s}$ is suitable, in particular
$\lim E^{h(n)}_{\sigma,\eps(n)}(y_n)=+\infty=E_\sigma(y)$ as a consequence of (i).
We thus may assume that $E_\sigma(y)<+\infty$.
In particular, the Ciarlet-Ne\v{c}as condition 
holds for $y$ and
Proposition~\ref{prop:yEbounded} (for the case $h=0$) is applicable with $K:=E_\sigma(y)$, 
which yields \eqref{yconstantsB}.
Due to \eqref{yconstantsB}, Lemma~\ref{lem:biLi} can be applied. Hence, $y$ is also locally bi-Lipschitz, and
for such maps, the Ciarlet-Ne\v{c}as condition is equivalent to global invertibility in the classical sense.

Nevertheless, $y$ may still exhibit self-contact on the boundary. 
This is problematic for our construction because 
we might lose control of $E^{CN}_{\eps_2(n)}$. We therefore first artificially create a little gap around the boundary, 
with the ultimate goal of finding a suitable sequence $y_n\in Y_{(h(n))}$
approximating $y$ and its energy while $E^{CN}_{\eps_2}(y_n)=0$ for all $n$ (large enough). 
To create this gap,
let $\Psi_j:\Omega\to \Omega$ 
be a sequence of globally invertible maps of class $C^\infty$ which
``shrink'' $\Omega$ into a slightly smaller set and converge to the identity, more precisely,
\begin{align}\label{tconv-shrink}
\bald
  \Psi_j(\Omega)\subset \Omega^{(j)}:=\mysetr{x\in\Omega}{\dist{x}{\partial\Omega}>\frac{1}{j}},\\
	\norm{\Psi_j-id}_{W^{2,\infty}}\underset{j\to\infty}{\To} 0.	
\eald
\end{align}
Such maps $\Psi_j$ are easy to define locally in a neighborhood of a boundary point where $\partial\Omega$ can be represented as the graph of a Lipschitz function.
Globally, the local pieces can be glued together using a decomposition of unity; we omit the details.
As a consequence of \eqref{tconv-shrink}, 
\begin{align}\label{tconv-9}
	z_j\underset{j\to\infty}{\To} y~~\text{in $W^{2,s}$ (and $W^{1,\infty}$)}\quad \text{for}~z_j:=y\circ \Psi_{j}.
\end{align}
The function $z_j$, like other possible perturbations $z$ of $y$ in $W^{2,s}$, 
inherits \eqref{yconstantsB} with slightly modified constants:
There is a constant $\kappa>0$ such that
for all $z\in W^{2,s}\subset C^{1,\alpha} \subset W^{1,\infty}$ with $\norm{z-y}_{W^{2,s}}\leq \kappa$,
\begin{align}\label{tconv-10}
	\text{$\det \nabla z\geq \tilde{\delta}>0$}
	~~~\text{and}~~~
	\text{$\abs{\nabla z}\leq \tilde{M}_1$}~~~\text{on $\Omega$,} 
	~~~\text{and}~~~\norm{\nabla z}_{C^\alpha(\Omega)}\leq \tilde{M}_2,&
\end{align}
where $\tilde{\delta}:=\frac{1}{2}\delta$, $\tilde{M}_1:=M_1+1$ and $\tilde{M}_2:=M_2+1$.
In particular, $z=z_j$ is admissible if $j$ is sufficiently large.
Due to \eqref{tconv-9}, \eqref{tconv-10} and Proposition~\ref{prop:Wucont}, 
\begin{align}\label{tconv-12}
\bald
	\abs{E^{el}_{\eps_1(n)}(z)
	-E^{el}(y)}\leq \abs{\Omega}\eps_1(n)+\theta(c\norm{z-y}_{W^{2,s}}),&\\
	\quad\text{for all $z\in W^{2,s}$ with $\norm{z-y}_{W^{2,s}}\leq \kappa$.}&
\eald
\end{align}
Here, $c>0$ denotes the embedding constant 
such that $c\norm{z-y}_{W^{2,s}}\leq \norm{z-y}_{W^{1,\infty}}$,
and $\kappa>0$ is the constant governing the admissible functions in \eqref{tconv-10}.

Next, we claim that essentially due to the gap
around the boundary we have for any fixed $j$, there exists $n_0=n_0(j)$
such that for all $n\geq n_0$,
$E^{CN}_{\eps_2(n)}(z_j)=0$, and the same also holds for close enough
perturbations $z$ of $z_j$:
\begin{align}\label{tconv-15}
	E^{CN}_{\eps_2(n)}(z)=0\quad\text{for $n\geq n_0(j)$, $z$ with $\norm{z-z_j}_{W^{2,s}}<2\cE(h(n))$},
\end{align}
where $\cE(h(n))$ is the maximal Galerkin approximation error from \eqref{fespaceapprox}.
We choose $n_0=n_0(j)$ (w.l.o.g.~increasing) such that for all $n\geq n_0$,
\begin{align}\label{tconv-13}
	\frac{1}{j}\geq s(n):=\frac{2M_1^{d-1}}{\delta}t(n),\quad t(n):=R \eps_2(n)+2\cE(h(n)). 
\end{align}
and $s(n)\leq \varrho$. Here, $R:=\diam(\Omega)$, $\delta>0$, $M_1>0$ are the constants from \eqref{yconstantsB}, and $\varrho$ 
is the radius of local invertibility from 
Lemma~\ref{lem:biLi}. 
We claim that for any such $n$, 
all $z$ with $\norm{z-z_j}_{W^{2,s}}<2\cE(h(n))$ 
and any pair $x_1,x_2\in\Omega$ with $\abs{x_1-x_2}>\frac{\varrho}{2}$,
\begin{align}\label{tconv-14}
	\abs{z(x_1)-z(x_2)}\geq \abs{z_j(x_1)-z_j(x_2)}-2\cE(h(n))
	\geq	R \eps_2(n).
\end{align}
Given \eqref{tconv-14}, Corollary~\ref{cor:whenECNvanishes} immediately implies \eqref{tconv-15}.
We prove \eqref{tconv-14} indirectly (the second inequality; the first follows from the triangle inequality). Suppose that 
\begin{align}\label{tconv-16}
	\abs{z_j(x_1)-z_j(x_2)}	<	t(n)=R \eps_2(n)+2\cE(h(n)).
\end{align}
Since $\Psi_j(x_1)\in \Omega_j\subset\Omega$ 
(and $\Psi_j(x_2)$ likewise), we have 
that
\begin{align}\label{tconv-17}
	\dist{\Psi_j(x_1)}{\partial\Omega} >\frac{1}{j} \geq s(n) 
\end{align}
by \eqref{tconv-shrink} and \eqref{tconv-13}.
But on the other hand, if \eqref{tconv-17} holds, then
$B_{s(n)}(\Psi_j(x_1))\subset \Omega$. Since $y$ is bi-Lipschitz on $B_{s(n)}(\Psi_j(x_1))$ 
($s(n)\leq \varrho$; also notice that the constant factor linking $s(n)$ and $t(n)$ is exactly the constant from the lower bound in \eqref{lemIMT-biLi}),
we infer from \eqref{tconv-16} that
\[
	z_j(x_2)\in B_{t(n)}(z_j(x_1))=B_{t(n)}(y(\Psi_j(x_1))) \subset y(B_{s(n)}(\Psi_j(x_1))).
\]
This is impossible because $z_j(x_2)=y(\Psi_j(x_2))$ and
$y$ and $\Psi_j$ are injective, which concludes the proof of \eqref{tconv-15}.

In particular, we may use $z=z^{(h(n))}_j$ in \eqref{tconv-15}
with a sufficiently close Galerkin approximation $z^{(h(n))}_j\in Y_{h(n)}$ 
of $z_j$. Now take $j(n)\to\infty$ as $n\to\infty$, but slow enough so that $n_0(j(n))\leq n$.
With this choice, 
\[
	\norm{y_n-z_{j(n)}}_{W^{2,s}}<2\cE(h(n))\underset{n\to\infty}{\to}0\quad \text{for}~y_n:=z^{(h(n))}_{j(n)}\in Y_{h(n)}
\]
which together with \eqref{tconv-9} implies that $y_n\to y$ in $W^{2,s}$. 
By Proposition~\ref{prop:SGucont},
we see that
\begin{align}\label{tconv-21}
	E^{reg}_\sigma(y_n)
	\to E^{reg}_\sigma(y)
	\quad\text{as $n\to\infty$},
\end{align}
and from \eqref{tconv-12}, we infer that
\begin{align}\label{tconv-22}
	E^{el}_{\eps_1(n)}(y_n)
	\to E^{el}(y)
	\quad\text{as $n\to\infty$}.
\end{align}
Finally, \eqref{tconv-15} yields that
\begin{align}\label{tconv-23}
	E^{CN}_{\eps_2(n)}(y_n)=0\quad\text{for all $n$ large enough}
\end{align}
(more precisely, $n$ large enough so that \eqref{tconv-10} holds 
for $z=z_{j(n)}$ and $z=y_n$).
Altogether, $E_{\sigma,\eps(n)}^{h(n)}(y_n)\to E_\sigma(y)$ as asserted.
\end{proof}

\section{Numerical experiments \label{sec:numerics}}


We consider $d=2$ 
and the approximate deformation $$y = (y_1, y_2) \in W^{2,s}(\Omega;\RR^2)$$ is searched for as the (ideally global) minimizer of 
\begin{align}
	E_{\eps,\sigma,\mu}(y)=E^{el}_{\eps_1}(y)+\mu E^{CN}_{\eps_2}(y)+E^{reg}_{\sigma}(y)+E^{body}(y) \label{energy:total}
\end{align}
over a finite dimensional space. 
Both deformation components $y_1, y_2 $ are discretized in the space of the Bogner-Fox-Schmit (BFS) rectangular elements \cite{BoFoSc65} that provide continuous differentiability of approximations. 
Here,
\[
	E^{body}(y):=\int_\Omega g_{\rm body}(x)\cdot y(x)\,dx
\]
is the energy contribution of a body force of type \eqref{forces-body}, with
$g_{\rm body}$ as specified below (in Model II; $g_{\rm body}=0$ in Model I). We here
fix the constants
\[	
	\eps_1:=\frac{1}{100},~~\sigma:=1,~~s:=4,~~p:=4~~\text{and} ~~q:=6
\]
(in particular, \eqref{pqsd} is satisfied for $d=2$).
The Ciarlet-Ne\v{c}as penalty term $E^{CN}_{\eps_2}$
is included with a weight $\mu$ for which we tested several values between $0$ and $1$; notice that $\mu=0$ completely switches off the penalty term.
As before, $E^{CN}_{\eps_2}$ is given by \eqref{sCN1}, and 
for the computational tests, we chose $g(t)=t$,
$\beta=1.8$ or $\beta=2.2$ (two values close to $d=2$, the threshold for Corollary~\ref{cor:invertibility}),
and also experiment with different values for $\eps_2$, typically adapted to the grid size $h$.
As an example for the higher order term, we employ
\[
	E^{reg}_{\sigma}(y)=\sigma \int_\Omega |D^2 y(x)|^s\,dx.
\]
Finally, we use the elastic part of the energy
$$E^{el}_{\eps_1}(y)=\int_\Omega W^{el}_{\eps_1}(\nabla y(x))\,dx$$
with density chosen as follows, for all $F\in \RR^{2\times 2}$ and $J:=\det F$:
\[
\begin{aligned}
	W^{el}_{\eps_1}(F):=&|F|^{p}-d^{\frac{p}{2}}\\
	&-\frac{p}{q}d^{\frac{p}{2}-1}+\frac{p}{q} d^{\frac{p}{2}-1}
	\left\{
	\begin{alignedat}{2}
	&J^{-q} &&~~\text{if $J\geq \eps_1$},\\
	&-q\eps_1^{-q-1}(J-\eps_1)+\eps_1^{-q} &&~~\text{if $J<\eps_1$}.
	\end{alignedat}
	\right.
	\end{aligned}
\] 
Here, recall that $d=2$, although the example could also be used for higher dimensions. 
Above, 
$$\nabla y(x)\in \RR^{2\times 2}, \quad D^2 y(x)\in \RR^{2\times 2 \times 2}$$ and denote the gradient and the Hessian of 
$y:\Omega\to \RR^2$, respectively,
and the norms $\abs{\cdot}$ are euclidean (Frobenius): $\abs{F}:=\big(\sum_{i,j} F_{ij}^2\big)^{\frac{1}{2}}$ for $F=(F_{ij})\in \RR^{2\times 2}$,
$\abs{G}:=\big(\sum_{i,j,k} G_{ijk}^2\big)^{\frac{1}{2}}$ for $G=(G_{ijk})\in \RR^{2\times 2\times 2}$. 
\begin{rem}
$W^{el}_{\eps_1}$ is polyconvex and frame indifferent. Moreover, if $0<\eps_1<1$,
$W^{el}_{\eps_1}(F)\geq 0$ with equality if and only if $F$ is a rotation matrix.
The second part of $W^{el}_{\eps_1}$ is a $C^1$-function in $J=\det F$, and the two cases define a truncated version of $J\mapsto J^{-q}$ using an affine extension for $J<\eps_1$.
For the shapes, body forces and boundary conditions used in our examples, there is no incentive for the material to create spots with high local compression. As a consequence, 
the results are independent of the choice of $\eps_1<<1$, 
as the computed deformations stay far away from the regime $\det \nabla y<\eps_1$ anyway (as the optimal deformations are expected to), 
for any reasonably small choice of $\eps_1$. 
\end{rem}
\begin{rem}
For the actual computations, we have replaced the non-differentiable functions $[\cdot]^+$ and $g(\abs{\cdot})$ appearing in the definition of $E^{CN}_{\eps_2}$ by 
$C^1$-approximations $h([\cdot]^+)$ and $h(g(\abs{\cdot}))$,  where 
\[
  h(x) : = \left.
  \begin{cases}
    0, & \text{for }  x \leq 0, \\
    x^2/(2a) & \text{for } 0 \leq x \leq a, \\
    x-a/2, & \text{for } a \leq x 
  \end{cases}
  \right. 
	\]
	for some small parameter $a>0$ (we set $a=1/10$ in all computations). This smoothing is introduced in order to avoid the risk that the actual minimizer sits at a point where the functional does not have a well defined derivative. The latter 
could cause serious problems for the solver. While changing $g$ is fully covered by the theory developed above and can at most affect constants appearing in the theoretical results, changing $[\cdot]^+$ even slightly around zero can potentially effect the scaling of $E^{CN}_{\eps_2}$ as $\eps_2\to 0$. 
But in practice the asymptotics as $\eps_2\to 0$ cannot easily be observed numerically anyway.
\end{rem}

\subsection{Model I}
We consider a reference configuration $\Omega = \Omega_1 \, \cup \, \Omega_2 \subset \RR^2$ which consists of two rectangular boxes 
$\Omega_1=(0,2) \times (0.5, 1.5)$ and $\Omega_2=(0,2) \times (-1.5, 0.5)$. See Figure \ref{fig:model1} for illustration. 
\begin{figure}[h]
    \begin{minipage}[c]{.32\textwidth}
        \centering
	\includegraphics[width=\textwidth]{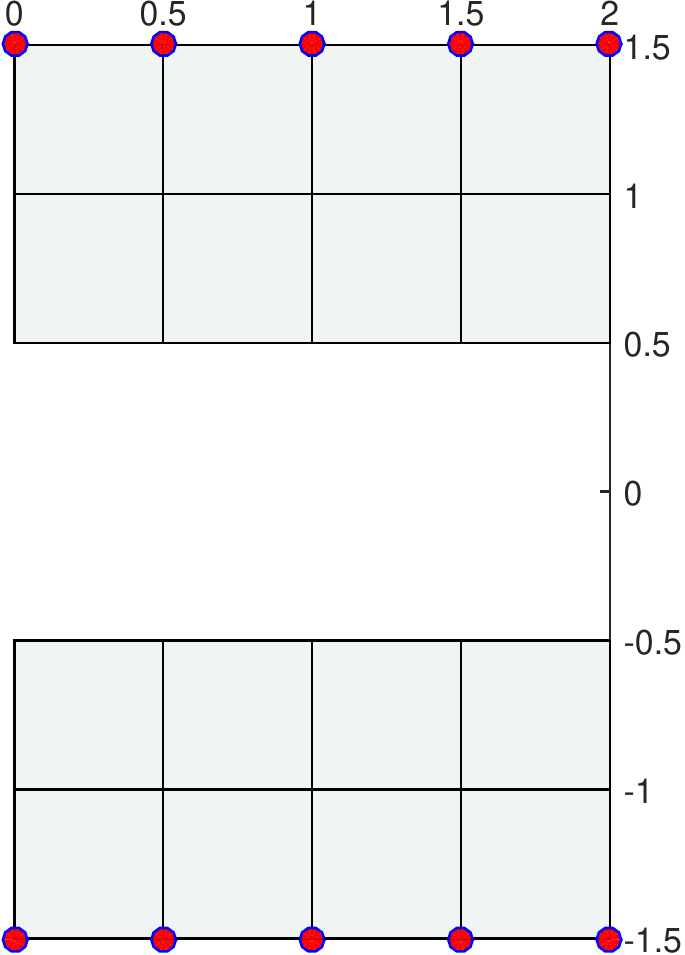}    
    \end{minipage}
   \caption{Model I : Coarse rectangular mesh with Dirichlet boundary nodes indicated by full  dots.}
   \label{fig:model1}
\end{figure}
We impose nonhomogeneous Dirichlet boundary conditions on a part of the boundary:
\begin{eqnarray*}
& y_1=x_1 + m_1, y_2 = x_2 - m_2 \quad \mbox{for } (x_1, x_2) \in \Lambda_{D,1}, \\
& y_1=x_1, \qquad  y_2 = x_2 + m_2 \quad \mbox{for } (x_1, x_2) \in \Lambda_{D,2},
\end{eqnarray*}
where $\Lambda_{D,1} :=(0,2) \times \{ 1.5 \}$ and $\Lambda_{D,1} :=(0,2) \times \{ -1.5 \}$ and $m_1, m_2 \in \RR$ are parameters. There is no linear body force term considered in this model. We consider a sequence of minimization problems with parameters
$$ m_2 \in \{ 0.4, 0.5, 0.6, 0.7 \} $$
and the same parameters 
$m_1= 0.2$ 
and $\mu =1$ in two different cases
$$\epsilon_2=1/4, \quad \epsilon_2=1/8.$$

Figure \ref{fig:energies_vs_m2}  displays how much the total energy $E_{\eps,\sigma,\mu}(y)$ and the scaled penetration energy $E^{CN}_{\eps_2}(y)$ depend on the parameter $m_2$. Unsurprisingly, both the energy and the influence of $E^{CN}_{\eps_2}(y)$ grow the larger $m_2$ gets, i.e., the more the two pieces are pushed against each other by the boundary conditions.
Since there is no linear body force term considered in this model, the difference energy $E_{\eps,\sigma,\mu}(y) - E^{CN}_{\eps_2}(y)$ converts to $E^{el}_{\eps_1}(y)$ and $E^{reg}_{\sigma}(y)$. Figure \ref{fig:model1_optimization} shows a few resulting optimized domains. 
Choosing $\eps_2=1/8$ is more or less the smallest reasonable choice for $\eps_2$ given the grid size (cf.~Remark~\ref{rem:aura}). In this case, one can already see effects of errors due to the the numerical integration in the marginal density of $E^{CN}_{\eps_2}$, which is much more uniform and intuitive for $\eps_2=1/4$.

\begin{figure}[H]
\begin{minipage}[t]{.49\textwidth}
\centering
\includegraphics[width=\textwidth]{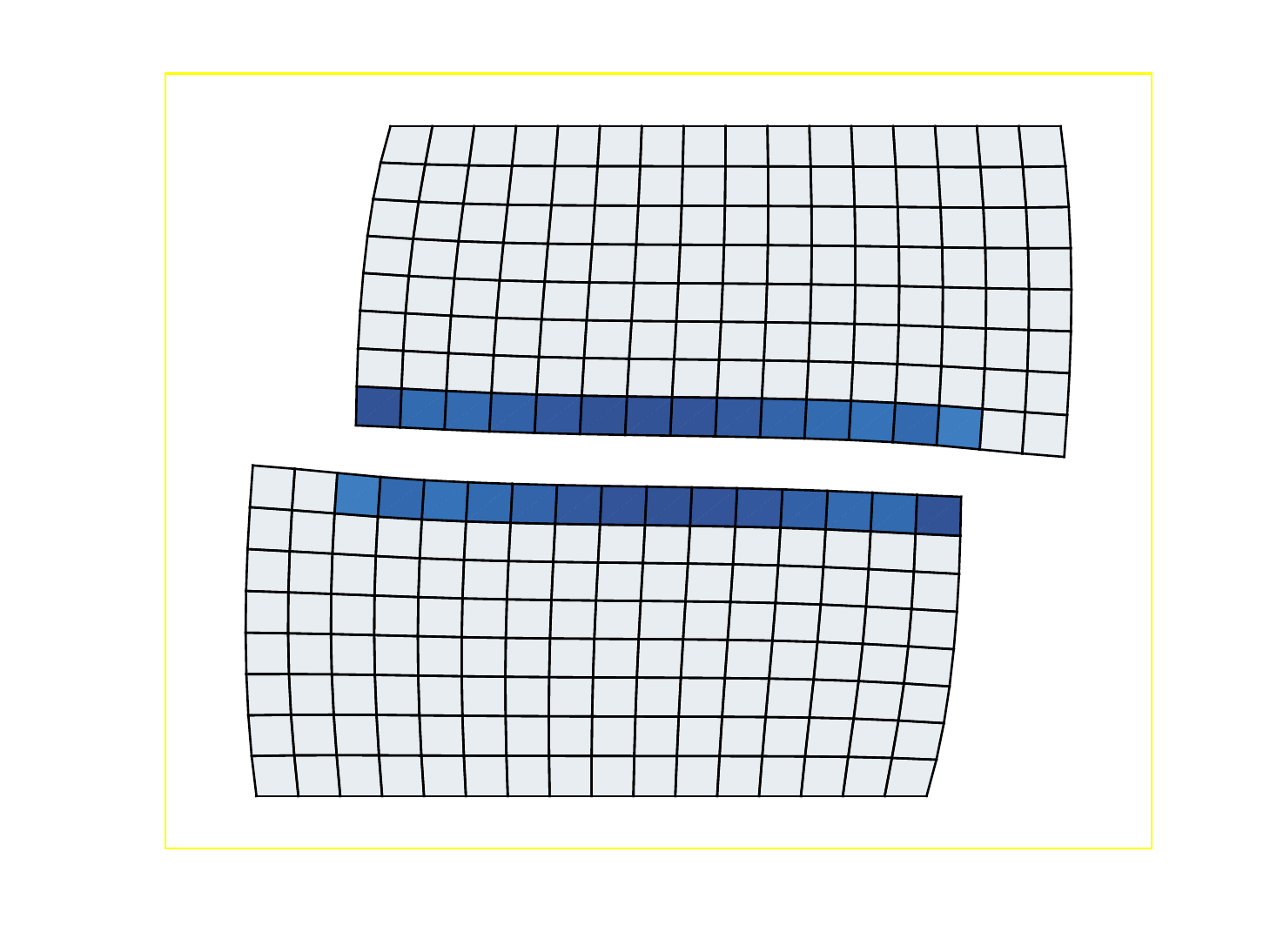}
\subcaption{$m_2=0.5, \epsilon_2=1/4$.}
\includegraphics[width=\textwidth]{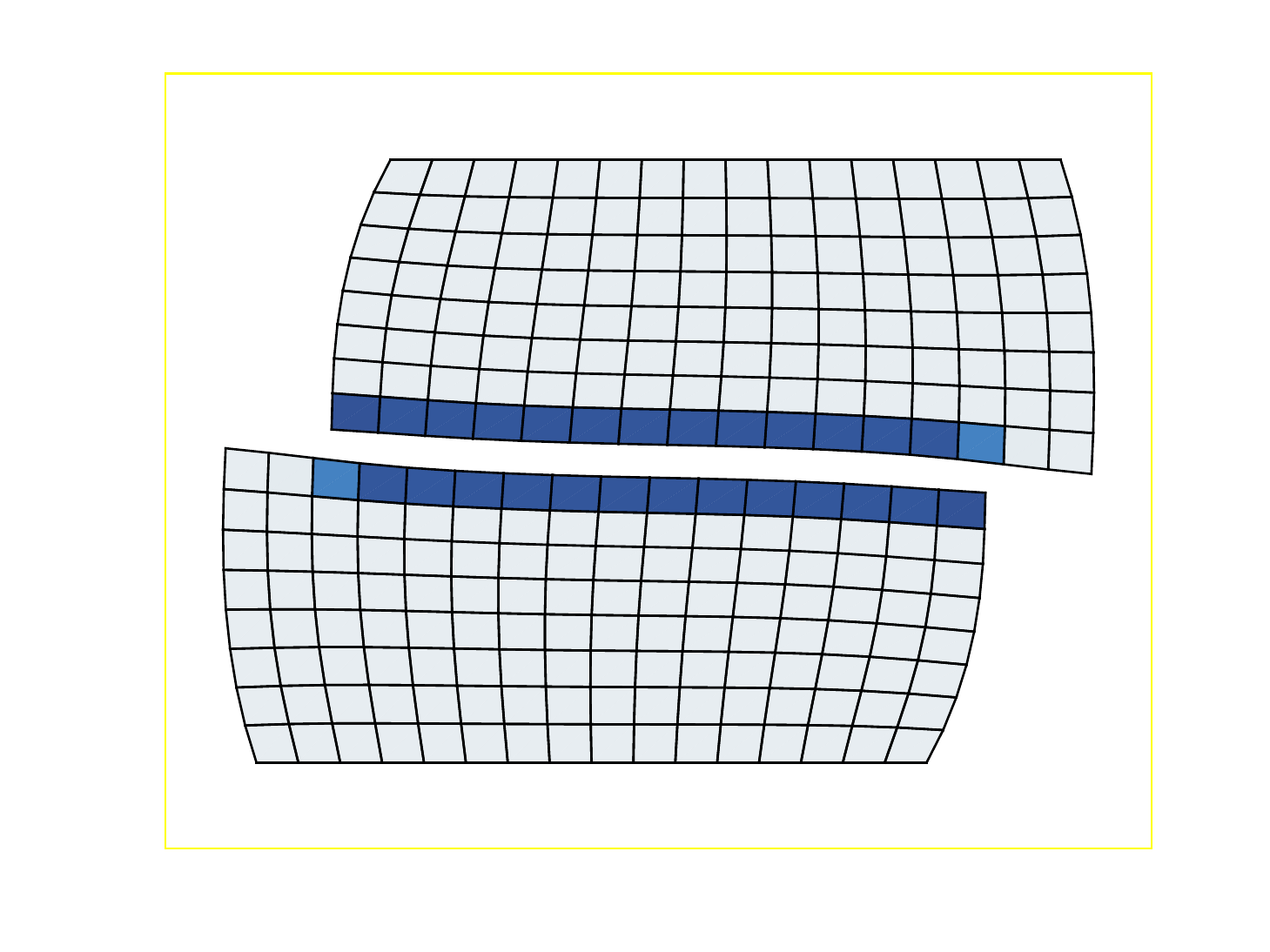}     
\subcaption{$m_2=0.6,  \epsilon_2=1/4$.}		
\includegraphics[width=\textwidth]{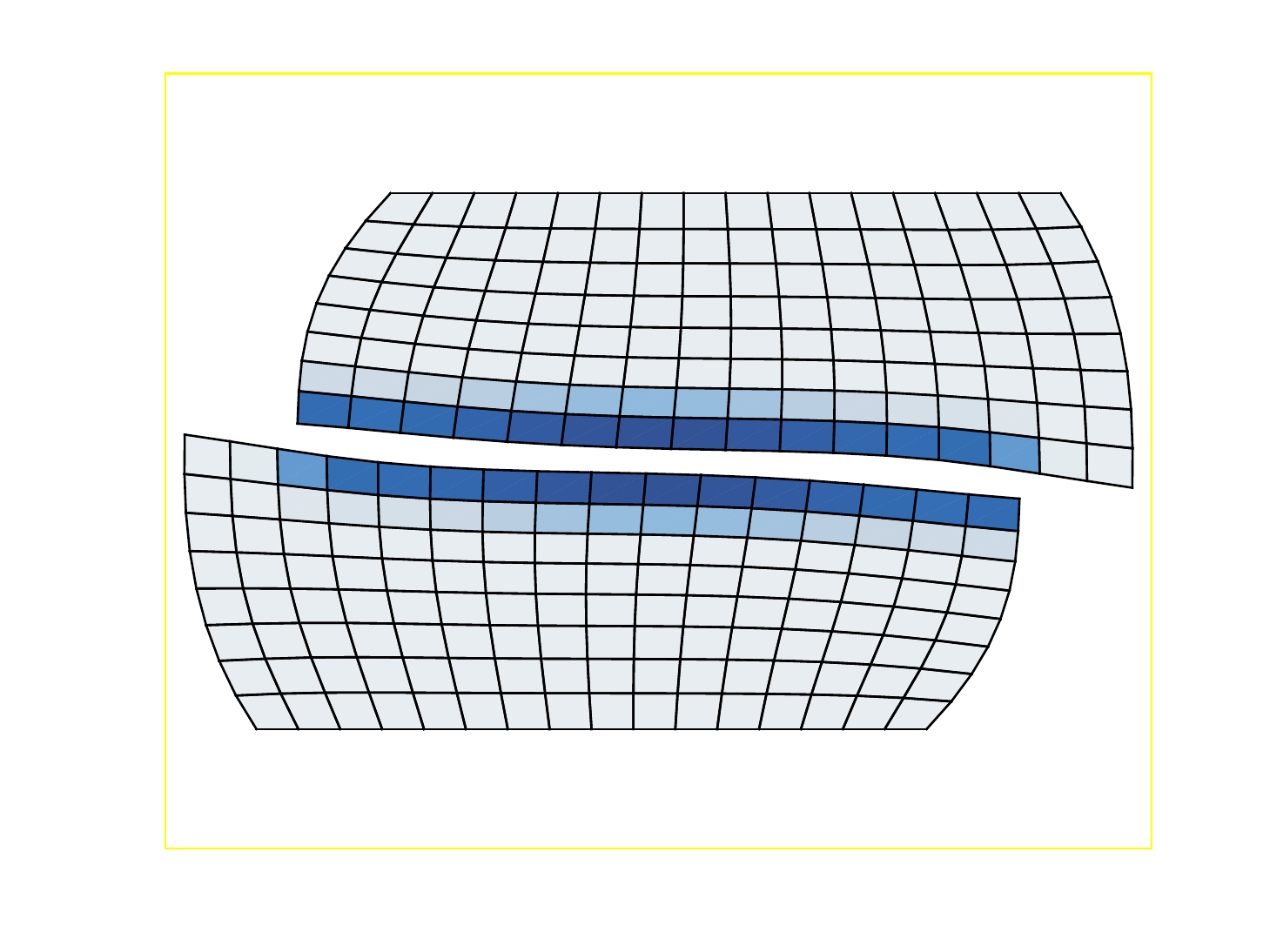}     
\subcaption{$m_2=0.7,  \epsilon_2=1/4$.}
\end{minipage}  
\begin{minipage}[t]{.49\textwidth}
\centering
\includegraphics[width=\textwidth]{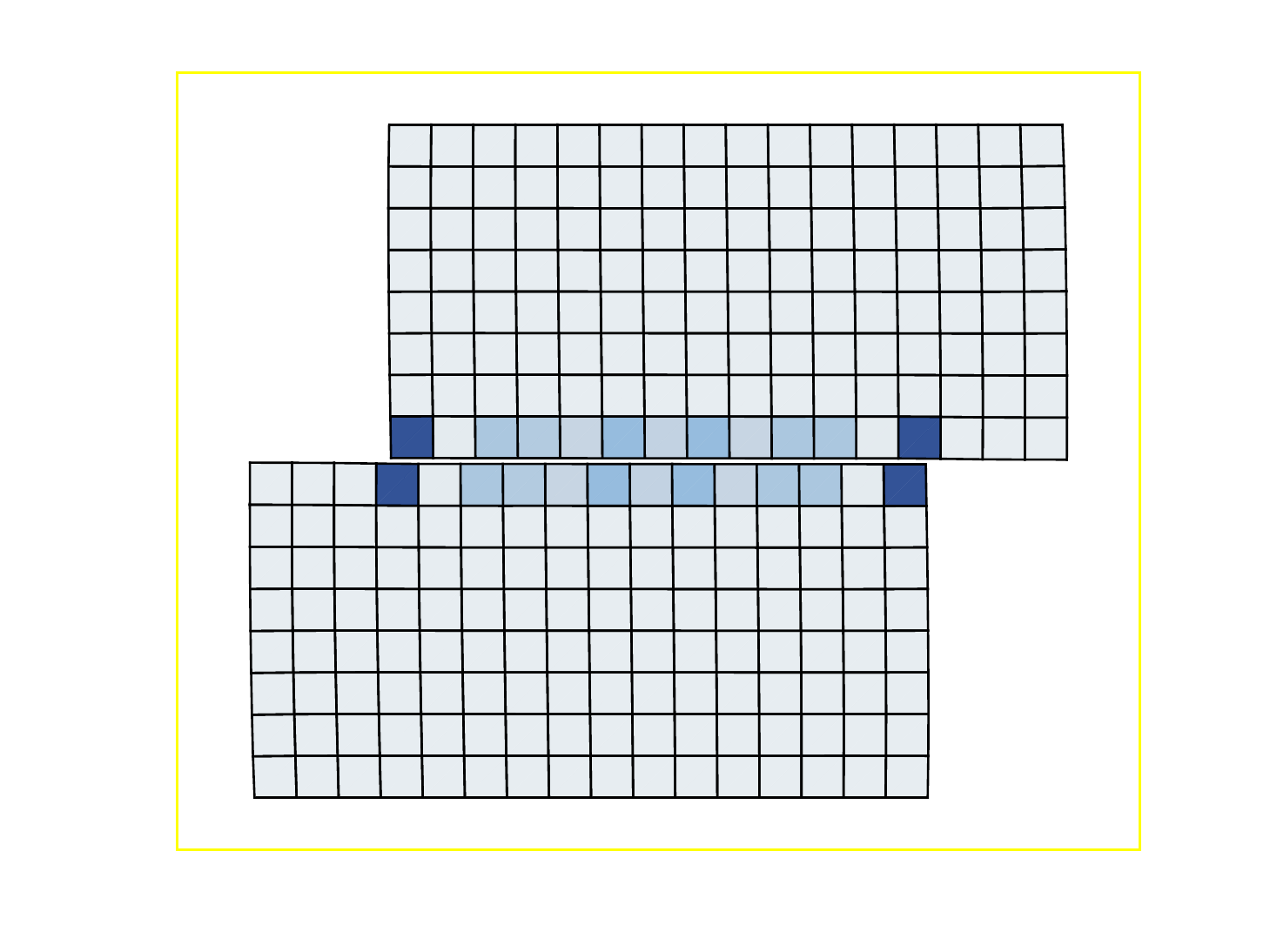}
\subcaption{$m_2=0.5, \epsilon_2=1/8$.}
\includegraphics[width=\textwidth]{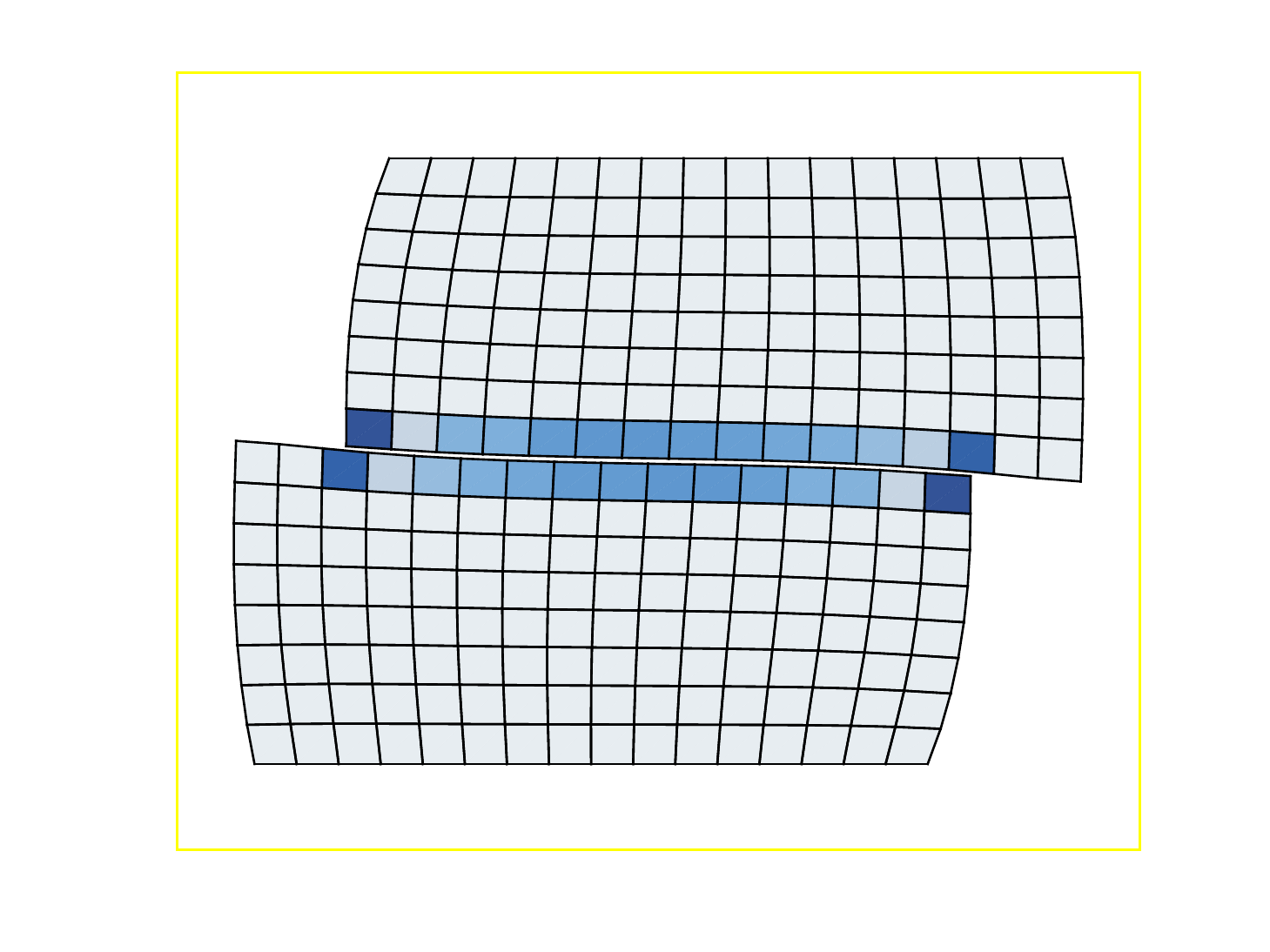}     
\subcaption{$m_2=0.6, \epsilon_2=1/8$.}		
\includegraphics[width=\textwidth]{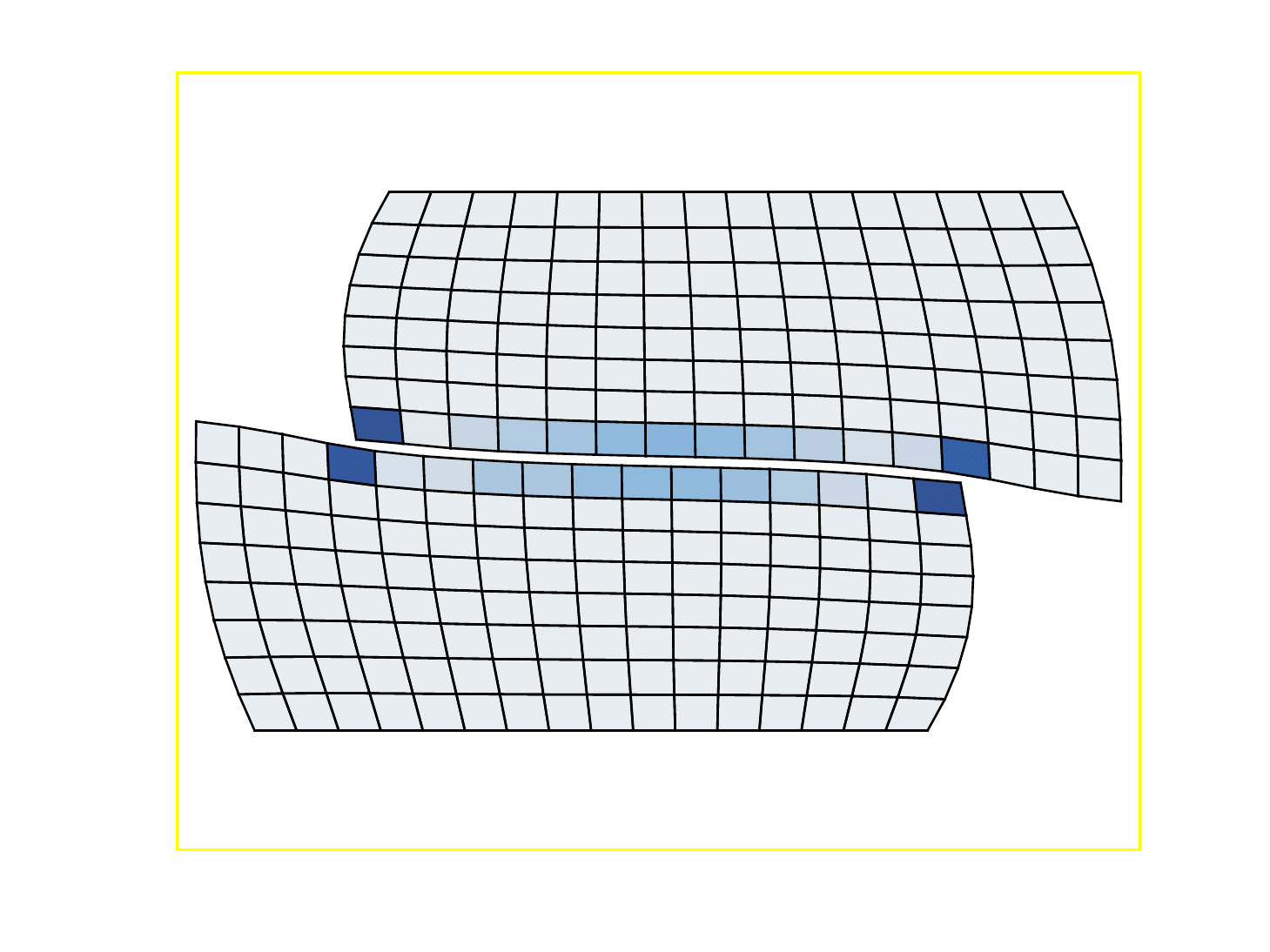}     
\subcaption{$m_2=0.7, \epsilon_2=1/8$.}
\end{minipage}  
\caption{Model I: Optimized deformed domains with underlying marginal density of $E^{CN}_{\eps_2}(y)$.}
\label{fig:model1_optimization}
\end{figure}

\begin{figure}[h]
    \begin{minipage}[t]{.8\textwidth}
        \centering
        \includegraphics[width=\textwidth]{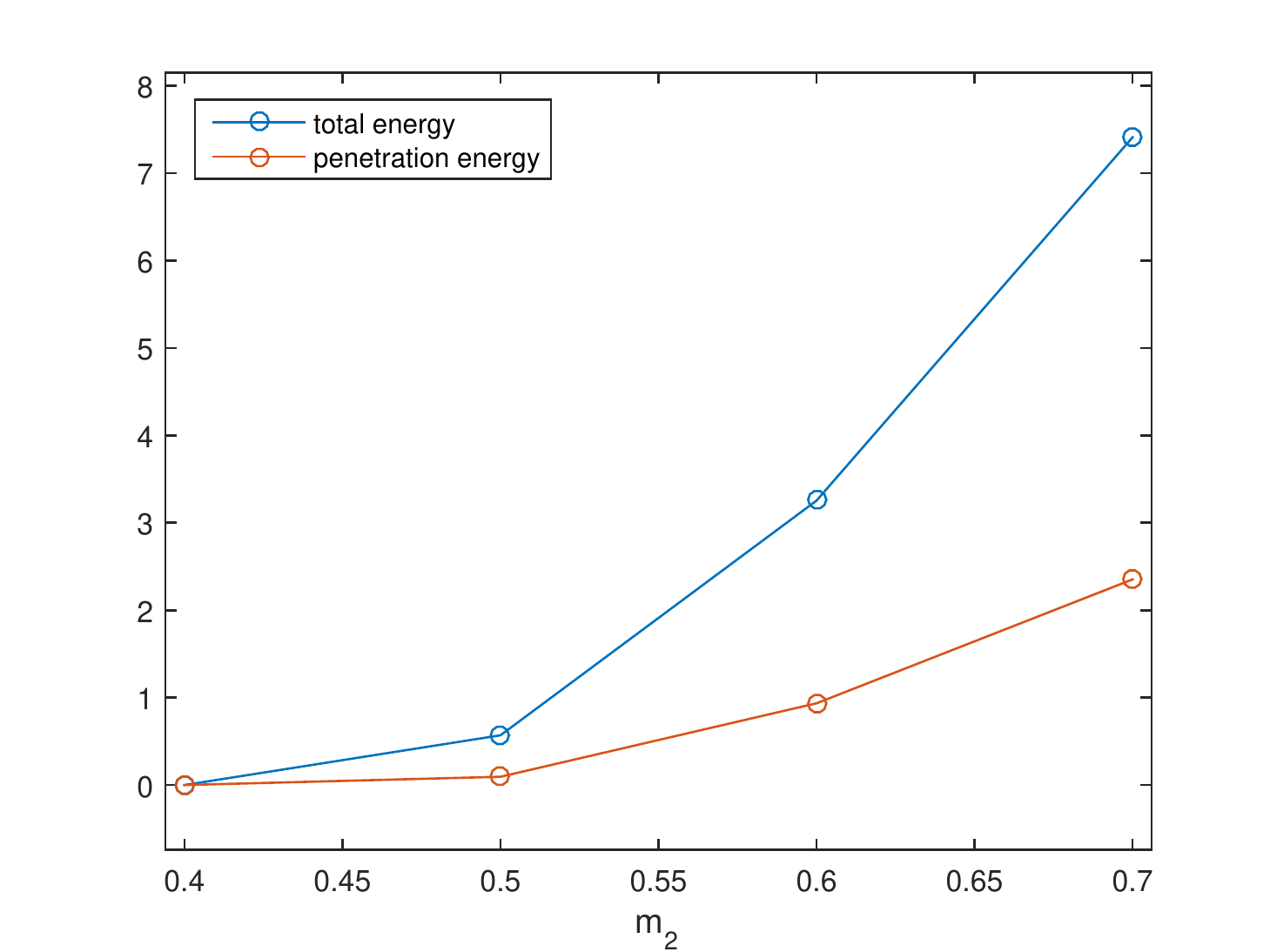}  
    \end{minipage}  
    \caption{Model I: Total energy $E_{\eps,\sigma,\mu}(y)$ and penetration energy $E^{CN}_{\eps_2}$ displayed versus the parameter $m_2$. We consider the case $\eps_2=1/4$.    }\label{fig:energies_vs_m2}
\end{figure}

\subsection{Model II}

We consider the same ``pincers'' domain $\Omega\subset \RR^2$ 
as in example of Subsection \ref{subs:example1} and subject $\Omega$ to the linear body force density
$$   g_{\rm body}(x_1, x_2) =\nu  (0, - H(x_1) \, \sign{(x_2)}) \quad \mbox{on } (x_1, x_2) \in \Omega,$$
where $H$ denotes the Heaviside step function. In addition, we impose Dirichlet boundary conditions on a part of the boundary:
$$ y_1=x_1, \quad  y_2=x_2 \quad \mbox{for } (x_1, x_2) \in \Lambda_D,$$
where $\Lambda_D := \{ 0 \} \times (-1/2, 1/2).$ See Figure \ref{fig:model1setup} for illustration.

\begin{figure}[h]
    \begin{minipage}[t]{.6\textwidth}
        \centering
				\includegraphics[width=\textwidth]{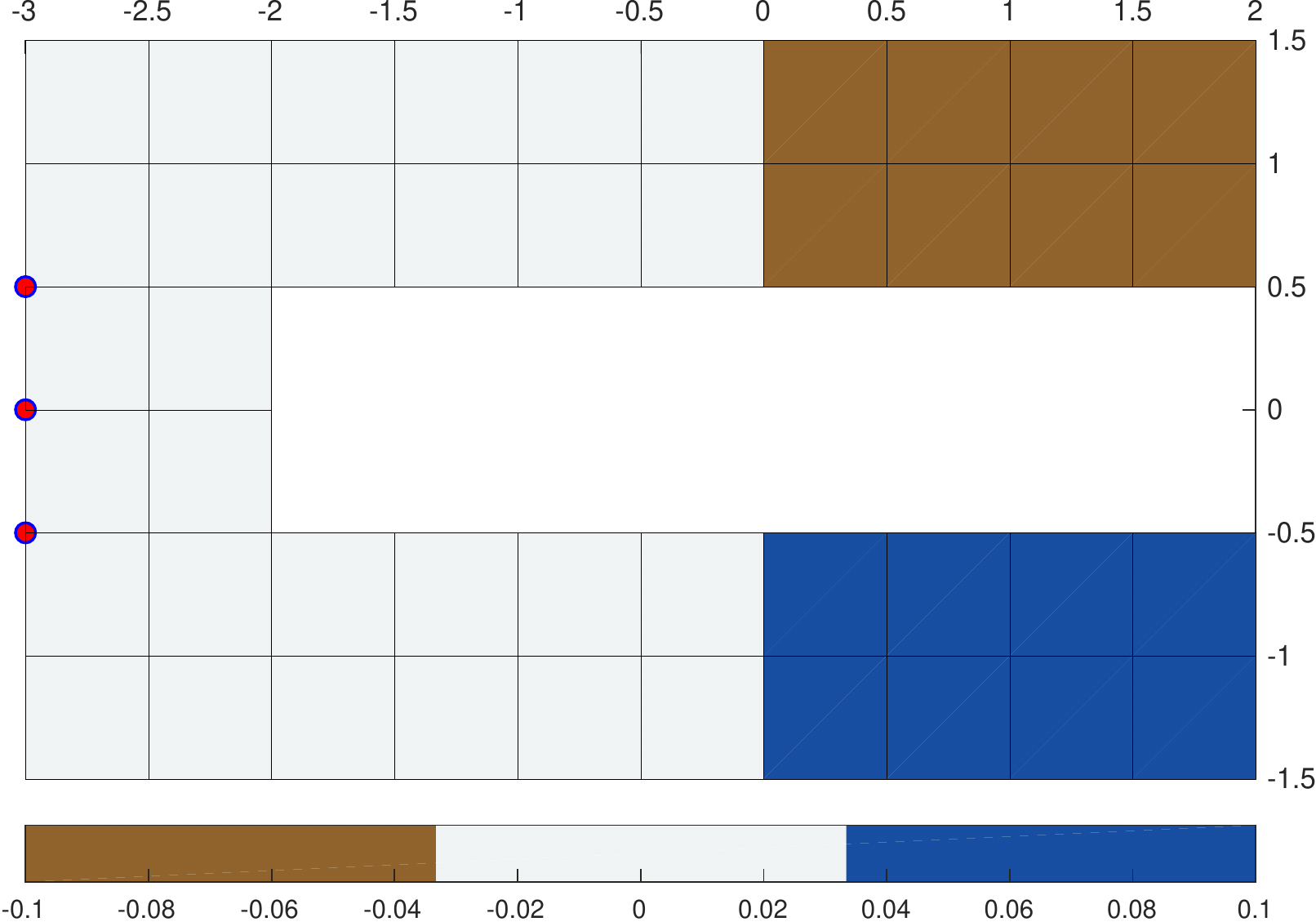}    
    \end{minipage}  
    \caption{Model II : body force density in $x_2$ direction forcing both pincers part to move against each other and Dirichlet boundary nodes indicated by full  dots.
}\label{fig:model1setup}
\end{figure}

In this model, we measure the response of an elastic continuum to various scaling of the Ciarlet-Ne\v{c}as penalty term $\mu E^{CN}_{\eps_2}$. We consider a sequence of minimization problems with various multipliers
$$ \mu \in \{10^{-6}, 
10^{-5}, 10^{-4}, 10^{-3}, 10^{-2}, 10^{-1}, 1  \} $$
 in two different cases
$$\epsilon_2=1/2, \quad \epsilon_2=1/4.$$
and the same loading given by $\nu = 0.2$. 
Figure \ref{fig:energies_vs_mu}  displays how much the total energy $E_{\eps,\sigma,\mu}(y)$ and the scaled penetration energy $\mu E^{CN}_{\eps_2}(y)$ depend on the multiplier $ \mu$. 

\begin{figure}[h]
    \begin{minipage}[t]{.9\textwidth}
        \centering
        \includegraphics[width=\textwidth]{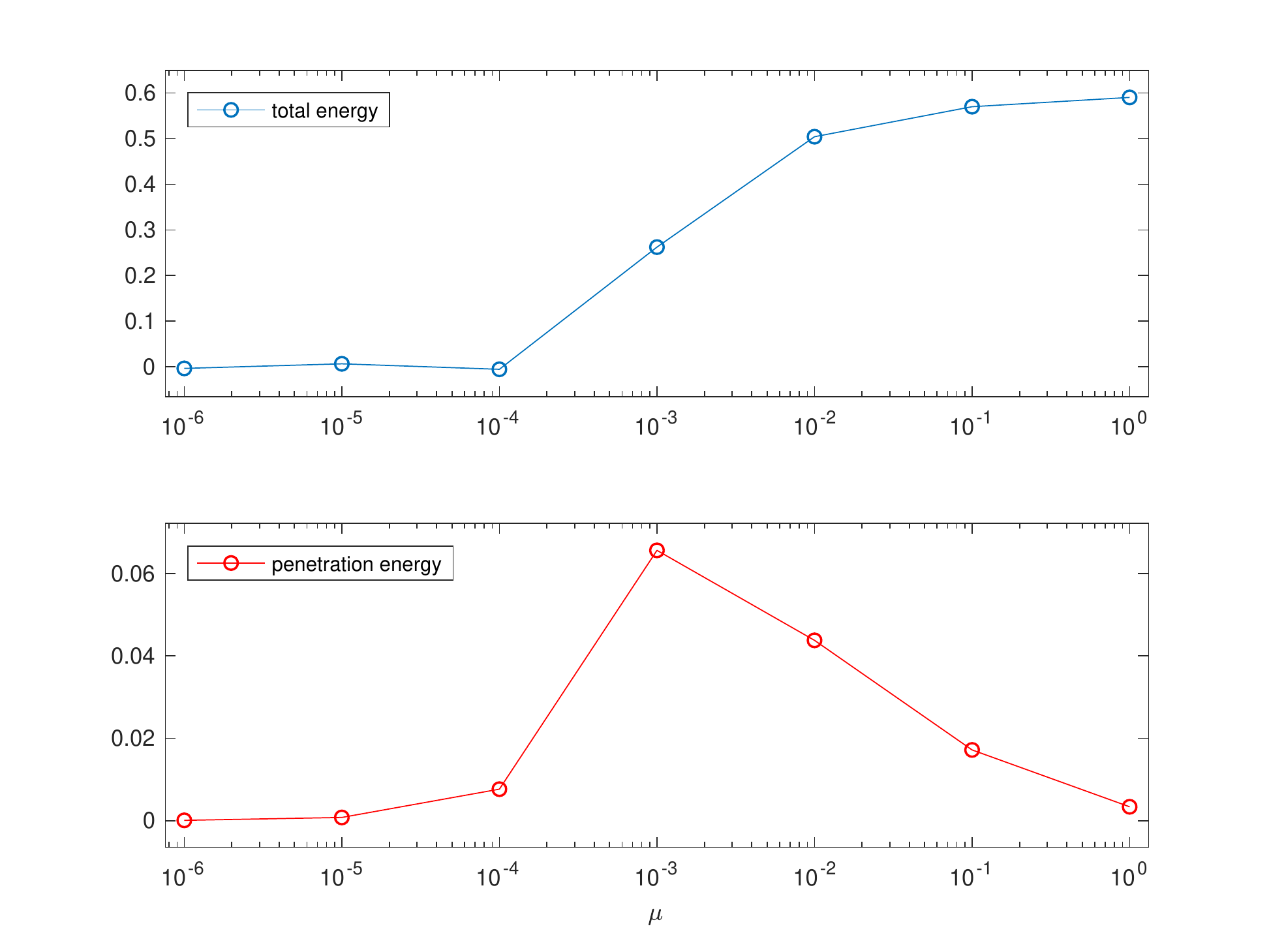} 
    \end{minipage}  
   \caption{Model II: Total energy $E_{\eps,\sigma,\mu}(y)$ and penetration energy $\mu E^{CN}_{\eps_2}$ displayed versus the parameter $\mu$. We consider the case $\eps_2=1/2$.}\label{fig:energies_vs_mu}
\end{figure}

For lower values of $\mu$ the scaled penetration term 
allows for a penetration of both pincers parts. For higher values of $\mu$ the scaled penetration term 
one can usually prevent penetration altogether. 
Here, recall that from the point of view of the theory, we only know for sure that reducing $\eps_2$ 
will eventually prevent penetration if the scaling exponent $\beta$ in $E^{CN}_{\eps_2}$ is big enough (Corollary~\ref{cor:invertibility}). Of course, at finite scales 
increasing $\mu$ has a similar effect, and as long as the grid size $h$ is fixed, we cannot arbitrarily reduce $\eps_2$.

\begin{figure}[H]
\begin{minipage}[t]{.45\textwidth}
\centering
\includegraphics[width=\textwidth]{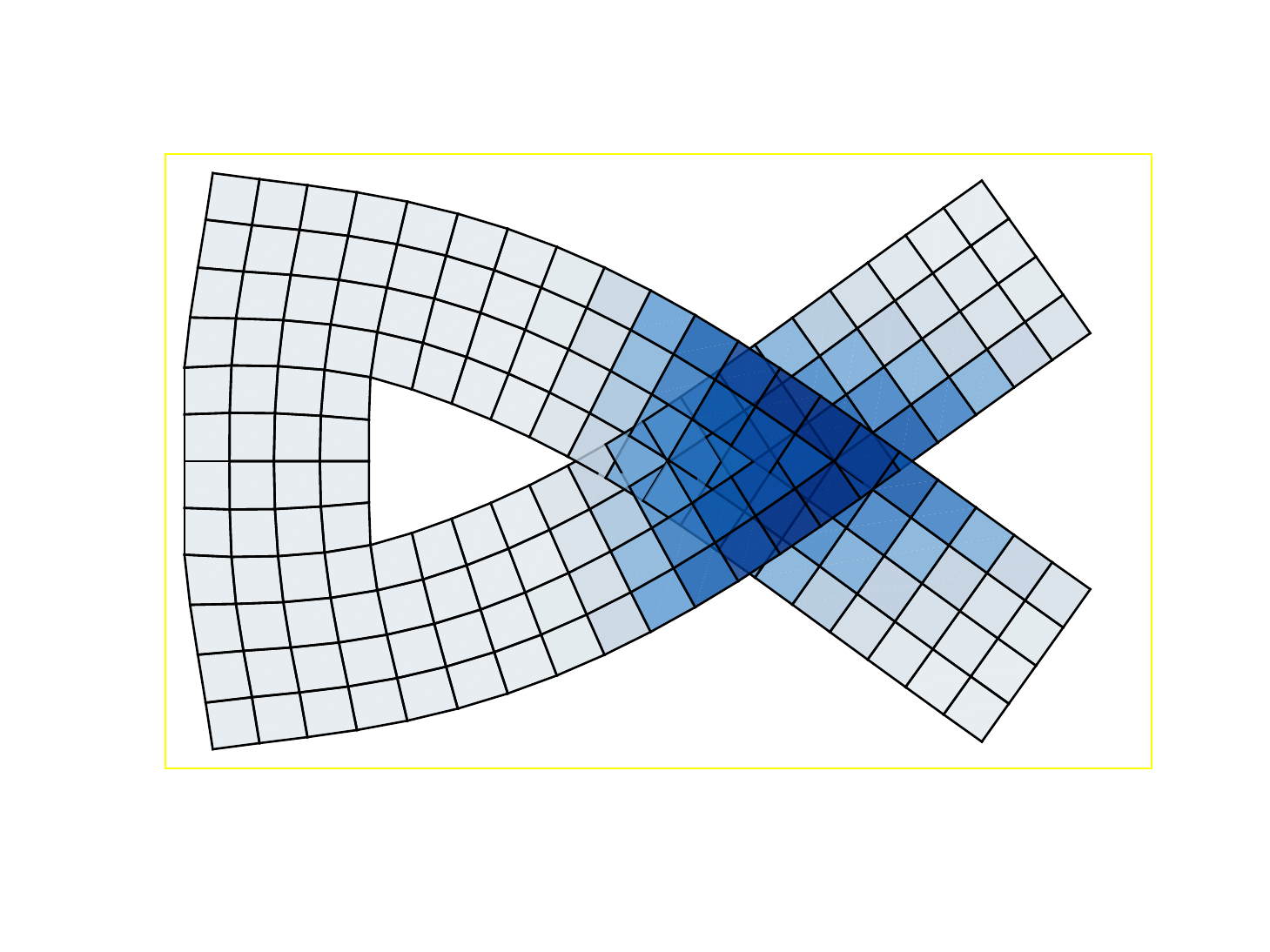} 
\subcaption{$\mu_2=10^{-4}, \epsilon_2=1/2$.}
\includegraphics[width=\textwidth]{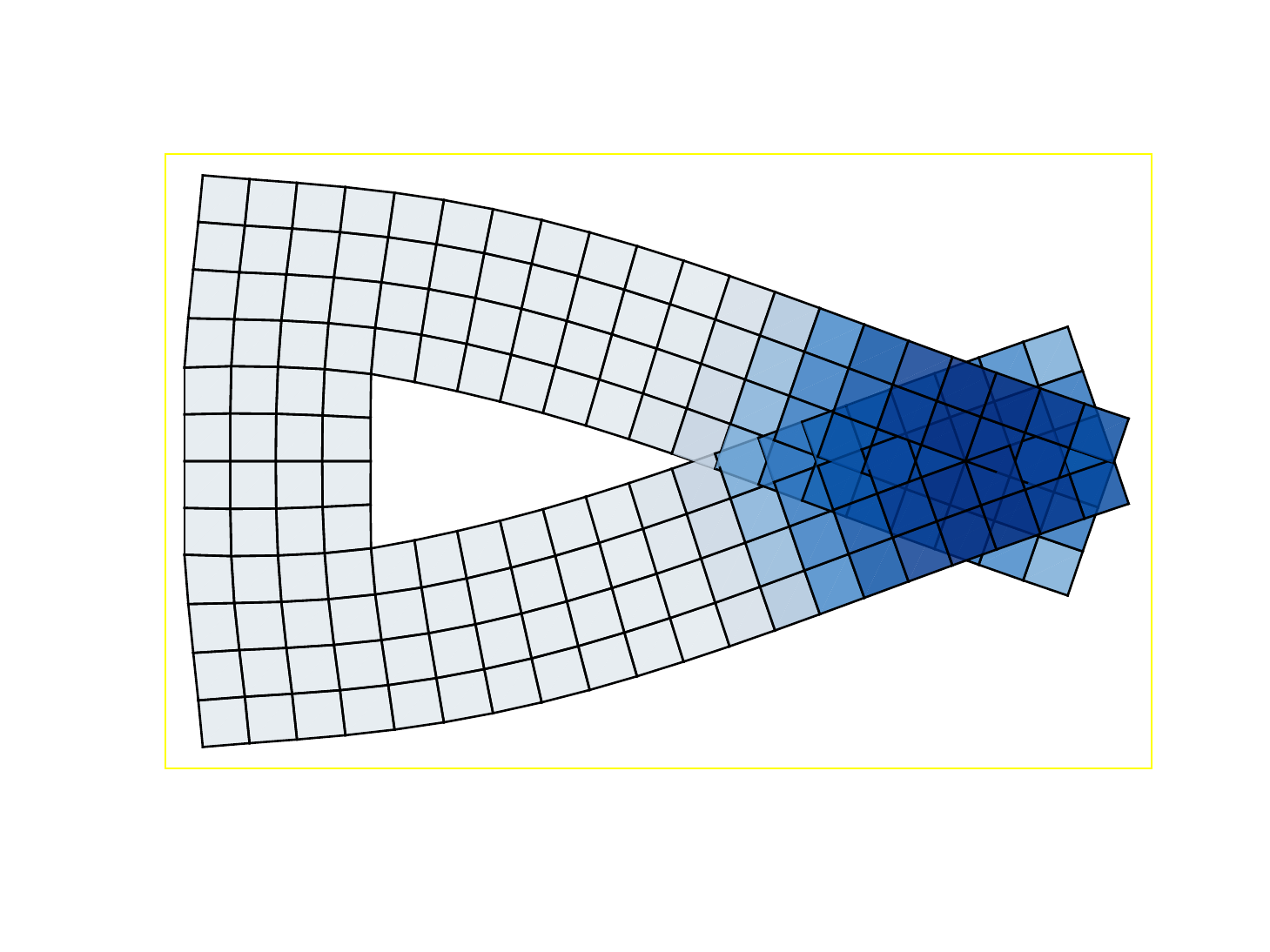} 			
\subcaption{$\mu_2=10^{-3},\epsilon_2=1/2$.}
\includegraphics[width=\textwidth]{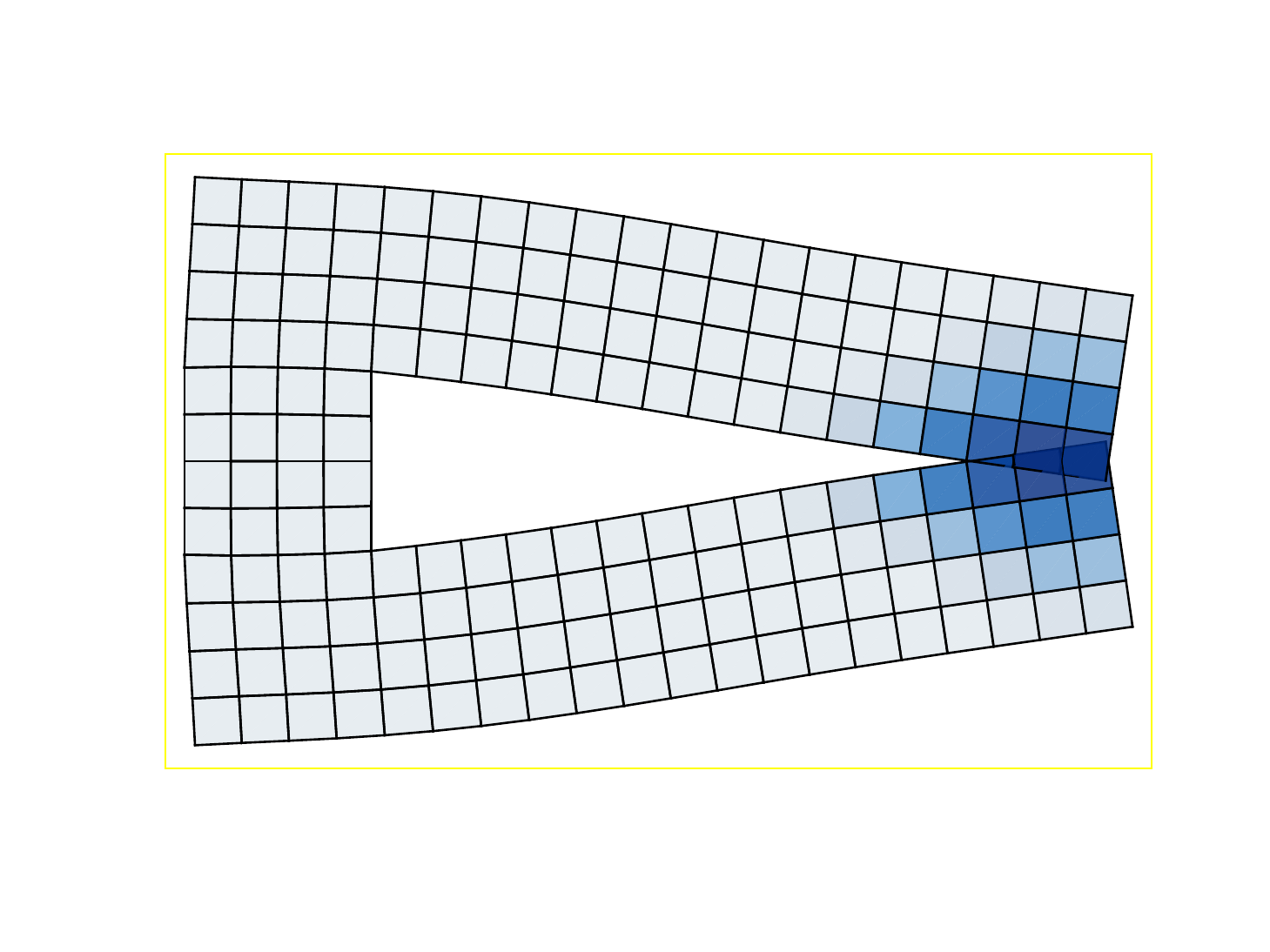} 
\subcaption{$\mu_2=10^{-2},\epsilon_2=1/2$.}
\includegraphics[width=\textwidth]{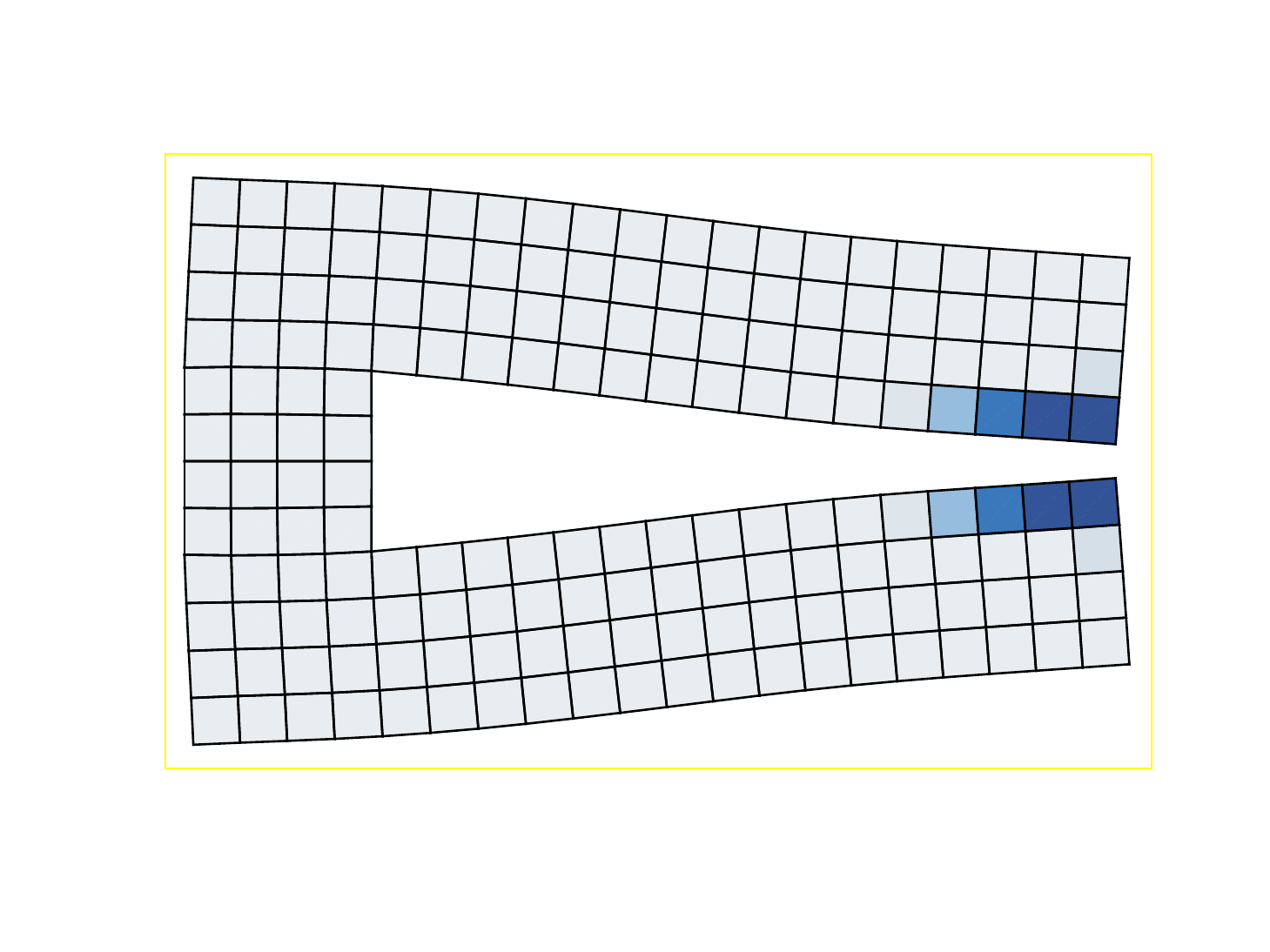} 
\subcaption{$\mu_2=10^{-1},\epsilon_2=1/2$.}
\end{minipage}
\hfill
\begin{minipage}[t]{.45\textwidth}
\centering
\includegraphics[width=\textwidth]{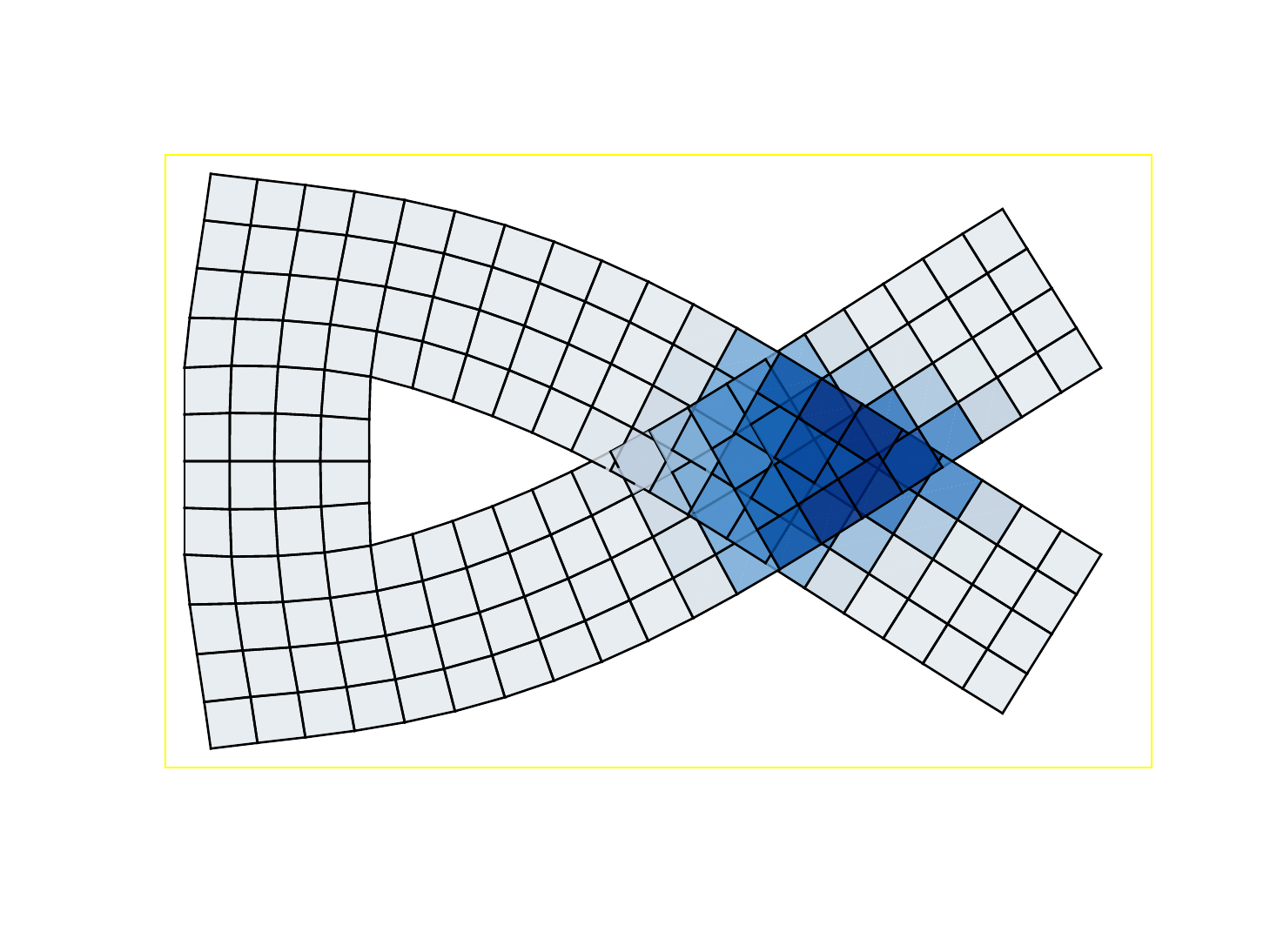} 
\subcaption{$\mu_2=10^{-4}, \epsilon_2=1/4$.}
\includegraphics[width=\textwidth]{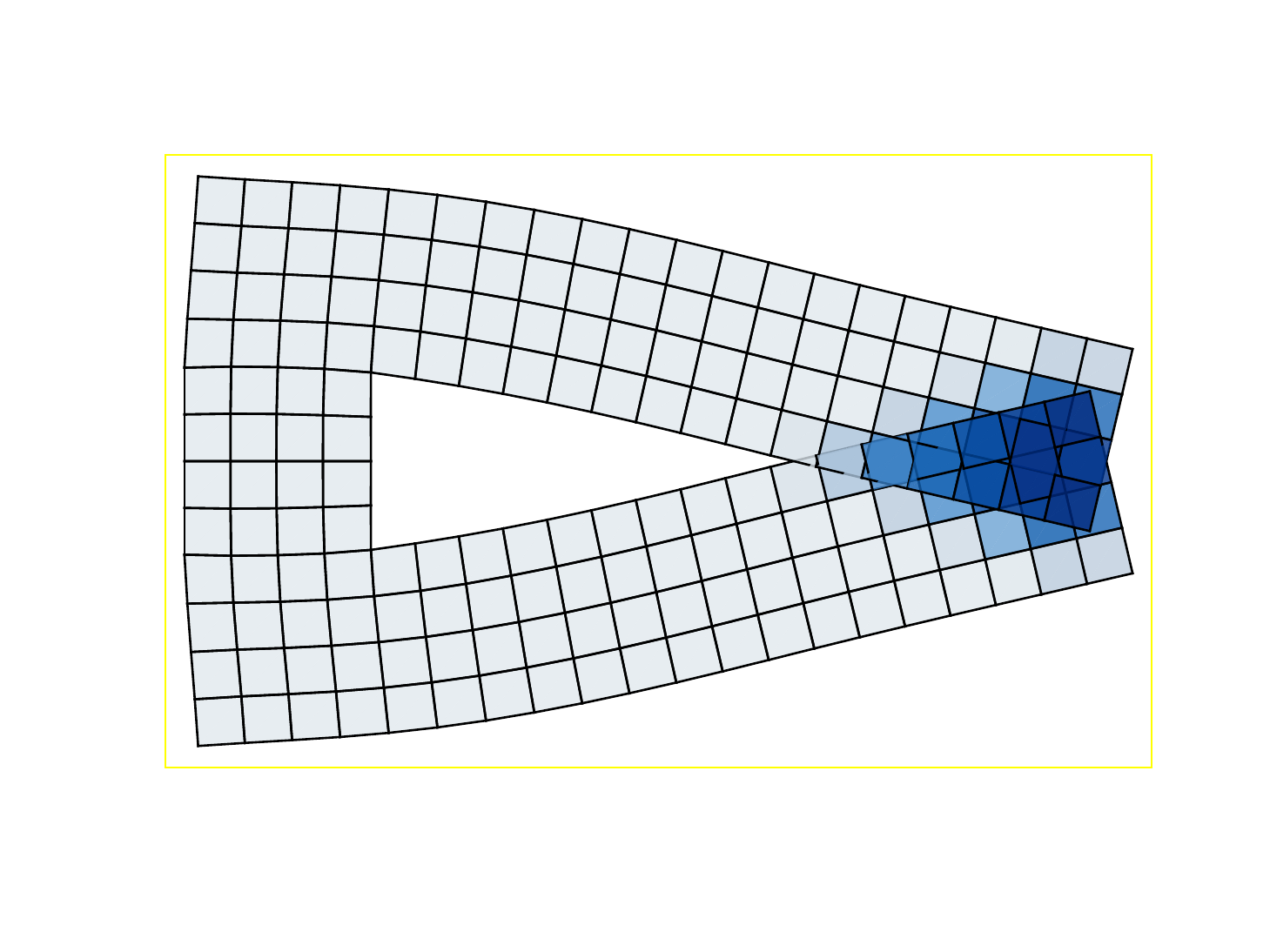} 			
\subcaption{$\mu_2=10^{-3},\epsilon_2=1/4$.}
\includegraphics[width=\textwidth]{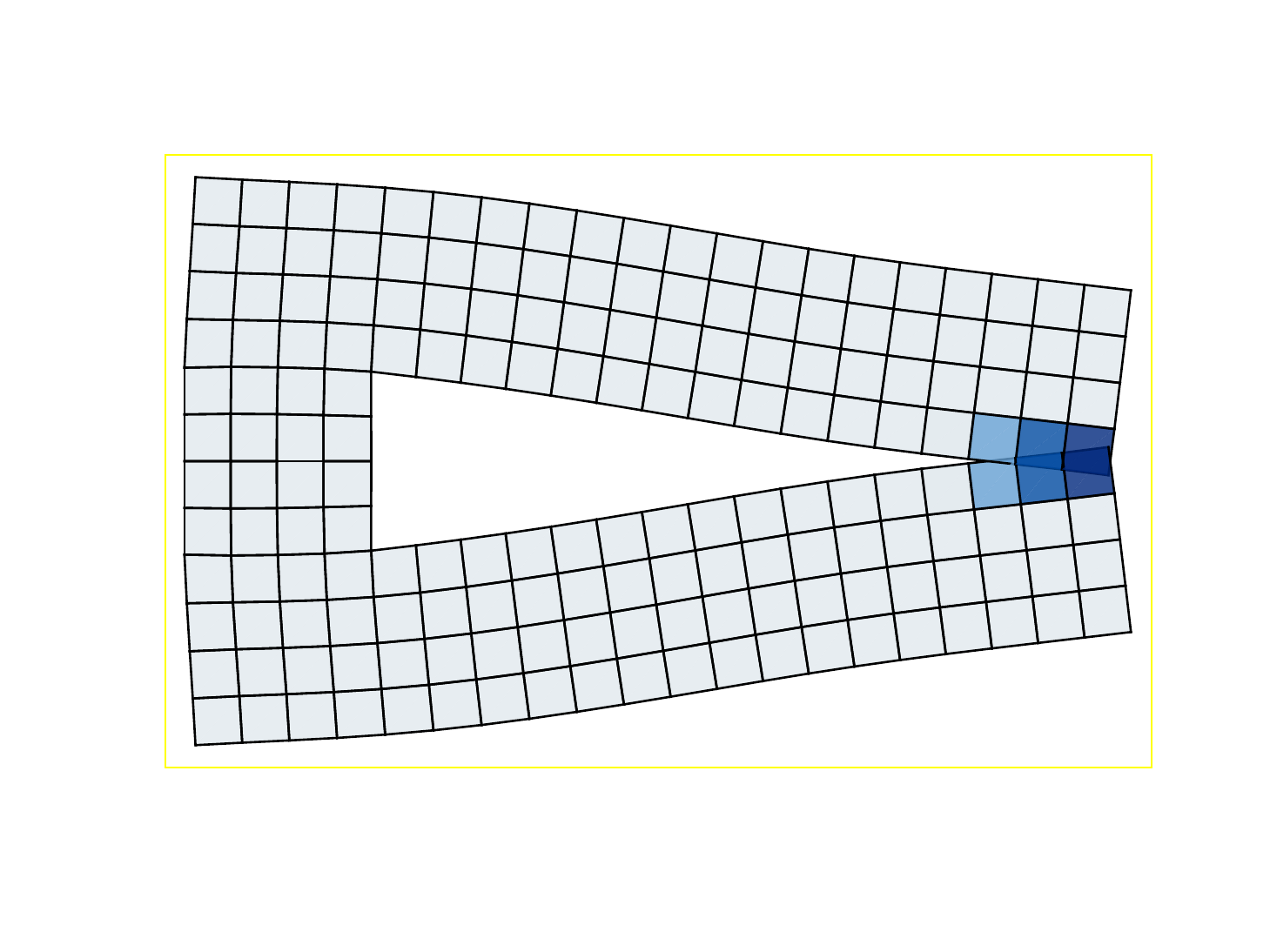} 
\subcaption{$\mu_2=10^{-2},\epsilon_2=1/4$.}
\includegraphics[width=\textwidth]{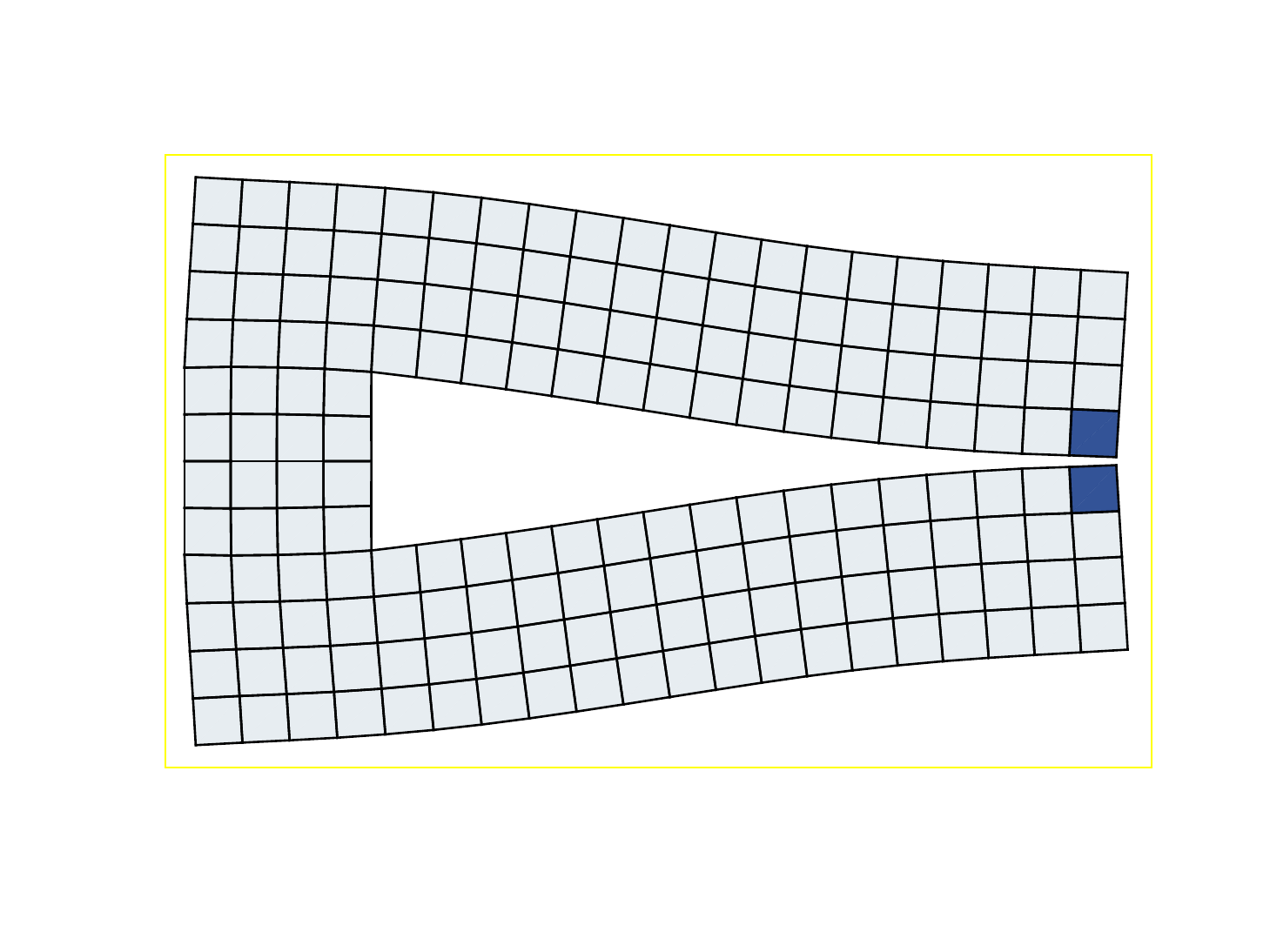} 
\subcaption{$\mu_2=10^{-1},\epsilon_2=1/4$.}
\end{minipage}  
\caption{Model II: Optimized deformed domains with underlying marginal density of $\mu E^{CN}_{\eps_2}(y)$.}
\label{fig:model2_optimization}
\end{figure}

\subsection{Remarks on implementation}

Our Matlab code is based on former codes related to \cite{AnVa15,HaVa14, RaVa13} that allow for a vectorized assembly of finite element matrices. 
The code available for download and includes an own implementation of the Bogner-Fox-Schmit (BFS) rectangular elements for a uniformly refined rectangular mesh, where all rectangular elements are for simplicity of the same size $hx_1 \times hx_2$. Both Model I and Model II rectangular meshes are of this kind. The basis functions on each rectangle are based on bicubic polynomials, i.e. tensor products of 4 cubic (Hermite) polynomials. They have 16 degrees of freedom with 4 degrees in each of its 4 corner nodes approximating: a function value, its gradient and the second mixed derivative. Therefore, a given scalar function 
$u \in C^1(\Omega)$ is represented by a matrix of the size $nn \times 4$ in the form
$$
\underline{\bf{u}}=
\begin{pmatrix}
u(x_1^1, x_2^1) & \frac{\partial u}{\partial x_1}(x_1^1, x_2^1) & \frac{\partial u}{\partial x_2}((x_1^1, x_2^1)& \frac{\partial^2 u}{\partial x_1 \partial x_2}(x_1^1, x_2^1) \\
\dots & \dots & \dots & \dots \\
u(x_1^{nn}, x_2^{nn}) & \frac{\partial u}{\partial x_1}(x_1^{nn}, x_2^{nn}) & \frac{\partial u}{\partial x_2}((x_1^{nn}, x_2^{nn})& \frac{\partial^2 u}{\partial x_1 \partial x_2}(x_1^{nn}, x_2^{nn})
\end{pmatrix},
$$
where $nn$ denotes the total number of mesh nodes and $(x_1^{i}, x_2^{i})$ for $i=1, \dots, nn$ their corresponding coordinates. The construction of BFS elements additionally guarantees $\frac{\partial^2 u}{\partial x_1 \partial x_2} \in C(\Omega)$ but the remaining second-order derivatives are generally discontinuous. Based on a global numbering of nodes, the matrix $\underline{\bf{u}}$ is further reformated as a column vector $\bf{u}$ with $4 \cdot nn$ entries. For our 2d nonlinear elasticity computations, we approximate both components $y_1, y_2$ by the BFS elements and resulting vector variable $y=(y_1, y_2) \in C^1(\Omega;\RR^2)$ has $8 \cdot nn$ entries.


\begin{figure}
    \begin{minipage}[c]{.28\textwidth}
        \centering
        \includegraphics[width=\textwidth]{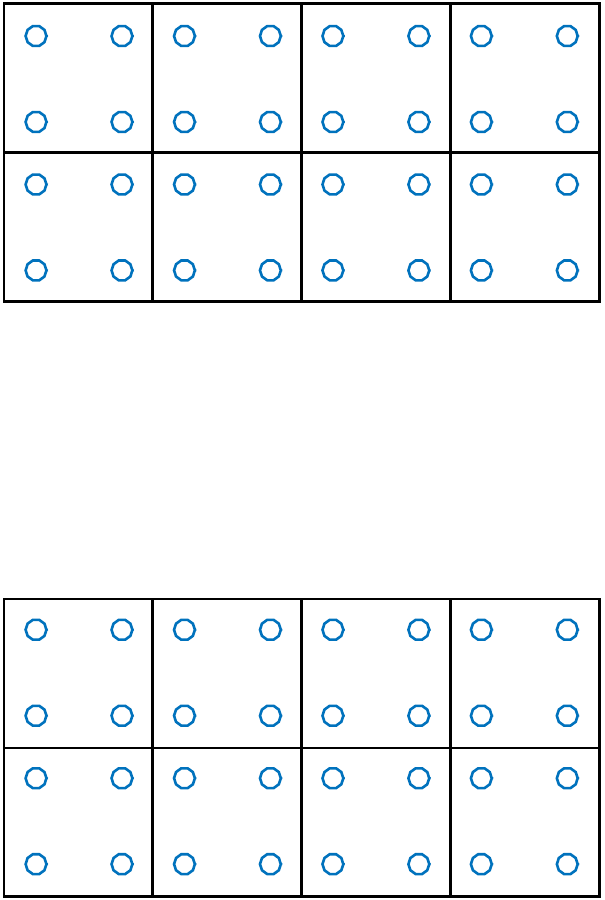}
    \end{minipage}  
		\hspace{0.09\textwidth}
		\begin{minipage}[c]{.28\textwidth}
        \centering
        \includegraphics[width=\textwidth]{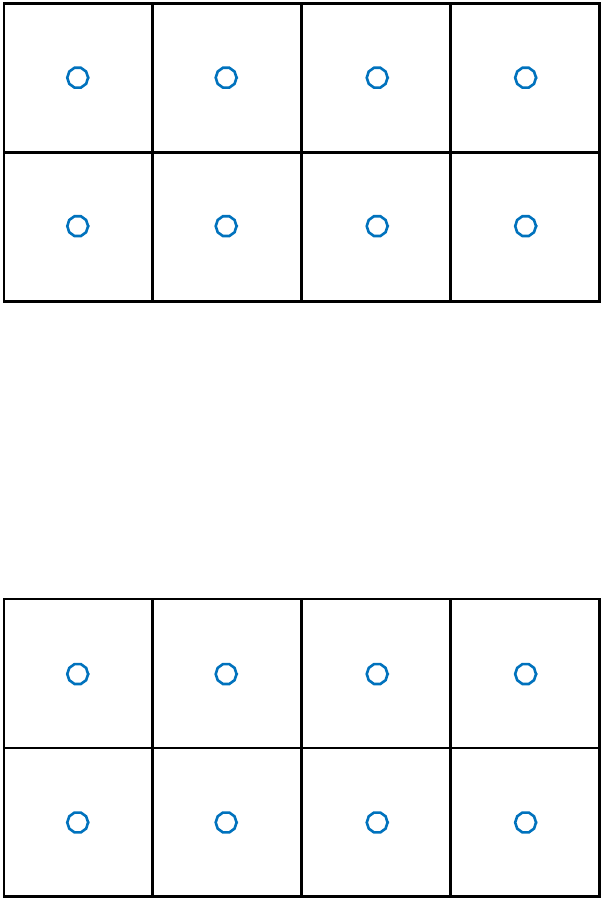}
    \end{minipage}  
		 \caption{Model I : Four Gauss integration points (left) used for evaluation of all energy parts except the penalty term, midpoints (right) used in the evaluation of the penetration penalty term.}
   \label{fig:model1_ips}
\end{figure}

Since energy parts of $E_{\eps,\sigma}$ are generally non-quadratic functionals, all two-dimensional integrals are evaluated using the Gaussian quadrature, where integration points are tensor products of one-dimensional Gauss integration points. We deploy four Gauss quadrature points  in each rectangle as illustrated in Figure \ref{fig:model1_ips} (left). 

The penetration penalty term $E^{CN}_{\eps_2}$ is a nonlocal functional. We make no additional assumptions on the location of the penetration and evaluate all pairwise euclidean distances 
$$|(x_1^{i}, x_2^{i}) -  (x_1^{j}, x_2^{j})  |, \qquad |(y_1^{i}, y_2^{i}) -  (y_1^{j}, y_2^{j})  |$$
in a double loop over $i, j = 1,\dots, ne$. Here, vectors above denote coordinates of rectangles midpoints (cf. the right part of Figure \ref{fig:model1_ips}) and their corresponding deformations and $ne$ the number of mesh rectangles. The x-distances above are precomputed, the y-distances need to be recomputed in every evaluation of the penetration penalty term. 

\begin{rem}[Possible implementation improvement]
Even if no assumptions are made on the location of the penetration,
it is possible (but not implemented here) to optimize the evaluation of $E^{CN}_{\eps_2}$ as follows:
\begin{itemize}
\item Instead of a full double loop, first only go through all pairs of elements located at the boundary of the domain. Create a list of those elements that contribute to $E^{CN}_{\eps_2}$.
\item Then start to search for other contributing elements in the interior by repeatedly checking all elements that are neighbors of those that are already known to give a positive contribution.
\item Stop when no new contributing neighbor elements are found. 
\end{itemize}
While the full double loop requires a number of steps of the order of $h^{-2d}$ for the mesh size $h$, the double loop through the elements at the boundary only needs $h^{-2(d-1)}$. As long as there is no deep penetration (penetration depth of the order of $h$ or less) and $\eps_2=O(h)$ ($\eps_2\geq h$, but not that much bigger), a subsequent search for contributing neighbor elements does not increase that significantly, either.
\end{rem}

\section*{Acknowledgements}
Our research was supported by the Czech Science Foundation (GA \v{C}R), through the grants GA18-03834S (SK and JV) and GA17-04301S (JV).


\bibliographystyle{plain}
\bibliography{NLEbib}

\end{document}